\theoremstyle{plain}
\newtheorem{theorem}{Theorem}
\newtheorem{lemma}{Lemma}
\newtheorem{proposition}{Proposition}
\newtheorem{corollary}{Corollary}
\newtheorem{problem}{Problem}
\theoremstyle{definition}
\newtheorem{definition}{Definition}
\theoremstyle{remark}
\newtheorem{remark}{Remark}
\def\C{\mathbb{C}}
\def\R{\mathbb{R}}
\def\Z{\mathbb{Z}}
\def\tree{\mathcal{T}}
\title{The algebra of \bf {$SL_3(\C)$} conformal blocks}
\author{Christopher Manon}
\thanks{supported by NSF fellowship  DMS-0902710}
\begin{document}

\maketitle

\begin{abstract} 
We construct and study a family of toric degenerations of the Cox ring of the moduli of quasi-parabolic principal $SL_3(\C)$ bundles on a smooth, marked curve $(C, \vec{p}).$  Elements of this algebra have a well known interpretation as conformal blocks, from the Wess-Zumino-Witten model of conformal field theory.  For the genus $0, 1$ cases we find the level of conformal blocks necessary to generate the algebra.  In the genus $0$ case we also find bounds on the degrees of relations required to present the algebra. As a consequence we obtain a toric degeneration for the projective coordinate ring of an effective divisor on the moduli $\mathcal{M}_{C, \vec{p}}(SL_3(\C))$ of quasi-parabolic principal $SL_3(\C)$ bundles on $(C, \vec{p})$.  Along the way we recover positive polyhedral rules for counting conformal blocks. 
\end{abstract}

\smallskip

\tableofcontents

\smallskip

\section{Introduction}

Let $(C, \vec{p})$ be a smooth, complex, projective curve of genus $g$ with $n$ marked points.  In this paper we study the projective coordinate algebras of the moduli stack $\mathcal{M}_{C, \vec{p}}(SL_3(\C))$ of quasi-parabolic principal $SL_3(\C)-$bundles on $(C, \vec{p}).$   In particular, we construct toric degenerations of these algebras from polyhedra designed to compute the dimensions of the spaces $V_{C, \vec{p}}(\vec{\lambda}, L)$ of $SL_3(\C)$ conformal blocks.

For any trivalent graph $\Gamma$ with $n$ leaves and first Betti number (from now on referred to as the $genus$) equal to $g,$
we will define a polytope $CB_{\Gamma}^*,$  see Section \ref{state}.  Additionally we will define for any $n-$tuple $\vec{\lambda}$ of dominant $SL_3(\C)$ weights, and non-negative integer $L,$ a polytope $CB_{\Gamma}^*(\vec{\lambda}, L).$   We prove the following in Section \ref{gensec}.

\begin{theorem}\label{main}
Let $C$ be an $n-$marked curve of genus $g$, and let $\Gamma$ be a trivalent graph
of genus $g$ with $n$ leaves. The Cox ring (total coordinate ring) $V_{C, \vec{p}}(G)$ of $\mathcal{M}_{C, \vec{p}}(SL_3(\C))$ flatly degenerates to the semigroup algebra $\C[CB_{\Gamma}^*]$ associated to $CB_{\Gamma}^*.$

\begin{equation}
V_{C, \vec{p}}(SL_3(\C)) \Rightarrow \C[CB^*_{\Gamma}]\\
\end{equation}

\end{theorem}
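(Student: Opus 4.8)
The plan is to realize the degeneration as the passage from a filtered algebra to its associated graded, where the filtration is dictated by the combinatorics of $\Gamma$. First I would identify the Cox ring with the full algebra of conformal blocks $\mathcal{V}_{C,\vec p} = \bigoplus_{\vec\lambda, L} V_{C,\vec p}(\vec\lambda, L)$, graded by the Picard group of the stack $\mathcal{M}_{C,\vec p}(SL_3(\C))$; the grading records the parabolic weights $\vec\lambda$ at the marked points and the level $L$, and the multiplication is the one induced on sections of the determinant-of-cohomology and parabolic line bundles. This reduces the theorem to exhibiting a flat one-parameter family whose generic fiber is $\mathcal{V}_{C,\vec p}$ and whose special fiber is $\C[CB^*_\Gamma]$.

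Second, I would use the factorization theorem of Tsuchiya--Ueno--Yamada to replace $C$ by a maximally degenerate stable curve $X_\Gamma$ whose dual graph is $\Gamma$: each vertex is a copy of $\mathbb{P}^1$ carrying three special points and each edge is a node. Since the sheaf of conformal blocks is locally free over the moduli of stable $n$-pointed curves, the graded pieces of $\mathcal{V}_{C,\vec p}$ and $\mathcal{V}_{X_\Gamma,\vec p}$ have equal dimensions, so it suffices to degenerate the latter. Factorization then expresses each graded piece as a direct sum, over all admissible labelings $\vec\mu$ of the internal edges of $\Gamma$ by dominant weights at level $L$, of the tensor product $\bigotimes_v V_{\mathbb{P}^1}(\mu_{e_1(v)}, \mu_{e_2(v)}, \mu_{e_3(v)}, L)$ taken over the vertices $v$, where $e_1(v), e_2(v), e_3(v)$ are the edges incident to $v$.

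Third, I would build a multiplicative filtration on $\mathcal{V}_{X_\Gamma,\vec p}$ whose associated graded collapses the direct sum over edge-labelings into a semigroup grading, and which at each vertex implements the toric degeneration of the three-pointed genus-zero block algebra constructed in the earlier sections. Each vertex algebra degenerates to the semigroup algebra of a polytope of $SL_3$ tensor-product-type data truncated at level $L$; assembling these over the vertices and imposing the edge-matching conditions recovers precisely the lattice points of $CB^*_\Gamma$ as defined in Section \ref{state}. The associated graded is then $\C[CB^*_\Gamma]$, and the Rees algebra of the filtration supplies the desired flat family over $\mathbb{A}^1$.

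The hard part will be step three: showing that the local toric degenerations at the vertices are compatible with gluing along the nodes, so that the global filtration is genuinely multiplicative and its associated graded carries exactly the semigroup structure of $CB^*_\Gamma$ with no spurious relations. Equivalently, one must verify that the chosen valuation has value semigroup equal to the full set of lattice points of $CB^*_\Gamma$ and is finitely generated; flatness of the family then follows formally, since the Hilbert function of the special fiber agrees with that of $\mathcal{V}_{C,\vec p}$ by the polyhedral count of conformal blocks recovered in the paper.
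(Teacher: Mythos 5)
Your proposal follows essentially the same route as the paper: the paper's proof is exactly the composition of the factorization-filtration degeneration $V_{C,\vec{p}}(G) \Rightarrow [\bigotimes_{v\in V(\Gamma)} V_{0,3}(G)]^{T^{\Gamma}}$ (Theorem \ref{m4}, imported from \cite{M4}) with the valuation $v_{\theta}$ degenerating each trinode algebra $V_{0,3}(SL_3(\C))$ to $\C[CB_3^*]$. The ``hard part'' you flag -- compatibility of the local degenerations with the gluing along nodes -- is precisely what the paper dispatches by observing that the local degenerations are $T^{\Gamma}$-invariant, so that taking associated graded commutes with passing to the invariant subalgebra and one gets $\C[CB_{\Gamma}^*] = (\bigotimes_{v \in V(\Gamma)} \C[CB_3^*])^{T^{\Gamma}}$ on the nose.
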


\noindent
As a consequence, we obtain a toric degeneration of each projective coordinate ring coming from an effective line bundle on $\mathcal{M}_{C, \vec{p}}(SL_3(\C)).$ 

\begin{corollary}\label{mainweights}
Let $C, \vec{p}, \Gamma$ be as above, and let $\mathcal{L}(\vec{\lambda}, L)$ be the effective line bundle on $\mathcal{M}_{C, \vec{p}}(SL_3(\C))$ corresponding to the 
$n$-tuple $\vec{\lambda}$ of dominant $SL_3(\C)$ weights and the non-negative integer $L.$ The projective coordinate algebra $R_{C, \vec{p}}(\vec{\lambda}, L)$ corresponding to $\mathcal{L}(\vec{\lambda}, L)$ flatly degenerates to the semigroup algebra $\C[CB_{\Gamma}^*(\vec{\lambda}, L)].$

\begin{equation}
R_{C, \vec{p}}(\vec{\lambda}, L) \Rightarrow \C[CB_{\Gamma}^*(\vec{\lambda}, L)]\\
\end{equation}

\end{corollary}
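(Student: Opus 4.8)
The plan is to deduce the corollary from Theorem~\ref{main} by showing that the flat degeneration of the Cox ring respects the grading by line bundle classes, and then extracting the graded summand singled out by $\mathcal{L}(\vec{\lambda}, L)$.

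First I would recall the description of $Cox(\mathcal{M}_{C, \vec{p}}(SL_3(\C)))$ as the multigraded algebra whose homogeneous component of degree $(\vec{\mu}, M)$ is the space of conformal blocks $V_{C, \vec{p}}(\vec{\mu}, M)$, the grading being by the divisor class group and supported on the monoid of effective line bundle classes. In this language the projective coordinate algebra is
\[
R_{C, \vec{p}}(\vec{\lambda}, L) = \bigoplus_{k \geq 0} V_{C, \vec{p}}(k\vec{\lambda}, kL),
\]
i.e.\ the subalgebra obtained by keeping only the components lying on the ray through $(\vec{\lambda}, L)$.

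The key step is to check that the one-parameter flat family of Theorem~\ref{main} is homogeneous for this grading. The degeneration is produced by a filtration (equivalently a valuation indexed by the internal edges of $\Gamma$) that does not alter the line bundle degree of a conformal block, so the associated graded algebra $\C[CB_{\Gamma}^*]$ carries the same grading by effective line bundle classes. Concretely, the ambient lattice of $CB_{\Gamma}^*$ projects onto the monoid of line bundle classes via the leaf-edge weights together with the level coordinate, and $CB_{\Gamma}^*(\vec{\lambda}, L)$ is by construction the slice of the cone over $CB_{\Gamma}^*$ sitting above $(\vec{\lambda}, L)$. Hence the $(\vec{\lambda}, L)$-homogeneous subalgebra of $\C[CB_{\Gamma}^*]$ is exactly the semigroup algebra $\C[CB_{\Gamma}^*(\vec{\lambda}, L)]$ on the lattice points of the dilates of this slice.

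It then remains to transport flatness through this extraction. Over the affine base $\C[t]$ of the degeneration, flatness is equivalent to torsion-freeness; each homogeneous component of the flat $\C[t]$-algebra of Theorem~\ref{main} is a torsion-free $\C[t]$-submodule, hence flat, and the $(\vec{\lambda}, L)$-graded summand gives the flat family $R_{C, \vec{p}}(\vec{\lambda}, L) \Rightarrow \C[CB_{\Gamma}^*(\vec{\lambda}, L)]$. The main obstacle I anticipate is the bookkeeping in the middle step: one must verify that the projection from the lattice of $CB_{\Gamma}^*$ to the line-bundle monoid really sends the slice defining $CB_{\Gamma}^*(\vec{\lambda}, L)$ to the point $(\vec{\lambda}, L)$, and that the semigroup of lattice points on the slice coincides with the honest associated graded of the filtered coordinate ring $R_{C, \vec{p}}(\vec{\lambda}, L)$ rather than merely a saturation of it.
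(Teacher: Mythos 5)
Your proposal is correct and follows essentially the same route as the paper: the paper derives Corollary \ref{mainweights} directly from Theorem \ref{main} by observing that all of its degenerations (the factorization degeneration of Theorem \ref{m4} and the $T^{\Gamma}$-invariant, level-preserving $v_{\theta}$-degenerations at the trinodes) are homogeneous for the $Pic$-multigrading, so the flat family restricts to the Veronese-type subalgebra $\bigoplus_{N} V_{C,\vec{p}}(N\vec{\lambda}, NL)$, whose associated graded is the slice algebra $\C[CB_{\Gamma}^*(\vec{\lambda}, L)]$. Your closing worry about saturation resolves itself exactly as you suspect: since $CB_{\Gamma}^*$ is the full (saturated) lattice-point semigroup of a fiber product of cones, the fiber of the $N$-th dilate over $(N\vec{\lambda}, NL)$ equals the $N$-th dilate of the fiber, so the graded summand of $\C[CB_{\Gamma}^*]$ is the honest associated graded and no saturation discrepancy can occur.
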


We then obtain deeper structural properties of these algebras from an analysis of the polytope $CB_{\Gamma}^*,$  
in Section \ref{gensec} we prove the following. 

\begin{theorem}\label{gor}
The algebra $V_{C, \vec{p}}(SL_3(\C))$ is Gorenstein.
\end{theorem}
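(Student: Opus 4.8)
The plan is to transport the entire question to the combinatorics of the polytope $CB^*_\Gamma$ by means of the flat degeneration of Theorem \ref{main}. The Gorenstein property behaves well along flat families: the locus in the base over which the fibers are Gorenstein is open, so in the one-parameter flat family of Theorem \ref{main} the Gorenstein-ness of the special fiber $\C[CB^*_\Gamma]$ forces the Gorenstein-ness of the generic fiber, which is the original algebra $V_{C, \vec{p}}(SL_3(\C)) = Cox(\mathcal{M}_{C, \vec{p}}(SL_3(\C)))$. Thus it suffices to prove that the affine toric variety $\mathrm{Spec}\,\C[CB^*_\Gamma]$ is Gorenstein, a purely convex-geometric assertion about $CB^*_\Gamma$.

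First I would check that the semigroup $CB^*_\Gamma$ is saturated, which should be immediate from its description as the lattice points of a rational polyhedral cone; by Hochster's theorem the associated normal semigroup algebra is then automatically Cohen--Macaulay. I would next invoke the standard Gorenstein criterion for a normal affine semigroup algebra $\C[S]$ with $S = C \cap L$, where $C$ is the cone spanned by $S$ and $u_1, \ldots, u_k$ are its primitive facet normals: the canonical module is the ideal spanned by the relative-interior lattice points $\{ m \in L : \langle u_i, m \rangle \geq 1 \ \forall i \}$, and $\C[S]$ is Gorenstein exactly when this module is principal, i.e. when there is a lattice point $c \in L$ with $\langle u_i, c \rangle = 1$ for every $i$. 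Equivalently, $CB^*_\Gamma$ must be a Gorenstein polytope. The task therefore reduces to exhibiting such a distinguished interior lattice point $c$.

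Here I would use the graph structure of $\Gamma$. The polytope $CB^*_\Gamma$ is built by gluing one local piece for each trivalent vertex $v$ of $\Gamma$, namely the genus-$0$, three-point $SL_3(\C)$ conformal-block (Berenstein--Zelevinsky) polytope, along the coordinates attached to the internal edges. Correspondingly, the facets of $\mathrm{Cone}(CB^*_\Gamma)$ split into the local facets at the vertices together with the edge-matching and level inequalities, and the search for $c$ decomposes accordingly. I would produce the local Gorenstein point at each vertex --- for the three-point $SL_3(\C)$ polytope the natural candidate records the weight $\rho$ at each leg together with the minimal compatible level, reflecting $h^{\vee} = 3$ --- and then check that these local points agree on the shared internal-edge coordinates, so that they patch to a single lattice point $c$ of $CB^*_\Gamma$ pairing to $1$ against every facet normal.

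The main obstacle is this final verification. Unlike the multiplicity-free $SL_2(\C)$ case, the local vertex polytope for $SL_3(\C)$ already carries a nontrivial facet structure, so one must control the complete facet description of $CB^*_\Gamma$ and confirm that the candidate $c$ meets \emph{every} facet normal with value exactly $1$, including the facets arising from the internal-edge identifications and from the level constraint. In particular, primitivity of the facet normals must be checked so that lattice distance $1$ is the correct normalization, and one must rule out that the gluing of the local interior points produces a new facet at distance different from $1$. Once the local computation for the three-point $SL_3(\C)$ polytope is carried out and shown to be compatible across the internal edges, the unique interior point $c$ is obtained, the Gorenstein criterion applies to $\C[CB^*_\Gamma]$, and the flat degeneration of Theorem \ref{main} completes the proof.
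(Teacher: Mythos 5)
Your reduction steps are sound and essentially match the paper's strategy: transfer Gorensteinness across the flat degeneration of Theorem \ref{main} (your openness-of-the-Gorenstein-locus argument is a legitimate substitute for the paper's appeal to Stanley's Hilbert-function criterion, \cite{BH} Cor.\ 4.4.6, which works because Hilbert functions are preserved under flat degeneration), and then invoke the standard criterion that a normal affine semigroup algebra $\C[S_P]$ is Gorenstein if and only if the interior lattice points of the cone form $\omega + S_P$ for a single lattice point $\omega$ (\cite{BH} Cor.\ 6.3.8). The genuine gap is that the last and hardest step is never carried out: you do not produce the distinguished lattice point, and you explicitly defer what you call ``the main obstacle'' --- obtaining the complete facet description of $CB_{\Gamma}^*$, checking primitivity of the facet normals, and verifying that a glued candidate pairs to $1$ against every one of them. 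That verification \emph{is} the theorem, and the facet-by-facet route you propose is exactly what one wants to avoid for a polytope defined by fiber products over a graph. Worse, the candidate you suggest would fail the check: the canonical generator of $CB_3^*$ is $\omega_3 = XSTP_{12}P_{23}P_{31}$, which lies in the multigraded component with weight $(2,2) = 2\rho$ on each leg and level $6 = 2h^{\vee}$, not weight $\rho$ with the minimal compatible level; accordingly $\omega_{\Gamma}$ is the component $V_{C,\vec{p}}((2,2),\ldots,(2,2),6)$ and the $a$-invariant is $6$.

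The idea you are missing is the paper's Lemma \ref{intersect}, which makes all facet computations unnecessary. It says that if $\C[P]$ is Gorenstein with canonical generator $\omega$, and $Q = P \cap \mathcal{H}$ is a linear section such that $\omega$ still lies in the appropriate dilate of $Q$, then $\C[Q]$ is Gorenstein with the \emph{same} generator $\omega$: every proper facet of $Q$ arises as a facet of $P$ intersected with $\mathcal{H}$, so interior points of $P$ lying on $\mathcal{H}$ are interior in $Q$, and the divisibility statement $int(P) \cap S_P = \omega + S_P$ simply restricts to $\mathcal{H}$. One then only needs two easy inputs: $BZ_3^*$ is a hyperplane section of a dilated unit simplex in $\R^9$, whose canonical point $(1,\ldots,1)$ is precisely $\omega_3$; and products of Gorenstein polytopes with generators in the same degree are Gorenstein. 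Since $CB_{\Gamma}^*$ is itself a linear section of the product $(BZ_3^*)^{V(\Gamma)}$ through the point $\prod_v \omega_3$, the Gorenstein property propagates with no mention of facet normals, primitivity, or new facets created by gluing --- the issues your outline correctly identifies as obstacles but leaves unresolved.
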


The Picard group of $\mathcal{M}_{C, \vec{p}}(G),$ calculated in \cite{LS}, is a product of $n$ copies of the character group of $B$ with a copy of $\Z.$

\begin{equation}
Pic(\mathcal{M}_{C, \vec{p}}(G)) = \mathcal{X}(B)^n \times \Z\\
\end{equation}

\noindent
The cone of effective line bundles is a subcone of  $\Delta^n \times \Z_{\geq 0} \subset \mathcal{X}(B)^n\times \Z,$ where $\Delta$ is the Weyl chamber of $G$ associated to $B.$   We let $\Delta_L$ denote the alcove of level $L$, consisting of the dominant weights $\lambda$ with $\lambda(H_{\theta}) \leq L$, where $H_{\theta}$ is the coroot corresponding to the longest root of $\mathfrak{g}.$  A result of  Kumar, Narasimhan, and Ramanathan \cite{KNR}, Beauville and Laszlo \cite{BL}, and also Faltings \cite{Fal}, establishes that when $\lambda_i \in \Delta_L$, the space of global sections $H^0(\mathcal{M}_{C, \vec{p}}(G), \mathcal{L}(\vec{\lambda}, L))$ can be identified with the vector space $V_{C, \vec{p}}(\vec{\lambda}, L)$ of conformal blocks on the curve $(C, \vec{p})$.    It follows that the Cox ring of $\mathcal{M}_{C, \vec{p}}(G)$ is the sum of all spaces of conformal blocks for the Lie algebra $\mathfrak{g}$ on the curve $(C, \vec{p}).$   In Sections \ref{g=0} and \ref{g=1} we give bounds on the levels of conformal blocks needed to present the algebra $V_{C, \vec{p}}(SL_3(\C))$ in the $g=0$ and $g=1$ cases. 

\begin{theorem}\label{g0}
For generic $(\mathbb{P}^1, \vec{p}) \in \mathcal{M}_{0, n},$
The algebra  $V_{\mathbb{P}^1, \vec{p}}(SL_3(\C))$ 
is generated by conformal blocks of level $1,$ and is presented by a homogenous ideal generated by forms of degree $2$ and $3.$
\end{theorem}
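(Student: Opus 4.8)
The plan is to transport the entire question to the combinatorial side via the toric degeneration supplied by Theorem \ref{main} and Corollary \ref{mainweights}. Since flat degeneration to a semigroup algebra preserves the Hilbert function and can only raise the degrees of generators and relations, it suffices to prove the corresponding statements for $\C[CB_\Gamma^*]$ in the genus $0$ case, where $\Gamma$ is a trivalent tree with $n$ leaves. A generating set (resp. a bound on relation degrees) for the semigroup algebra pulls back to a generating set (resp. relation bound) for $V_{\mathbb{P}^1, \vec{p}}(SL_3(\C))$ itself, because if the associated graded algebra is generated in degrees $\le d$ then so is the original filtered algebra, and likewise for relations. So the real content is a lattice-point statement about the polytope $CB_\Gamma^*$ and its semigroup.

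First I would reduce from an arbitrary trivalent tree to the single vertex. The polytope $CB_\Gamma^*$ is assembled from the building blocks $CB^*$ attached to each trivalent vertex (the three-pointed genus $0$ piece $V_{0,3}(SL_3(\C))$), glued along the internal edges of $\Gamma$ by matching the weight data. This is exactly the factorization/fusion structure emphasized in the introduction: the semigroup $CB_\Gamma^*$ is a fiber product of copies of the vertex semigroup over the edge data. I would therefore first establish generation in level $1$ and the degree $2,3$ relation bound for the vertex polytope, and then argue that gluing trees of such vertices preserves these properties, using that for a tree the gluing maps are along ``free'' edges in the sense that the fiber-product semigroup is normal/saturated and the generators of the pieces combine without introducing new higher-degree generators. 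The treelike (Betti number $0$) hypothesis is what makes this gluing clean, mirroring the way Buczynska--Wiesniewski \cite{BW} obtain degree $1$ generation and quadratic relations for trees in the $SL_2$ case.

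The heart of the argument is the explicit analysis of the vertex semigroup coming from $V_{0,3}(SL_3(\C))$, via the polyhedral counting rules of Walton, Mathieu, Senechal, and Kirillov \cite{KMSW} that Section \ref{cb3} makes precise. Concretely I would write down the inequalities cutting out $CB^*$, identify its lattice points of level $1$, and show that every lattice point of the cone is a nonnegative integer combination of these, i.e. the semigroup is generated in level $1$. For the relations, I would compute a Gr\"obner basis (or equivalently analyze the toric ideal via the kernel of the monomial map) and show all binomial relations among the level-$1$ generators are generated in degrees $2$ and $3$; the appearance of degree $3$ (rather than purely quadratic, as in $SL_2$) reflects the genuinely multiplicity-bearing, non--multiplicity-free structure of $SL_3$ triple-tensor invariants. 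Establishing this explicit toric presentation for the vertex --- and verifying that no relations of degree $\ge 4$ are needed --- is the main obstacle; once it is in hand, the tree-gluing step and the degeneration transport are comparatively formal.
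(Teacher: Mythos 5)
Your overall route --- degenerate via Theorem \ref{main}, reduce to the tree semigroup $CB_{\tree}^*$, analyze the vertex piece, then glue --- is the same as the paper's, and your transport step (degree bounds for the associated graded algebra imply degree bounds for the original algebra, combined with semicontinuity in flat families) matches the paper's argument. However, you have inverted where the work lies, and the step you dismiss as ``comparatively formal'' is exactly where your proposal has a gap. The vertex analysis that you call the main obstacle is essentially already trivial: $\C[CB_3^*]$ is a hypersurface, generated by the nine level-one elements $X, S, T, P_{ij}$ subject to the single cubic binomial $P_{12}P_{23}P_{31} - P_{21}P_{32}P_{13}$, and the paper establishes this in Sections \ref{com}--\ref{cb3} before the genus-$0$ argument even begins. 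What actually requires proof is that the fiber product over a trivalent tree preserves level-one generation and produces relations only in degrees $2$ and $3$. Your justification --- that the fiber-product semigroup is ``normal/saturated'' and that generators ``combine without introducing new higher-degree generators'' because the graph is a tree --- asserts the conclusion rather than proving it. Normality of $CB_{\tree}^*$ is itself one of the things to be proved, and neither normality nor the tree hypothesis by itself yields degree bounds: toric fiber products do not in general preserve generation degrees, and the analogous statement genuinely fails once the graph has a loop (this is the content of Theorem \ref{g1}, where genus $1$ forces generators of levels $1$, $2$, and $3$).

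The missing mechanism, which carries the paper's entire genus-$0$ proof, is the observation that the gluing coordinates $\partial_1, \partial_2, \partial_3$ take only the values $(0,0)$, $(1,0)$, $(0,1)$ on lattice points of $CB_3^*$. This restricted-boundary-values property is what makes the factorization-extension argument of \cite{MZ} (Theorem $1.2$) work: for a fiber product $S \times_{\Z_{\geq 0}^2} CB_3^*$, any factorization of the image $w \in S$ of an element $w'$ extends to a factorization of $w'$ itself. Iterating this over the internal edges of $\tree$ yields normality, level-one generation (made explicit by the paper's description of the degree-one elements as complete weightings on the components of proper forests, Lemma \ref{forest}), and the bound of degrees $2$ and $3$ on the presenting ideal. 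Without isolating this property and proving the extension lemma, your gluing step does not go through, so the proposal is incomplete precisely at its load-bearing point.
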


\noindent
In \cite{M4}, it was shown that $V_{\mathbb{P}, \vec{p}}(SL_3(\C))$ is a sub-algebra of a polynomial ring over the algebra of $SL_3(\C)-$invariant $n-$tensors, $R_n(sl_3(\C))\otimes\C[X]$ (see \cite{M4} for the definition of this object). With this in mind, we can combine this theorem with Proposition $3.5.1$ of \cite{U},  which characterizes conformal blocks in the genus $0$ case. 

\begin{corollary}
For generic $p_i,$ the algebra  $V_{\mathbb{P}^1, \vec{p}}(SL_3(\C))$ is generated by the tensors $f\otimes X \in [V(\lambda_1^*)\otimes \ldots \otimes V(\lambda_n^*)]^{sl_3(\C)}\otimes \C\{X\},$ with $\lambda_i $ a fundamental weight, which satisfy the following condition.  For any element $\phi_k = v_1 \otimes \ldots v_{k-1} \otimes b_{\lambda_k} \otimes v_{k+1} \ldots \otimes v_n \in V(\lambda_1)\otimes \ldots \otimes V(\lambda_n),$ where $b_{\lambda_k}$ is a highest weight vector, we have the following equalities. 

\begin{equation}
\sum_{\vec{m}_k, |\vec{m}_k| = \ell_k} \prod_{j \neq k} \binom{\ell_k}{m_j} (p_j - p_k)^{-m_j} <f | \prod_{j \neq k} [\rho_j(e_{\theta})]^{m_j}| \phi_k> = 0\\
\end{equation}
\noindent
where $\ell_k =  2 - \lambda_k(\theta),$  $\vec{m}_k = (m_1, \ldots, m_{k-1}, m_{k+1}, \ldots, m_n) \in \{0, 1, 2\}^{n-1}$, $|\vec{m}_k| = \sum_{j \neq k} m_j,$  and $e_{\theta} \in \mathfrak{sl_3(\C)}$ is the raising operator for the longest root $\theta = L_1 - L_3.$ The symbol $\rho_j$
indicates that the element is to act on the $j-$th component of the tensor $\phi_k.$
\end{corollary}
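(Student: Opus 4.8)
The plan is to obtain the statement as a direct specialization, combining the generation result of Theorem \ref{g0} with the genus-$0$ characterization of conformal blocks recorded in Proposition $3.5.1$ of \cite{U}. First I would pin down the generators. By Theorem \ref{g0} the algebra $V_{\mathbb{P}^1, \vec{p}}(SL_3(\C))$ is generated in level $1$, so it suffices to describe the conformal blocks $f \otimes X$ whose weights $\vec{\lambda}$ all have level at most $1$. For $SL_3(\C)$ the dominant weights $\lambda$ satisfying the integrability bound $\lambda(\theta) \leq 1$ are exactly $(0,0)$, $(1,0)$, and $(0,1)$, which accounts for the restriction on each $\lambda_i$. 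Using the embedding $V_{\mathbb{P}^1, \vec{p}}(SL_3(\C)) \subset R_n(SL_3(\C)) \otimes \C[X]$ from \cite{M4} together with the fact (recalled above) that a genus-$0$ conformal block is in particular an $\mathfrak{sl}_3(\C)$-invariant tensor, each such generator is realized as $f \otimes X$ with $f \in [V(\lambda_1^*) \otimes \cdots \otimes V(\lambda_n^*)]^{\mathfrak{sl}_3(\C)}$.

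The remaining content is to identify which invariant tensors $f$ actually extend to conformal blocks, and here I would quote Ueno's proposition. A genus-$0$ conformal block is an invariant tensor annihilated by the image of $\mathcal{O}(\mathbb{P}^1 \setminus \vec{p}) \otimes \mathfrak{g}$ under the maps $\eta_i$. On $\mathbb{P}^1$ the ring $\mathcal{O}(\mathbb{P}^1 \setminus \vec{p})$ is spanned by the constants and the partial fractions $(z - p_j)^{-1}$; invariance under the constant part $1 \otimes \mathfrak{g}$ is precisely the statement that $f$ is $\mathfrak{sl}_3(\C)$-invariant, so the genuinely new conditions come from the elements $(z - p_j)^{-1} \otimes e_\theta$ attached to the highest root vector. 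Laurent-expanding $(z - p_j)^{-1}$ about each marked point $p_k$ produces the geometric series with coefficients $(p_j - p_k)^{-m-1}$, and the singular (null) vector relation in the level-$1$ module $\mathcal{H}(\lambda_k, 1)$ truncates the relevant expansion at order $\ell_k = 2 - \lambda_k(\theta)$. Pairing the resulting expression against $f$ and the vector $\phi_k$ carrying a highest weight vector in the $k$-th slot then yields exactly the stated equation, the binomial factors and the exponent $\ell_k$ being the combinatorial residue of this expansion and truncation.

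The main obstacle, and the part that genuinely rests on \cite{U}, is the reduction of the full gauge condition to the finitely many equations built from $e_\theta$ at the vectors $\phi_k$: one must argue that annihilation by $(z-p_j)^{-1} \otimes X$ for all $X \in \mathfrak{g}$ and all test vectors follows, given $\mathfrak{sl}_3(\C)$-invariance of $f$, from the displayed conditions for the single raising operator $e_\theta$ evaluated at highest weight vectors. This is where the level-$1$ integrability structure of the affine modules is used, and where the exponent $\ell_k = 2 - \lambda_k(\theta)$ and the constraint $\vec{m}_k \in \{0,1,2\}^{n-1}$ enter. Once this reduction is granted, the corollary is a direct transcription of Proposition $3.5.1$ of \cite{U} into the level-$1$ generating set supplied by Theorem \ref{g0}.
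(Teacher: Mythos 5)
Your proposal is correct and follows essentially the same route as the paper: the paper likewise obtains the corollary by combining the level-$1$ generation statement of Theorem \ref{g0} with the embedding $V_{\mathbb{P}^1, \vec{p}}(SL_3(\C)) \subset R_n(SL_3(\C)) \otimes \C[X]$ from \cite{M4}, and then quoting Proposition $3.5.1$ of \cite{U} specialized to $\mathfrak{g} = sl_3(\C)$ and $L = 1$. Your additional sketch of where Ueno's equations come from (partial fractions, null-vector truncation at $\ell_k = 2 - \lambda_k(\theta)$) is extra exposition beyond what the paper records, and you correctly defer the substance of that reduction to \cite{U} rather than re-proving it.
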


\noindent
Here we have specialized Proposition $3.5.1$ of \cite{U} to the case $\mathfrak{g} = sl_3(\C),$ with $L = 1.$  
We also combine Theorem \ref{g0} with an analysis of a special semigroup in Section \ref{g=1} to obtain a generating set in the genus $1$ case. 

\begin{theorem}\label{g1}
For generic $(C, \vec{p}) \in \mathcal{M}_{1, n},$
The algebra  $V_{C, \vec{p}}(SL_3(\C))$ 
is generated by conformal blocks of levels $1,2,$ and  $3.$ 
\end{theorem}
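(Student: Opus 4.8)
The plan is to follow the same strategy that underlies Theorems \ref{main}, \ref{mainweights}, and \ref{g0}: reduce the generation question for $V_{C, \vec{p}}(SL_3(\C))$ in genus $1$ to a purely combinatorial question about the toric degeneration $\C[CB^*_\Gamma]$, and then settle that combinatorial question by analyzing the semigroup $CB^*_\Gamma$ for a genus $1$ trivalent graph $\Gamma$. Since flat degeneration preserves the minimal degree of generators (a generating set of the semigroup algebra lifts to a generating set of the original algebra in the same multidegrees), it suffices to show that the semigroup $CB^*_\Gamma$ is generated by its elements lying at levels $1, 2,$ and $3$. First I would fix a convenient trivalent graph $\Gamma$ of first Betti number $1$ with $n$ leaves — concretely, a single loop with $n$ legs attached, or the ``necklace'' obtained by attaching one leg to each vertex of a cycle — so that the loop edge is isolated and its contribution to the semigroup can be studied on its own.

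The key reduction is that $CB^*_\Gamma$ is built by gluing the local genus-$0$ building blocks (the three-pointed $\P^1$ semigroups governing each trivalent vertex) along internal edges, with weights on edges constrained to lie in the $SL_3(\C)$ fusion cone. For a genus $1$ graph the new feature relative to genus $0$ is the single loop: its two ends are glued to \emph{the same} sequence of vertices, so the edge-weight running around the loop must be consistent with itself, which is exactly the phenomenon that forces higher-level generators. So the plan is: (i) invoke Theorem \ref{g0} to conclude that the ``genus $0$ part'' of the semigroup — everything supported on the tree obtained by cutting the loop — is generated in level $1$; then (ii) isolate the extra generators coming from the loop edge and bound the level at which they appear. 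I expect the analysis to come down to understanding the semigroup attached to a single edge that is glued to itself, i.e.\ the set of $SL_3(\C)$ weights $\mu$ together with multiplicities in $V_{0,n+2}(\ldots, \mu, \mu^*, L)$ traced around the loop, and showing that the corresponding affine semigroup of ``loop data'' is generated in levels at most $3$.

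The concrete combinatorial step I would carry out is to describe $CB^*_\Gamma$ for the chosen $\Gamma$ as a fiber product of the genus $0$ semigroup of Theorem \ref{g0} with a ``trace'' semigroup $\mathcal{D}$ recording the diagonal gluing $\mu = \mu^*$ along the loop, and then to compute a Hilbert basis of $\mathcal{D}$ (or rather of its relevant slice) explicitly. For $SL_3(\C)$ the weight lattice is rank $2$ and the level-$L$ alcove is a triangle, so $\mathcal{D}$ is a semigroup in a lattice of small rank whose generators can be enumerated directly; the claim to verify is that every lattice point of the associated cone is a nonnegative integer combination of Hilbert basis elements at levels $1, 2, 3$. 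The main obstacle, and the reason the bound is $3$ rather than $1$ or $2$, is precisely this loop contribution: the self-gluing constraint $\mu = \mu^*$ together with the fusion (level) inequalities produces a non-normal or at least non-unimodular local semigroup whose minimal generators are pushed up to level $3$. Establishing the sharp bound will require a careful Hilbert-basis computation for $\mathcal{D}$, analogous to the genus $1$ $SL_2(\C)$ analysis referenced via Section \ref{g=1}; I anticipate this is where most of the genuine work lies, with the rest of the argument being the formal transfer of a semigroup generating set back through the flat degeneration to $V_{C, \vec{p}}(SL_3(\C))$.
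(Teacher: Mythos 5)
Your proposal takes essentially the same route as the paper: the paper fixes the graph $\Gamma(1,n)$ (a loop attached to a caterpillar tree), shows via Proposition \ref{bound} (the factorization-extension argument of \cite{MZ}) that generation of the glued semigroup is controlled by the loop semigroup $BZ^*_{1,1}$, i.e.\ triangles whose two chosen boundary edges carry dual weights --- exactly your fiber product with the ``trace'' semigroup $\mathcal{D}$ --- and then exhibits explicit generators of $BZ^*_{1,1}$ in levels $1,2,3$, transferring the bound back through the flat degeneration. Like the paper, you defer the explicit Hilbert-basis verification for the loop semigroup, which is where the actual content of the bound $3$ resides.
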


Vector spaces of conformal blocks $V_{C, \vec{p}}(\vec{\lambda}, L)$ for a simple Lie algebra $\mathfrak{g}$ originate from the Wess-Zumino-Witten model of conformal field theory.  Here $(C, \vec{p})$ is the marked curve, $L$ is a non-negative integer known as the $level$ and $\vec{\lambda}$ is an $n-$tuple of dominant weights for the algebra $\mathfrak{g}.$   See \cite{TUY}, \cite{U}, \cite{Be}, and \cite{L}, and the Bourbaki review by Sorger, \cite{S} for the construction of these spaces.  

The spaces $V_{0, 3}(\lambda, \eta, \mu, L)$ can be described purely with representation theoretic data for the algebra $\mathfrak{g},$ see \cite{TUY}, \cite{U}.  This has lead several authors to discover positive polyhedral counting rules for their dimensions.  By this we mean that the dimensions of  the spaces $V_{C, \vec{p}}(\vec{\lambda}, L)$ can be computed by counting the lattice points in a convex polytope.    Several rules for $SL_3(\C)$ have been found by Walton, Mathieu, Senechal, and Kirillov, \cite{KMSW}, (see Section \ref{cb3} below), along with partial results for $SL_n(\C)$, $n > 3.$   Although these rules are stated in  \cite{KMSW}, we were unable to find explicit proofs in the literature, part of the motivation for this paper is to write down such a proof.

Our method is to use the commutative algebra algebra structures attached to conformal blocks. In \cite{M4} we showed that there is a sheaf of multigraded algebras $V(G)$ on $\bar{\mathcal{M}}_{g, n}$ which extends the sheaf of Cox rings over the smooth curves. For a particular stable curve $(C, \vec{p})$, the multigraded components of the fiber of this sheaf are the spaces of conformal blocks $V_{C, \vec{p}}(\vec{\lambda}, L)$.

\begin{equation}
V_{C, \vec{p}}(G) = \bigoplus_{\vec{\lambda}, L} V_{C, \vec{p}}(\vec{\lambda}, L)\\
\end{equation}

\noindent 
For fixed $\vec{\lambda}, L$ we also obtain the flat subsheaf $\mathcal{R}(\vec{\lambda}, L) \subset \mathcal{V}(G),$ where the fiber over $(C, \vec{p})$ smooth is the  projective coordinate ring of $\mathcal{M}_{C, \vec{p}}(G)$ defined by $\mathcal{L}(\vec{\lambda}, L).$

\begin{equation}
R_{C, \vec{p}}(\vec{\lambda}, L) = \bigoplus_{N = 0}^{\infty} H^0(\mathcal{M}_{C, \vec{p}}(G), \mathcal{L}(\vec{\lambda}, L)^{\otimes N})\\
\end{equation}

 In \cite{M4} we give degenerations of $V_{C, \vec{p}}(G)$ to simpler, but not necessarily toric, algebras for all $G.$  We do this by first passing to nodal curves $(C,\vec{p}) \in \bar{\mathcal{M}}_{g, n},$ then using a filtration on the algebra of conformal blocks built from the aforementioned factorization rules of \cite{TUY}.  This effectively reduces the problem of constructing a toric degeneration of $V_{C,\vec{p}}(G)$ to constructing such a degeneration for the algebra of conformal blocks $V_{0, 3}(G)$ on a three pointed genus $0$ curve, see Section \ref{state}.  Any polyhedral counting formulas developed for the conformal blocks on this pointed curve are then prime suspects for a toric degeneration of $V_{0,3}(G).$   

In Sections \ref{valsec} and \ref{cb3} we show that polyhedral counting rules stated in \cite{KMSW} emerge from toric degenerations of the algebra $V_{0, 3}(SL_3(\C))$, and we describe the relevant polyhedra in Section \ref{state}.   This is accomplished by an analysis of this algebra as a sub-algebra of an algebra $R_3(sl_3(\C))\otimes\C[X].$ This is a polynomial ring in one variable over an algebra $R_3(sl_3(\C))$ which comes from classical invariant theory, this algebra, and more generally $R_n(sl_3(\C)),$ is studied in \cite{MZ}.  The algebra $R_n(sl_3(\C))$ is the Cox ring of the moduli of $n-$point configurations on the full flag variety of $SL_3(\C).$  In \cite{MZ}, the author and T. Zhou prove results on toric degenerations and presentation considerations for this "classical invariant theory case" to those presented here in the "quantum case."

 In the case $G = SL_2(\C)$ the fact that $SL_2(\C)$ tensor products are multiplicity free is enough to establish a large family of toric degenerations that are useful in describing the algebras $V_{C, \vec{p}}(SL_2(\C))$ and $R_{C, \vec{p}}(\vec{r}, L).$   In \cite{M4} degenerations of the algebra $V_{C, \vec{p}}(SL_2(\C))$ are constructed and shown to be isomorphic to members of a class of semigroup algebras $\C[P_{\Gamma}]$ originating from mathematical biology.  Here, $P_{\Gamma}$ is a semigroup which depends on a trivalent graph $\Gamma$ of genus $g$ with $n$ leaves.  In \cite{BW}, Buczynska and Wiesniewski prove that members of this class of semigroup algebras are generated in degree $1,$ with quadratic relations, when $\Gamma$ is a tree.  In a pair of papers, \cite{Bu}, \cite{BBKM}, Buczynska, Buczynski, Kubjas and Michalek showed that $\C[P_{\Gamma}]$ is in general generated in degree $g +1.$  In \cite{M4}, we found a particular graph whose semigroup algebra is generated in degrees $1, 2$, with relations generated by quadratics in the generators.   Abe analyzes the $n =0$ case in \cite{A}, and shows that $V_{C}(SL_2(\C))$ is generated in degree $1.$ The algebras $R_{C, \vec{p}}(\vec{r}, L)$ are studied in \cite{M7}, where they are shown to be generically generated in degree $1$ with quadratic relations when each entry of $\vec{r}$ and $L$ are even. 

Degenerations  have also been constructed in the $g = 0$ $SL_2(\C)$ case by Sturmfels and Xu, see \cite{StXu}. The algebra $V_{\mathbb{P}^1, \vec{p}}(SL_2(\C))$ was studied in \cite{StV}, where it was presented as a quotient of the projective coordinate ring of an even spinor variety $\mathcal{S}^+.$    The $g = 0, SL_2(\C)$ case is notable because much is known about the Mori cone of the associated coarse moduli.  Now that generators of the Cox ring are known in the $SL_3(\C)$ case (Theorem \ref{g0}), it would be interesting to study the birational geometry of the coarse moduli in this case as well. In general, we would like to have presentation data for the semigroup algebras $\C[CB_{\Gamma}^*],$ and $\C[CB_{\Gamma}^*(\vec{\lambda}, L)].$

\begin{problem}
Find a minimal generating set of the semigroup algebras $\C[CB_{\Gamma}^*],$ and $\C[CB_{\Gamma}^*(\vec{\lambda}, L)].$
\end{problem}
  
Having $3|V(\Gamma)|$ semigroups for each graph $\Gamma$ is a bit of an embarassment of riches.  The number of lattice points in these polytopes, indeed the multigraded Hilbert functions of $\C[CB_{\Gamma}^*]$ do not depend on the graph $\Gamma,$ or even the choice of a semigroup from our three possibilities at each trinode. It would be fruitful to describe how these polytopes are combinatorially related. 

\begin{problem}\label{PL}
For two trivalent graphs $\Gamma, \Gamma'$ with first Betti number $g$ and $n$ leaves, find a piecewise linear map $p_{\Gamma, \Gamma'}: CB_{\Gamma}^* \to CB_{\Gamma'}^*,$ which is a bijection on lattice points. 
\end{problem}

\noindent

An algebraist may wonder why we focus on toric degenerations if our concern is for a more refined understanding of the algebra $V_{C, \vec{p}}(G)$, as monomial ideal degenerations can be just as useful from the perspective of combinatorics.  First, monomial ideal degenerations can be produced from the binomial degenerations we consider here, and we believe that toric degenerations are more significant to related concerns in symplectic geometry in light of recent work of Kaveh and Harada, \cite{KH}.  In general, we hope that the techniques we employ here can be applied to other moduli of structures on surfaces.

\subsection{Acknowledgements}

We are grateful for many illuminating conversations with Weronika Buczynska, Noah Giansiracusa, Angela Gibney, Christian Haase, Nathan Ilten, Alan Knutson, Christian Korff,  Kaie Kubjas,  Anne Schilling, David Swinarski, and Zhengyuan Zhou.  We also thank the referees for suggestions and helpful comments.

\section{Description of the degenerations}\label{state}

In this section we describe the polytopes $CB_{\Gamma}^*.$  We will do this in steps, by first describing the construction from \cite{M4}.  First, we degenerate the algebra of conformal blocks $V_{C, \vec{p}}(SL_3(\C))$ to the invariants of a torus in a tensor product of many copies of $V_{0, 3}(SL_3(\C)).$  We begin by choosing a trivalent graph $\Gamma$ of genus $g$ with $n$ leaves labeled $\{1, \ldots, n\}.$

\begin{figure}[htbp]
\centering
\includegraphics[scale = 0.43]{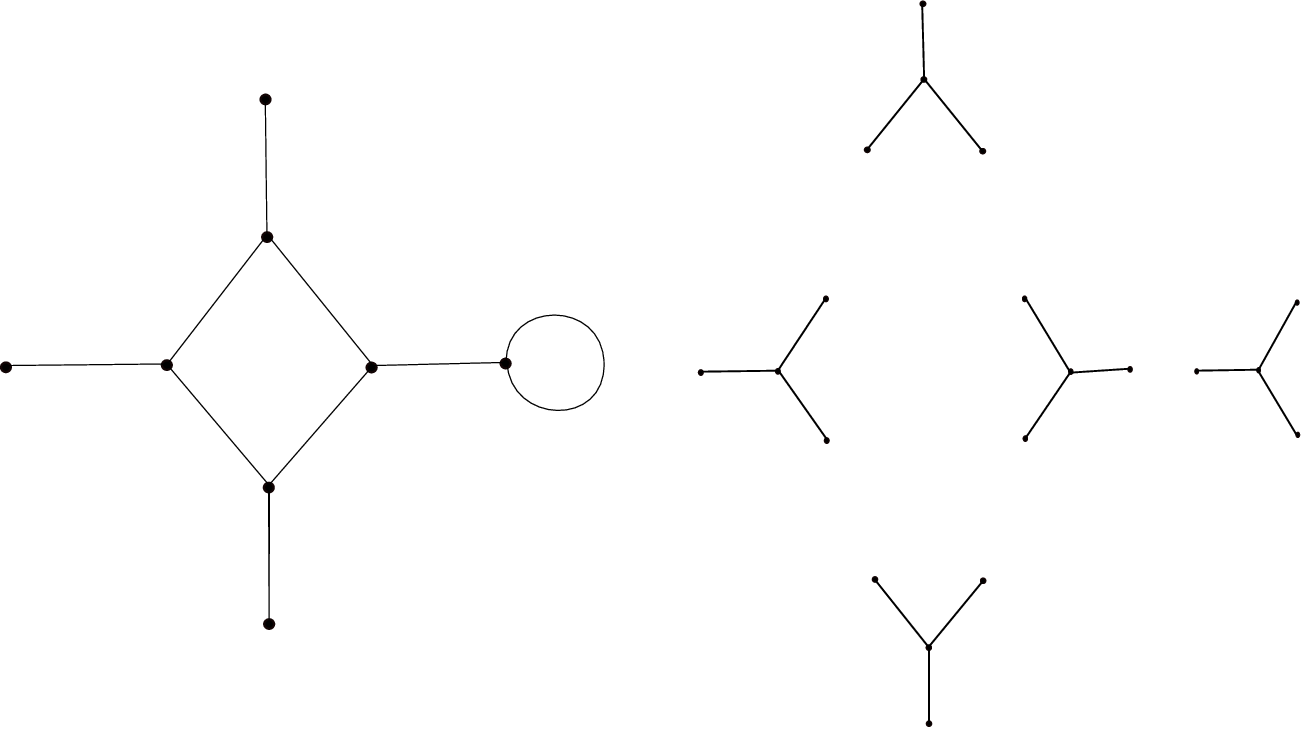}
\caption{A trivalent graph $\Gamma$ with associated forest $\hat{\Gamma}.$}
\label{fig:graph}
\end{figure}

We consider the forest $\hat{\Gamma}$ of trinodes obtained by splitting every edge of $\Gamma$ not connected to a leaf.  To each trinode $v \in V(\Gamma)$ we associate a copy of $V_{0, 3}(G)$ and make the tensor product $\bigotimes_{v\in V(\Gamma)} V_{0, 3}(G).$ This algebra carries the action of a large torus $T^{\Gamma},$ built by associating to each edge $e \in E(\hat{\Gamma})$ a product $T\times \C^*$ of a maximal torus $T \subset G$ with the non-zero complex numbers.  The torus $T^{\Gamma}$ acts on the component $\bigotimes_{v \in V(\Gamma)} V_{0, 3}(\lambda_v, \gamma_v, \eta_v, L_v) \subset \bigotimes_{v \in V(\Gamma)} V_{0, 3}(G)$ with the character obtained as the sum of differences $\sum_{v, v' \in V(\Gamma)} (\lambda_v^*, L_v) - (\lambda_{v'}, L_{v'})$ where $v$ and $v'$ are the end points of $e,$ and $\lambda_v, \lambda_{v'}$ are the weights assigned to the edges which map down to $e$ under the identification map $\hat{\Gamma} \to \Gamma.$

\begin{proposition}[M4]\label{propm4}
The algebra of invariants $[\bigotimes_{v\in V(\Gamma)} V_{0, 3}(G)]^{T^{\Gamma}}$ is the direct sum of the components   $\bigotimes_{v \in V(\Gamma)} V_{0, 3}(\lambda_v, \gamma_v, \eta_v, L_v)$ such that all the levels $L_v$ agree and $\lambda_v^* = \lambda_v$ for any two $v, v'$  related as above.
\end{proposition}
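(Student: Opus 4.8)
The plan is to recognize this proposition as a purely formal statement about decomposing a tensor-product algebra into weight spaces under a torus action, so the proof should amount to tracking characters carefully rather than invoking any deep geometry. First I would fix notation: the algebra $A = \bigotimes_{v \in V(\Gamma)} V_{0,3}(G)$ is graded by its natural multi-grading coming from the labels $(\lambda_v, \gamma_v, \eta_v, L_v)$ at each trinode, and the torus $T^\Gamma = \prod_{e \in E(\hat\Gamma)} (T \times \C^*)$ acts with the weight described in the excerpt. Since $T^\Gamma$ acts by a torus, the invariants $A^{T^\Gamma}$ decompose as the sum of exactly those graded components on which the $T^\Gamma$-character is trivial, so the entire content reduces to computing when that character vanishes.

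Next I would unwind the character edge by edge. For each internal edge $e$ with endpoints $v, v'$ in $\Gamma$, the two half-edges of $\hat\Gamma$ lying over $e$ carry labels that I will call $(\lambda_v, L_v)$ and $(\lambda_{v'}, L_{v'})$; the action of the factor $T \times \C^*$ attached to $e$ contributes the difference $(\lambda_v^*, L_v) - (\lambda_{v'}, L_{v'})$ to the total character. I would then argue that the total $T^\Gamma$-character on a given component is trivial if and only if each edge contribution vanishes independently, because the edges index distinct, independent factors of the torus and hence contribute to independent coordinates of the character lattice. Vanishing of the $\C^*$-part on edge $e$ forces $L_v = L_{v'}$, and since $\Gamma$ is connected, propagating this equality across all edges forces all the levels $L_v$ to agree to a common value $L$. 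Vanishing of the $T$-part forces $\lambda_v^* = \lambda_{v'}$ for the two labels meeting at each edge, which is precisely the matching condition in the statement.

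Finally I would assemble these two pointwise conditions into the claimed description: the invariant subalgebra is the direct sum of those components $\bigotimes_{v} V_{0,3}(\lambda_v, \gamma_v, \eta_v, L_v)$ for which all $L_v$ coincide and the dual-matching $\lambda_v^* = \lambda_{v'}$ holds across every internal edge. I would note that this is an equality of graded pieces, so the sum over admissible components is exactly $A^{T^\Gamma}$, with no further relations to check, since being a sum of weight spaces is automatic for a torus action on a graded algebra.

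I expect the main obstacle to be bookkeeping rather than conceptual: the map $\hat\Gamma \to \Gamma$ identifies the two half-edges over each internal edge, and one must be careful to distinguish the weight $\lambda_v^*$ from $\lambda_v$ and to orient the difference $(\lambda_v^*, L_v) - (\lambda_{v'}, L_{v'})$ consistently, so that the dual appears on exactly one side and the matching condition comes out as $\lambda_v^* = \lambda_{v'}$ rather than some other pairing. A secondary point requiring care is verifying that the edge contributions really are independent in the character lattice $\mathcal{X}(T^\Gamma)$, which follows because $T^\Gamma$ is a direct product indexed by the edges of $\hat\Gamma$; once this independence is established, the decomposition of invariants into the stated components is immediate.
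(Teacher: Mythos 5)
Your proof is correct and takes the intended approach: the proposition is precisely the formal statement that invariants under the diagonalizable group $T^{\Gamma}$ are the sum of the multigraded components with trivial character, and your edge-by-edge analysis (the edge factors of $T^{\Gamma}$ contribute to independent coordinates of the character lattice, each factor's character vanishes iff $L_v = L_{v'}$ and $\lambda_v^* = \lambda_{v'}$, and connectivity of $\Gamma$ propagates the level equality globally) is exactly the required bookkeeping. Note that this paper does not reprove the proposition but cites it from [M4], so there is no divergence to report; your reconstruction is the standard weight-space argument underlying that citation.
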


This algebra is multigraded by $networks$ of dominant $G-$weights, as depicted in Figure \ref{fig:network}.  The edge labeled "$\lambda$" is oriented to indicate that the "input" trinode receives the weight $\lambda^*$ and the "output" trinode receives the weight $\lambda.$

\begin{figure}[htbp]
\centering
\includegraphics[scale = 0.35]{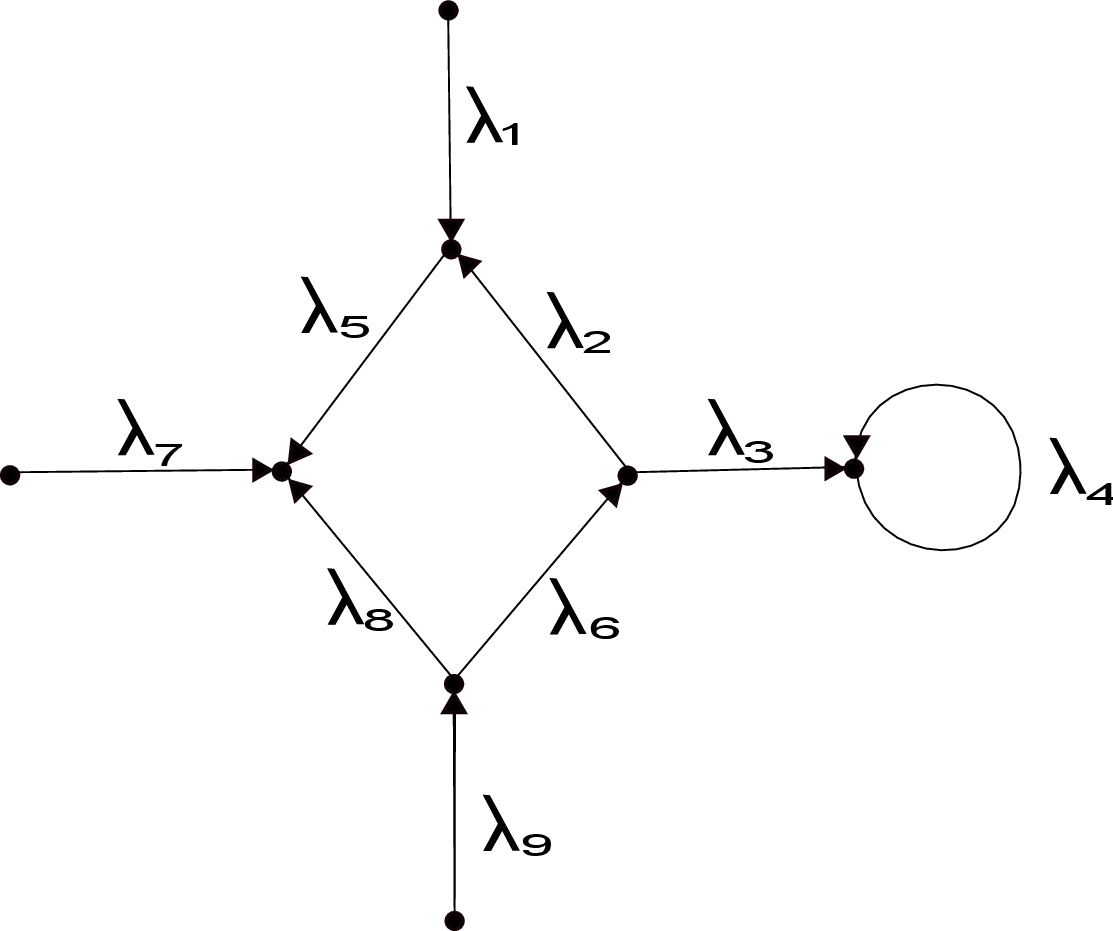}
\caption{}
\label{fig:network}
\end{figure}

\begin{theorem}[M4]\label{m4}
For every trivalent graph $\Gamma$ of genus $g$ with $n$ labeled leaves, there is a flat degeneration 

\begin{equation}
V_{C, \vec{p}}(G) \Rightarrow [\bigotimes_{v\in V(\Gamma)} V_{0, 3}(G)]^{T^{\Gamma}}\\
\end{equation}

\end{theorem}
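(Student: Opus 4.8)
The plan is to realize the asserted degeneration in two stages: a deformation of the underlying curve, followed by a filtration coming from the sewing (factorization) structure of conformal blocks. First I would exploit the fact that the spaces $V_{C, \vec{p}}(\vec{\lambda}, L)$ are the fibers of vector bundles $\mathcal{V}(\vec{\lambda}, L)$ over $\bar{\mathcal{M}}_{g,n}$, so that the total sheaf of algebras $\mathcal{V}(G) = \bigoplus_{\vec{\lambda}, L} \mathcal{V}(\vec{\lambda}, L)$ is locally free in each multigraded piece. Choosing a one-parameter family in $\bar{\mathcal{M}}_{g,n}$ whose generic fiber is the smooth marked curve $(C, \vec{p})$ and whose special fiber is the maximally degenerate stable curve $(C_0, \vec{p})$ with dual graph exactly $\Gamma$ — a union of three-pointed rational components glued at nodes along the internal edges of $\Gamma$ — the restriction of $\mathcal{V}(G)$ to this family is a flat family of graded algebras. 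This yields the first flat degeneration $V_{C, \vec{p}}(G) \Rightarrow V_{C_0, \vec{p}}(G)$.

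Second, I would apply the Tsuchiya--Ueno--Yamada factorization theorem at each node of $C_0$. Iterating the rule across all internal edges decomposes each graded piece as a direct sum over labelings $\vec{\mu}$ of the internal edges by dominant weights of level at most $L$,
\begin{equation}
V_{C_0, \vec{p}}(\vec{\lambda}, L) \cong \bigoplus_{\vec{\mu}} \bigotimes_{v \in V(\Gamma)} V_{0, 3}(\ldots, L),
\end{equation}
where the two weights attached to the ends of each internal edge are dual and all levels coincide. Summing over all $\vec{\lambda}, L$, the right-hand side is precisely the list of components of $\bigotimes_{v} V_{0,3}(G)$ singled out by the Proposition of \cite{M4} quoted above: these are exactly the $T^{\Gamma}$-fixed components, since the factor of $T^{\Gamma}$ attached to an edge $e$ acts on a component through the difference of the weight-level data at the two ends of $e$, so invariance is equivalent to dual matching of weights across each edge together with equality of levels. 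Thus as multigraded vector spaces $V_{C_0, \vec{p}}(G) \cong [\bigotimes_{v} V_{0,3}(G)]^{T^{\Gamma}}$.

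The final, and I expect the hardest, step is to promote this vector-space identification to an isomorphism of algebras. The conformal-block product on $V_{C_0, \vec{p}}(G)$ need not respect the edge-labeling direct sum: multiplying components indexed by $\vec{\mu}_1$ and $\vec{\mu}_2$ will in general spread across several components $\vec{\mu}$, with the leading contribution in $\vec{\mu}_1 + \vec{\mu}_2$ matching the factored product on each $V_{0,3}$ tensor factor and strictly smaller contributions produced by the sewing map. I would organize these into a filtration of $V_{C_0, \vec{p}}(G)$ indexed by the edge weights and argue that $\gr(V_{C_0, \vec{p}}(G))$ is exactly $[\bigotimes_{v} V_{0,3}(G)]^{T^{\Gamma}}$ with its tensor-product multiplication restricted to invariants; the Rees construction then furnishes the second flat degeneration $V_{C_0, \vec{p}}(G) \Rightarrow [\bigotimes_{v} V_{0,3}(G)]^{T^{\Gamma}}$, and composing it with the first proves the theorem. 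The main obstacle is precisely this compatibility: one must verify, via the normalization of the TUY gluing map and of the $V_{0,3}$ multiplication, that the associated graded genuinely splits as the tensor product rather than producing surviving cross terms, so that the filtered multiplication degenerates to the invariant tensor-product multiplication.
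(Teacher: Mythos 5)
Your proposal is correct and takes essentially the same route as the paper: the paper quotes this theorem from \cite{M4} and describes exactly your two-stage strategy---first passing to a nodal curve in $\bar{\mathcal{M}}_{g,n}$ whose dual graph is $\Gamma$, then filtering the resulting algebra by the dominant-weight data on edges coming from the Tsuchiya--Ueno--Yamada factorization rules, so that the invariant tensor-product multiplication appears as the ``highest part'' of the conformal-block multiplication. The compatibility issue you flag as the hardest step is precisely what \cite{M4} resolves with this weight filtration, so there is no gap relative to the argument the paper records.
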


The two vector spaces in this theorem are isomorphic by Tsuchiya, Ueno, Yamada's proof
of the factorization rules.  The content of the theorem is that the multiplication operation
in the algebra  $[\bigotimes_{v\in V(\Gamma)} V_{0, 3}(G)]^{T^{\Gamma}}$ is the "highest part" of the multiplication operation in $V_{C, \vec{p}}(G)$ with respect to a natural term order. This theorem reduces the problem to finding toric degenerations of $V_{0, 3}(SL_3(\C))$ which are $(T\times \C^*)^3-$invariant (essentially this is locating binomial points on a relevant multigraded Hilbert scheme, more on this point in Section \ref{cb3}).

We will see in Section \ref{cb3} that the algebra $V_{0, 3}(SL_3(\C))$ is generated by the components multigraded by the simple weights $(0, 1)$ and $(1, 0).$ These components are all multiplicity free, and so have a unique conformal block associated to them. The weights $(0, 1)$ and $(1, 0)$ are dual to each other, so our network graphical device above serves to give a presentation of $V_{0, 3}(SL_3(\C)),$ see Figure \ref{fig:trinodes}.

\begin{figure}[htbp]
\centering
\includegraphics[scale = 0.55]{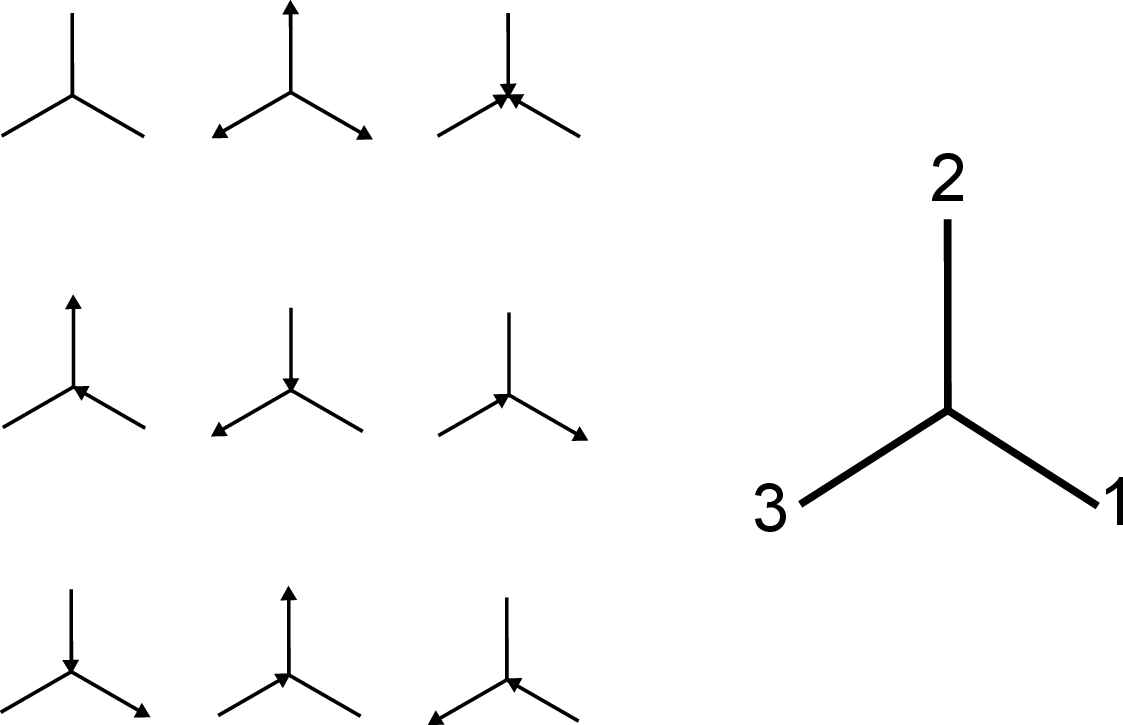}
\caption{}
\label{fig:trinodes}
\end{figure} 

We label these generators from left to right, top down, $X, S, T$, $ P_{12}, P_{23}, P_{31},$ $P_{21}, P_{32}, P_{13}.$  These come with a map $\partial$ to triples of dominant weights of $SL_3(\C).$

$$\partial(X) = [(0, 0), (0, 0), (0, 0)]$$

$$\partial(T) = [(0, 1), (0, 1), (0, 1)], \ \ \partial(S) = [(1, 0), (1, 0), (1, 0)],$$ 

$$\partial(P_{32}) = [(0, 0), (0, 1), (1, 0)], \ \ \partial(P_{12}) = [(1, 0), (0, 1), (0, 0)],$$ 

$$\partial(P_{13}) = [(1, 0), (0, 0), (0, 1)], \ \ \partial(P_{23}) = [(0, 0), (1, 0), (0, 1)],$$

$$\partial(P_{21}) = [(0, 1), (1, 0), (0, 0)], \ \ \partial(P_{31}) = [(0, 1), (0, 0), (1, 0)],$$

\noindent
We let $\partial_i$ denote the $i-$the component of the map $\partial.$
The three degenerations of $V_{0, 3}(SL_3(\C))$ are then given by the following three binomial relations. 

\begin{equation}
XST = P_{12}P_{23}P_{31}\\
\end{equation}

\begin{equation}
XST = - P_{21}P_{32}P_{13}\\
\end{equation}

\begin{equation}
 P_{12}P_{23}P_{31} = P_{21}P_{32}P_{13}\\
\end{equation}

Note that these relations define isomorphic semigroups, but non-isomorphic semigroups-with-map $\partial$ to $\R_{\geq 0}^2.$  This is important as the semigroups we obtain as degenerations of $V_{C, \vec{p}}(SL_3(\C))$ are constructed as fiber products of these "local" semigroups with respect to the map $\partial$ over the structure of the graph $\Gamma.$   As a result we actually obtain $3|V(\Gamma)|$ possible semigroup degenerations of $V_{C, \vec{p}}(SL_3(\C))$ per graph $\Gamma$, each one a potential tool of investigation.

We let $CB_3^*$ denote the semigroup defined
by the relation $P_{12}P_{23}P_{31} = P_{21}P_{32}P_{13}$ and $BZ_3^*$ denote
the semigroup defined by the relation $XST = P_{12}P_{23}P_{31}.$  We let $CB_3^*(\alpha, \beta, \gamma, L)$ and $BZ_3^*(\alpha, \beta, \gamma, L)$ denote
the $\partial-$fibers over $(\alpha, \beta, \gamma)$ in the $L-$th Minkowski sums $L \circ CB_3^*,$ $L \circ BZ_3^*,$ respectively. Both of these semigroups have appeared in the work of Walton, Senechal, Mathieu and Kirillov on counting $SL_3(\C)$ conformal blocks, \cite{KMSW}.  In particular, we recover their counting results.

\begin{corollary}
The dimension of the space $V_{0, 3}(\alpha, \beta, \gamma, L)$ is equal to the number of lattice points in the polytope $CB_3^*(\alpha, \beta, \gamma, L).$
\end{corollary}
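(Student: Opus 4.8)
The plan is to read off the dimension count as a consequence of the flatness of the degeneration $V_{0,3}(SL_3(\C)) \Rightarrow \C[CB_3^*]$ defined above by the binomial relation $P_{12}P_{23}P_{31} = P_{21}P_{32}P_{13}$. The organizing principle is standard: a flat degeneration of a graded algebra over $\mathbb{A}^1$ preserves the Hilbert function, and if the family is equivariant for the grading torus it preserves the multigraded Hilbert function piece by piece. Thus if I can (i) realize the passage to $\C[CB_3^*]$ as such an equivariant flat family and (ii) identify the $(\alpha,\beta,\gamma,L)$-graded piece of $\C[CB_3^*]$ with the lattice points of $CB_3^*(\alpha,\beta,\gamma,L)$, the corollary follows immediately.

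For (i), recall that $V_{0,3}(SL_3(\C))$ is generated by the nine multiplicity-free components $X,S,T,P_{12},\ldots,P_{13}$, each of which is one-dimensional, so relations can only be forced in multidegrees where several monomials in these generators collide; the relevant one is the multidegree $[(1,1),(1,1),(1,1)]$ at level $3$, where $XST$, $P_{12}P_{23}P_{31}$, and $P_{21}P_{32}P_{13}$ all lie and satisfy a single linear relation. I would choose a weighting on the generators, compatible with the $(T\times\C^*)^3$-grading, whose associated initial ideal of the relation ideal is the toric ideal generated by $P_{12}P_{23}P_{31} - P_{21}P_{32}P_{13}$; the Rees algebra of the corresponding filtration then gives the flat $\C[t]$-family with special fiber $\C[CB_3^*]$, equivariant by construction. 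For (ii), by definition of the semigroup algebra the $(\alpha,\beta,\gamma,L)$-graded piece of $\C[CB_3^*]$ has a basis indexed by those $\bs \in CB_3^*$ of level $L$ with $\partial(\bs) = (\alpha,\beta,\gamma)$, and these are precisely the semigroup points in the $\partial$-fiber of the $L$-th Minkowski sum $L\circ CB_3^*$, i.e.\ the lattice points of the polytope $CB_3^*(\alpha,\beta,\gamma,L)$. Combining the two steps yields
\[
\dim V_{0,3}(\alpha,\beta,\gamma,L) \;=\; \dim \C[CB_3^*]_{(\alpha,\beta,\gamma,L)} \;=\; \#\,\{ \text{lattice points of } CB_3^*(\alpha,\beta,\gamma,L) \},
\]
the asserted equality.

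The main obstacle is the flatness in step (i): one must check that the nine generators together with the single binomial really present the initial algebra, with no higher-degree relations surviving the degeneration, which amounts to exhibiting a Gr\"obner basis of the relation ideal whose leading terms generate the toric ideal of $CB_3^*$. A second delicate point is that the lattice-point count is the correct one only if the semigroup $CB_3^*$ is saturated, so that its $\partial$-fibers coincide with all integer points of the fiber polytopes rather than a proper sub-semigroup; establishing this normality of $CB_3^*$ is where I expect the genuine work to lie. Once flatness and saturation are secured, invariance of the multigraded Hilbert function delivers the count with no further computation.
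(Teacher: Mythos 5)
Your outline has the right general shape --- degenerate, then invoke invariance of the multigraded Hilbert function --- but it assumes exactly the facts whose proof constitutes the paper's actual argument. In step (i) you ``recall'' that $V_{0,3}(SL_3(\C))$ is generated by the nine level-one components and that the relation ideal is generated by the single cubic in multidegree $[(1,1),(1,1),(1,1)]$, level $3$. Neither statement is available as an input: they are the theorems of Sections \ref{val} and \ref{cb3}, and your justification --- that relations can only be forced where monomials in the generators collide, and that the only relevant multidegree is this one --- does not hold up, because monomials in these generators collide in infinitely many multidegrees ($(XST)^2$, $(XST)(P_{12}P_{23}P_{31})$, and so on), so no bookkeeping of degrees can show the relation ideal is principal. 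Worse, the argument becomes circular: the multigraded Hilbert function of $V_{0,3}(SL_3(\C))$ that you propose to transport through the flat family is precisely the quantity the corollary computes, and your only access to it is through the assumed presentation. The paper's route to that presentation is genuinely representation-theoretic: it uses the presentation of $R_3(sl_3(\C))$ coming from the degeneration to $\C[BZ_3]$ (Theorem \ref{br3}), Ueno's theorem identifying the level-$L$ blocks with the invariants $f$ satisfying $v_{\theta}(f) \leq L$ for the $sl_2(\C)$-branching valuation $v_{\theta}$, and the explicit monomials $M_0, \ldots, M_k$ with consecutive $v_{\theta}$-values, whose number is calibrated against the Berenstein--Zelevinsky triangle count. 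That calibration is what simultaneously proves generation, principality of the relation ideal (via the subduction-basis lifting), and, as an immediate byproduct, the dimension count: the $M_{\ell}$ with $v_{\theta}(M_{\ell}) \leq L$ form a basis of $V_{0,3}(\alpha,\beta,\gamma,L)$, and they biject with the points of $CB_3^*(\alpha,\beta,\gamma,L)$.

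Two further calibration points. You place the ``genuine work'' of step (i) in exhibiting a Gr\"obner basis, but once the relation ideal is known to be principal there is nothing left to do: a quotient of a polynomial ring by a single nonzero homogeneous element of fixed multidegree has multigraded Hilbert function independent of which element it is, so the trinomial and binomial quotients match degree by degree with no Rees-algebra machinery. And your worry about saturation of $CB_3^*$, while legitimate for the literal phrase ``lattice points of the polytope,'' is secondary: the graded pieces of $\C[CB_3^*]$ are counted by semigroup points of the $\partial$-fibers in $L \circ CB_3^*$ by definition, and the normality needed to identify these with lattice points of the fiber polytope is established (for all $CB_{\tree}^*$, the trinode included) in the genus-zero section; it is not where the difficulty of this corollary lies.
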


 We will build a polytope $CB_{\Gamma}^*$, and let $CB_{\Gamma}^*(\vec{\lambda}, L)$ be the fiber over $\vec{\lambda}$ in $L \circ CB_{\Gamma}^*.$

\begin{corollary}
The dimension of the space $V_{C, \vec{p}}(\vec{\lambda}, L)$ is equal to the number of lattice points in the polytope $CB_{\Gamma}^*(\vec{\lambda}, L).$
\end{corollary}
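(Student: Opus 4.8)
The plan is to combine the chain of flat degenerations established earlier in the excerpt with the fact that flat degenerations preserve Hilbert functions. The final statement asserts that $\dim V_{C, \vec{p}}(\vec{\lambda}, L)$ equals the number of lattice points in $CB_{\Gamma}^*(\vec{\lambda}, L)$. First I would recall that by Corollary \ref{mainweights}, the projective coordinate algebra $R_{C, \vec{p}}(\vec{\lambda}, L)$ flatly degenerates to $\C[CB_{\Gamma}^*(\vec{\lambda}, L)]$; more usefully for a single graded piece, Theorem \ref{main} gives a flat degeneration of the full Cox ring to the semigroup algebra $\C[CB_{\Gamma}^*]$. The key point is that a flat degeneration is an isotrivial family whose generic and special fibers share the same Hilbert function with respect to the grading by $(\vec{\lambda}, L)$.

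The main step is to track the multigrading through the degeneration. The Cox ring is graded by $\mathcal{X}(B)^n \times \Z = Pic(\mathcal{M}_{C,\vec p}(SL_3(\C)))$, and the $(\vec\lambda, L)$-graded piece is exactly $V_{C, \vec{p}}(\vec{\lambda}, L)$ by the identification $H^0(\mathcal{M}_{C, \vec{p}}(SL_3(\C)), \mathcal{L}(\vec{\lambda}, L)) = V_{C, \vec{p}}(\vec{\lambda}, L)$. I would verify that the degeneration of Theorem \ref{main} respects this multigrading, so that the $(\vec\lambda, L)$-component degenerates to the corresponding graded piece of $\C[CB_{\Gamma}^*]$. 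By the construction of the semigroup as a fiber product of the local semigroups $CB_3^*$ over the combinatorics of $\Gamma$ via the boundary map $\partial$, that graded piece is precisely the span of semigroup elements lying over $(\vec{\lambda}, L)$, and its dimension as a $\C$-vector space is the number of such lattice points, i.e. $|CB_{\Gamma}^*(\vec{\lambda}, L) \cap \text{(lattice)}|$. Since the degeneration is flat and preserves dimensions graded-piece by graded-piece, we conclude
\begin{equation*}
\dim V_{C, \vec{p}}(\vec{\lambda}, L) = |CB_{\Gamma}^*(\vec{\lambda}, L) \cap \Z^N|.
\end{equation*}

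The step I expect to be the main obstacle is confirming that the flatness of Theorem \ref{main} is compatible with the $Pic$-multigrading finely enough to conclude fiberwise equality of dimensions in each fixed degree $(\vec\lambda, L)$, rather than merely agreement of total Hilbert polynomials. This requires that the degeneration be realized by a $\Z^{\mathcal{X}(B)^n \times \Z}$-graded (or at least $(\vec\lambda, L)$-homogeneous) family, so that the associated graded pieces are free modules of the stated rank over the base of the degeneration. Once this multigraded flatness is in hand, the lattice-point count for the semigroup algebra is immediate: the dimension of the $(\vec\lambda, L)$-piece of a semigroup algebra $\C[S]$ is the number of elements of $S$ mapping to $(\vec\lambda, L)$, which by our identification is exactly the lattice points of the fiber polytope $CB_{\Gamma}^*(\vec{\lambda}, L)$.
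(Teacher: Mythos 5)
Your proposal is correct and follows essentially the same route as the paper, which presents this corollary as an immediate consequence of the flat degeneration in Theorem \ref{main} (and Corollary \ref{mainweights}): the degeneration is built from $T^{\Gamma}$-invariant filtrations that refine the $Pic$-multigrading, so multigraded Hilbert functions are preserved, and the $(\vec{\lambda}, L)$-graded piece of $\C[CB_{\Gamma}^*]$ has dimension equal to the number of lattice points of $CB_{\Gamma}^*(\vec{\lambda}, L)$. The compatibility with the multigrading that you flag as the main obstacle is automatic from the construction, since the degenerations are defined degree-by-degree on the conformal block decomposition.
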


 We construct the polytope $CB_{\Gamma}^*$ in a manner similar to $[\bigotimes_{v \in V(\Gamma)} V_{0, 3}(G)]^{T^{\Gamma}}.$ Once again, we consider the forest $\hat{\Gamma}$ and assign each trinode a copy of $CB_3^*.$ Next, we take the fiber product polytope by requiring that the $\partial$ values assigned at edges identified by $\hat{\Gamma} \to \Gamma$ be dual to each other. The lattice points of $CB_{\Gamma}^*$ can be visualized as oriented multigraphs as above.

\begin{figure}[htbp]
\centering
\includegraphics[scale = 0.65]{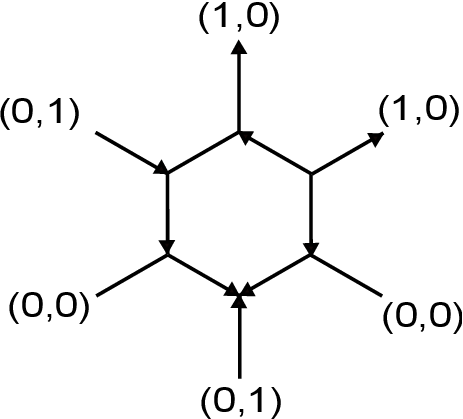}
\caption{}
\label{fig:trinet}
\end{figure}

\section{Commutative algebras associated to $sl_3(\C)$}\label{com}

We fix once and for all the choice of roots $L_1-L_2, L_2- L_3, L_1 - L_3$ for $sl_3(\C)$ and identify the set of dominant $sl_3(\C)$ weights with respect to this choice with $\Z_{\geq 0}^2.$ Recall that the fundamental irreducible representations of $sl_3(\C)$ are $V(1, 0) \cong \C^3$ and $V(0, 1) \cong \bigwedge^2(\C^3)$. We give these spaces the bases $x_1, x_2, x_3$ and $y_1, y_2, y_3,$ respectively, with $y_i = x_j \wedge x_k.$

We let $R$ denote the Cox ring of the full flag variety $SL_3(\C)/B.$  This is a $\Z_{\geq 0}^2$ graded, $SL_3(\C)$ algebra, with a full, multiplicity free decomposition into irreducible representations. 

\begin{equation}
R = \bigoplus_{\alpha \in \mathbb{Z}_{\geq 0}^2} V(\alpha)\\
\end{equation}

\noindent
The algebra  $R$ is a natural byproduct of the embedding of $SL_3(\C)/B$ into the variety $\mathbb{P}(\C^3)\times \mathbb{P}(\bigwedge^2(\C^3))$
given by sending a flag $\mathcal{F} = \{ \ell \subset w\}$ to its Pl\"ucker coordinates.  The pullbacks of the two generators of $Pic(\mathbb{P}(\C^3)\times \mathbb{P}(\bigwedge^2(\C^3)))$ generate the Picard group of $SL_3(\C)/B$ and give a multigraded surjection, 

\begin{equation}
Cox(\mathbb{P}(\C^3)\times \mathbb{P}(\bigwedge^2(\C^3))) = \C[x_1, x_2, x_3, y_1, y_2, y_3] \to R.\\
\end{equation}

\noindent
The kernel of this surjection is generated by the form $x_1y_1 + x_2y_3 + x_3y_3,$  which can be seen as the condition that the normal of $w \in \mathcal{F}$ vanish on the coordinates of $\ell \in \mathcal{F}.$  

We let $R_3(sl_3(\C))$ be the algebra of invariants $[R\otimes R\otimes R]^{sl_3(\C)}.$ As a vector space, this is the direct sum of all spaces of invariants in triple tensor products of irreducible $sl_3(\C)$ representations. 

\begin{equation}
R_3(sl_3(\C)) = \bigoplus_{\alpha, \beta, \gamma} (V(\alpha)\otimes V(\beta) \otimes V(\gamma))^{sl_3(\C)}\\
\end{equation}

\noindent
Our analysis of conformal blocks involves the algebra $R_3(sl_3(\C))$ and the following semigroup.

\begin{definition}
We let $BZ_3$ denote the semigroup of non-negative integer weightings of the diagram in Figure \ref{fig:BZ} such that the sums of pairs of weights opposite each other across the  hexagon are equal.

\begin{figure}[htbp]
\centering
\includegraphics[scale = 0.35]{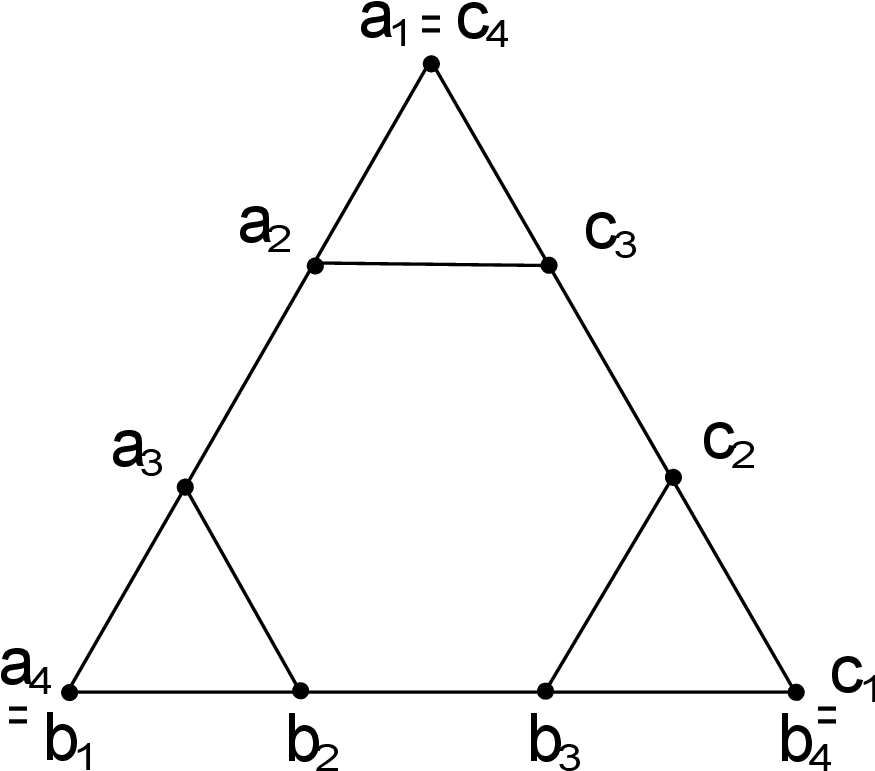}
\caption{}
\label{fig:BZ}
\end{figure} 

$$a_2+ a_3 = b_3 + c_2,$$

$$b_2+ b_3 = c_3 + a_2,$$

$$c_2+ c_3 = a_3 + b_2.$$
\end{definition}

We orient the triangle counter-clockwise.  For $a_1, a_2, a_3, a_4$ the consecutive weights along an edge, we obtain a map to the dominant weights of $sl_3(\C),$ $(a_1 + a_2, a_3 + a_4) \in \mathbb{Z}_{\geq 0}^2.$  For a triangle
$X \in BZ_3$, define $\partial(X) = (\alpha, \beta, \gamma) \in (\mathbb{Z}_{\geq 0}^2)^3$ to be the vector of the three dominant weights obtained this way from the three sides of $X$.  

\begin{theorem}[Berenstein, Zelevinksy, \cite{BZ1}]
The dimension $dim[(V(\alpha)\otimes V(\beta) \otimes V(\gamma))^{sl_3(\C)}]$ is equal to the number of BZ triangles in $\{X \in BZ_3 | \partial(X) = (\alpha, \beta, \gamma)\} = BZ_3(\alpha, \beta, \gamma).$ 
\end{theorem}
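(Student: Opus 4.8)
The plan is to pin down the multigraded algebra $R_3(sl_3(\C))$ as an explicit hypersurface ring, read off a monomial basis in each multidegree, and match those monomials bijectively with BZ triangles of the prescribed $\partial$.

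First I would determine the generators and relations of $R_3(sl_3(\C))$ by classical invariant theory. Writing $x^{(i)} \in V(1,0) = \C^3$ and $y^{(i)} \in V(0,1) = \bigwedge^2 \C^3$ for the Pl\"ucker coordinates of the three copies of $R$, the First Fundamental Theorem for $SL_3(\C)$ shows that the invariants of these vectors and covectors are generated by the contractions $P_{ij} = \langle x^{(i)}, y^{(j)} \rangle$, the bracket $S = \det(x^{(1)}, x^{(2)}, x^{(3)})$, and the bracket $T = \det(y^{(1)}, y^{(2)}, y^{(3)})$. The relation $x_1 y_1 + x_2 y_2 + x_3 y_3$ defining $R$ forces the diagonal contractions $P_{ii}$ to vanish, so only the six off-diagonal $P_{ij}$ survive. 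Expanding the product of the two brackets as the $3\times 3$ determinant of the matrix $(P_{ij})$ with vanishing diagonal yields the single relation
\[ ST = P_{12}P_{23}P_{31} + P_{21}P_{32}P_{13} \]
(up to a sign fixed by the determinant convention). A dimension count, $\dim(R\otimes R \otimes R) = 15$ and $\dim SL_3(\C) = 8$, gives $\dim R_3(sl_3(\C)) = 7$, matching eight generators modulo one relation; this confirms the relation ideal is principal, so $R_3(sl_3(\C)) \cong \C[S,T,P_{12},\ldots,P_{13}]/(ST - P_{12}P_{23}P_{31} - P_{21}P_{32}P_{13})$.

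Next I would fix a term order whose leading term on the relation is $P_{12}P_{23}P_{31}$, which is precisely one of the three toric degenerations recorded above. A vector-space basis of $R_3(sl_3(\C))$ is then given by the standard monomials $S^s T^t \prod_{i \neq j} P_{ij}^{p_{ij}}$ with $\min(p_{12}, p_{23}, p_{31}) = 0$. Reading the multidegree off the map $\partial$, the dimension of $(V(\alpha) \otimes V(\beta) \otimes V(\gamma))^{sl_3(\C)}$ equals the number of exponent vectors $(s, t, (p_{ij})) \in \Z_{\geq 0}^8$ satisfying the six linear equations $\alpha_1 = s + p_{12} + p_{13}$, $\alpha_2 = t + p_{21} + p_{31}$ and the analogous equations for $\beta$ and $\gamma$, subject to $\min(p_{12}, p_{23}, p_{31}) = 0$. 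Summing the three $x$-equations against the three $y$-equations recovers the congruence $(\alpha_1 + \beta_1 + \gamma_1) - (\alpha_2 + \beta_2 + \gamma_2) = 3(s - t)$, the expected condition for the triple invariant to be nonzero; the solution set over a fixed $(\alpha, \beta, \gamma)$ is a one-dimensional lattice polytope.

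Finally I would construct an explicit affine-linear bijection between these standard exponent vectors and the BZ triangles $X$ with $\partial(X) = (\alpha, \beta, \gamma)$, sending $S, T$ to the two "triangle" counts encoded by the corner weights and the six contractions $P_{ij}$ to the six hexagon weights $a_2, a_3, b_2, b_3, c_2, c_3$. The balancing conditions $a_2 + a_3 = b_3 + c_2$, $b_2 + b_3 = c_3 + a_2$, $c_2 + c_3 = a_3 + b_2$ (only two of which are independent, since they sum to an identity) correspond to the two independent linear dependencies among the degree equations, and the standardness condition $\min(p_{12}, p_{23}, p_{31}) = 0$ matches nonnegativity of the remaining corner weight. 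The main obstacle is exactly this last step: verifying that the map preserves $\Z_{\geq 0}$ in both directions, is compatible with $\partial$, and is a genuine bijection on lattice points. This is a finite but delicate piece of polyhedral bookkeeping, and once it is carried out the theorem follows from the equality of the two lattice-point counts. (An alternative that avoids the explicit bijection is to compute the multigraded Hilbert series of the hypersurface $R_3(sl_3(\C))$ and compare it with the generating function of the BZ semigroup, but the direct bijection is more in keeping with the semigroup methods developed here.)
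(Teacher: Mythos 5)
You are right that the paper contains no proof to compare against: the statement is quoted from \cite{BZ1}, and the structural facts surrounding it (the presentation of $R_3(sl_3(\C))$ and its degeneration to $\C[BZ_3]$) are themselves imported from \cite{M5} and \cite{MZ}. Your first two steps are sound and give a genuinely self-contained route to that presentation: the First Fundamental Theorem plus the Reynolds operator does show that $S, T$ and the off-diagonal $P_{ij}$ generate $R_3(sl_3(\C))$ with $P_{ii}=0$, the determinant identity gives the cubic relation, primality of the relation ideal together with the dimension count $15-8=7$ forces that ideal to be principal, and since a principal ideal is its own Gr\"obner basis for any term order, the standard monomials with $\min(p_{12},p_{23},p_{31})=0$ do form a basis of each multigraded component. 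This correctly reduces the theorem to a purely combinatorial claim: standard exponent vectors of multidegree $(\alpha,\beta,\gamma)$ biject with $BZ_3(\alpha,\beta,\gamma)$.

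The genuine gap is that the dictionary you sketch for this last bijection is wrong, and it is exactly the step that carries the theorem, not routine bookkeeping. Sending each $P_{ij}$ to a single hexagon weight cannot work: a weighting supported on one hexagon entry (say $a_2=1$, all else $0$) violates the hexagon conditions ($a_2+a_3=1$ but $b_3+c_2=0$), so the images of your generators would not even lie in $BZ_3$; and sending $S,T$ to corner data is incompatible with $\partial$, since $S$ carries weight $(1,0)$ on all three edges while a corner weight contributes only to its two incident edges. The correct correspondence is asymmetric. The three corner weights are \emph{free} generators of $BZ_3$ and correspond to $P_{21}, P_{32}, P_{13}$ (the corner-$1$ triangle has $\partial = ((1,0),(0,0),(0,1))$, which is $\partial(P_{13})$), while $S$, $T$, $P_{12}$, $P_{23}$, $P_{31}$ go to the five hexagon-supported generators $a_2+b_2+c_2$, $a_3+b_3+c_3$, $a_2+b_3$, $b_2+c_3$, $c_2+a_3$; consequently the hexagon entries of the image triangle are the \emph{sums} $a_2 = s+p_{12}$, $b_3 = t+p_{12}$, etc., not the exponents themselves, and the inverse map is $s=\min(a_2,b_2,c_2)$, $t=\min(a_3,b_3,c_3)$, $p_{12}=a_2-s,\ldots$, with the hexagon conditions being precisely what makes this consistent. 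Under this dictionary the unique semigroup relation is $\mathcal{S}\mathcal{T}=\mathcal{P}_{12}\mathcal{P}_{23}\mathcal{P}_{31}$ (the product $\mathcal{P}_{21}\mathcal{P}_{32}\mathcal{P}_{13}$ of corner elements is \emph{not} identified with $\mathcal{S}\mathcal{T}$), which is what matches your standardness condition $\min(p_{12},p_{23},p_{31})=0$. Relatedly, your assertion that the hexagon conditions reflect ``two independent linear dependencies among the degree equations'' is false: the six degree equations are linearly independent, and the two-dimensional ambiguity lives in the kernel of the degree map on exponents (the fiber direction), not in relations among the equations. So the reduction is correct, but the concluding bijection must be rebuilt on the asymmetric correspondence above; as proposed, it fails before any lattice-point counting begins.
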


Triangles for a fixed  $(\alpha, \beta, \gamma)$ are all related to each other by adding or subtracting multiples of the triangle in Figure \ref{fig:BZminimal}.

\begin{figure}[htbp]
\centering
\includegraphics[scale = 0.35]{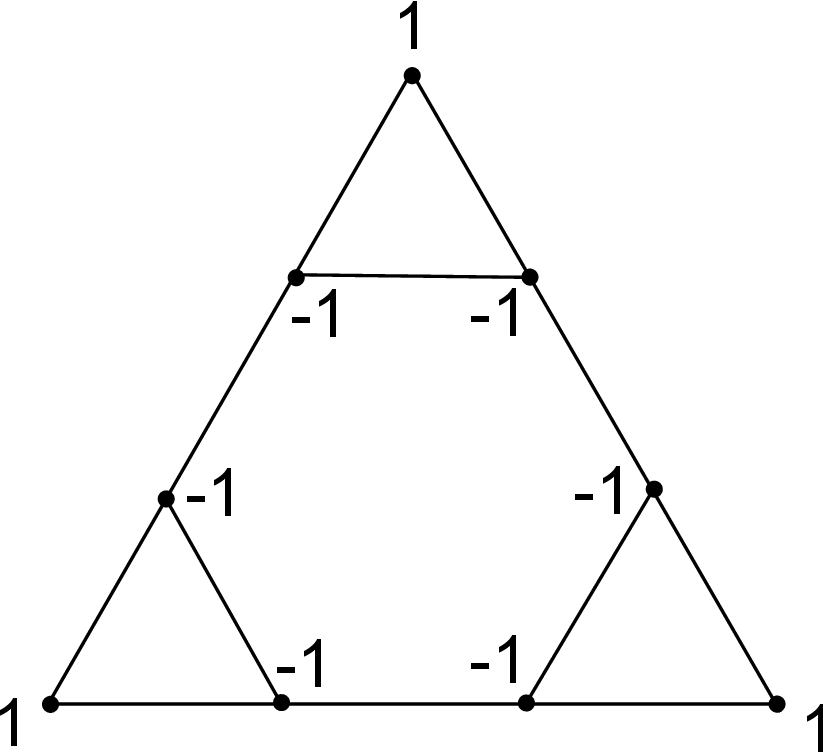}
\caption{}
\label{fig:BZminimal}
\end{figure} 

\begin{remark}
For two lattice cones $P \subset \R^n,$  $Q \subset \R^m,$ and a map $\pi: P \to Q,$
a collection of lattice points $\mathcal{M} \subset \Z^n$ is known as a Markov basis
for $\pi$ if the lattice points in any fiber $\pi^{-1}(b)$, $b \in Q$ can all be connected by
elements of $\mathcal{M}.$   The triangle above and its inverse therefore constitute a Markov basis for $\partial.$ 
\end{remark}

\begin{definition}
Define $Q_{min}(\alpha, \beta, \gamma) \in BZ_3(\alpha, \beta, \gamma)$ 
to be the unique triangle with some corner entry equal to $0.$
\end{definition}

It follows easily that all other members of the set $BZ_3(\alpha, \beta, \gamma)$ are obtained from $Q_{min}(\alpha, \beta, \gamma)$ by successive applications of the triangle in Figure \ref{fig:BZminimal}.  This element can be applied until some entry in the internal hexagon is $0$. 

\begin{remark}
It follows that the triples $\alpha, \beta, \gamma$ for which  $|BZ_3(\alpha, \beta, \gamma)| = 1$  are those which admit a triangle with a $0$ corner and a $0$ hexagon weight. 
\end{remark}

The semigroup $BZ_3$ is generated by the eight elements depicted in Figure \ref{fig:BZ2}, these elements have been represented by their honeycomb graphs. Honeycombs are a graphical device invented by Knutson, Tao, and Woodward \cite{KTW} to make the combinatorics of BZ triangles more transparent.  Each weight $a$ along the interior hexagon is replaced with an edge connected to the center of the triangle weighted by $a.$

\begin{figure}[htbp]
\centering
\includegraphics[scale = 0.3]{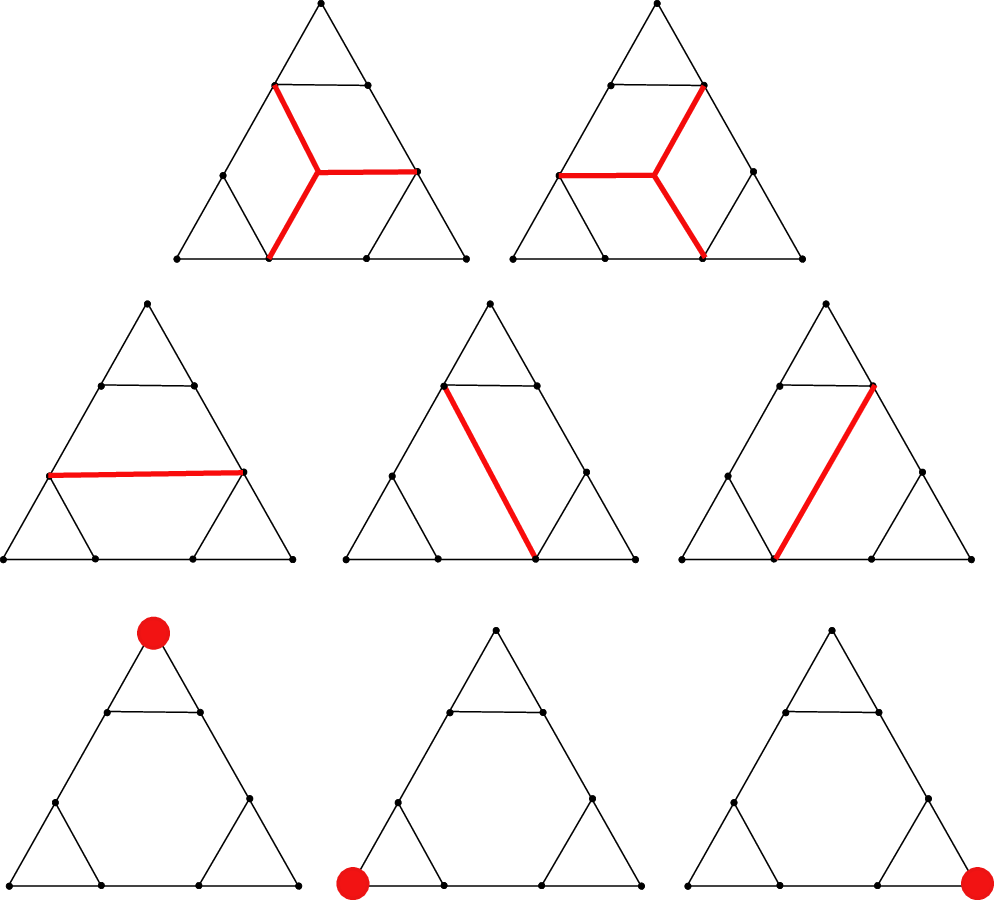}
\caption{}
\label{fig:BZ2}
\end{figure} 

From left to right, top down these are $\mathcal{S}, \mathcal{T}, \mathcal{P}_{12}, \mathcal{P}_{23}, \mathcal{P}_{31}, \mathcal{P}_{21}, \mathcal{P}_{32}, \mathcal{P}_{13}.$  These elements are subject to the familiar relation

\begin{equation}
\mathcal{S}\mathcal{T} =  \mathcal{P}_{12} \mathcal{P}_{23} \mathcal{P}_{31}.\\
\end{equation} 

We factor $Q_{min}(\alpha, \beta, \gamma)$ in the unique way which prefers $\mathcal{S}\mathcal{T}$ to $\mathcal{P}_{12} \mathcal{P}_{23} \mathcal{P}_{31}.$ 

\begin{equation}
Q_{min}(\alpha, \beta, \gamma) = \mathcal{S}^{s(min)}\mathcal{T}^{t(min)} \prod \mathcal{P}_{ij}^{p_{ij}(min)}\\
\end{equation}

\noindent
Letting $k = min\{t(min), s(min)\},$ the number of triangles in $BZ_3(\alpha, \beta, \gamma)$ is then equal to $k+1.$   We now relate these two algebraic objects by using $BZ_3$ to construct a presentation of $R_3(sl_3(\C)).$  The following appears in \cite{M5} and \cite{MZ}.

\begin{theorem}\label{br3}
The algebra $R_3(sl_3(\C))$ carries a $T^3-$invariant flat degeneration to the semigroup algebra $\C[BZ_3].$
\end{theorem}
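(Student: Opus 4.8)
The plan is to construct an explicit degeneration by building a valuation (or equivalently a filtration) on $R_3(sl_3(\C))$ whose associated graded algebra is the semigroup algebra $\C[BZ_3]$, and then to verify that the whole construction is compatible with the $T^3$-action. The starting point is the multiplicity-free decomposition $R = \bigoplus_{\alpha} V(\alpha)$ of the Cox ring of $SL_3(\C)/B$, from which $R_3(sl_3(\C)) = [R \otimes R \otimes R]^{sl_3(\C)}$ inherits a multigrading by triples $(\alpha,\beta,\gamma)$, with graded piece $(V(\alpha)\otimes V(\beta)\otimes V(\gamma))^{sl_3(\C)}$. By the Berenstein--Zelevinsky theorem quoted above, the dimension of this piece equals $|BZ_3(\alpha,\beta,\gamma)|$, so the two algebras already have matching multigraded Hilbert functions; the task is to make this numerical coincidence into an algebra isomorphism of associated graded objects.

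First I would define a total order (a term order refining the multigrading) on the set of BZ triangles, using the factorization of $Q_{min}(\alpha,\beta,\gamma)$ into the generators $\mathcal{S},\mathcal{T},\mathcal{P}_{ij}$ together with the one-parameter family generated by the minimal triangle of Figure \ref{fig:BZminimal}. Concretely, each lattice point of $BZ_3(\alpha,\beta,\gamma)$ is obtained from $Q_{min}$ by adding a nonnegative multiple of the minimal Markov element, so the fiber is linearly ordered; combining this with an ordering on the $\partial$-values gives a term order on all of $BZ_3$. I would then use this order to define a filtration on $R_3(sl_3(\C))$ by choosing, inside each multiplicity space $(V(\alpha)\otimes V(\beta)\otimes V(\gamma))^{sl_3(\C)}$, a basis indexed by the triangles in $BZ_3(\alpha,\beta,\gamma)$ that is adapted to this order. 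The natural source for such a basis is standard monomial theory / the honeycomb description of Knutson--Tao--Woodward: products of the invariants corresponding to the generators $\mathcal{S},\mathcal{T},\mathcal{P}_{ij}$ span the invariant spaces, and the BZ relation $\mathcal{S}\mathcal{T} = \mathcal{P}_{12}\mathcal{P}_{23}\mathcal{P}_{31}$ is the unique straightening relation.

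The core computation is then to show that under this filtration the structure constants degenerate to those of $\C[BZ_3]$: that is, the product of two invariants, expressed in the adapted basis, has leading term equal to the product of the corresponding BZ triangles (their sum in the semigroup $BZ_3$), with all other terms strictly lower in the term order. This is where the single relation $\mathcal{S}\mathcal{T} = \mathcal{P}_{12}\mathcal{P}_{23}\mathcal{P}_{31}$ does all the work: I expect that $\C[BZ_3]$ is the quotient of a polynomial ring in the eight generators by exactly this binomial, and that the same generators (as actual $sl_3$-invariants) generate $R_3(sl_3(\C))$ with $\mathcal{S}\mathcal{T} - \mathcal{P}_{12}\mathcal{P}_{23}\mathcal{P}_{31}$ as the lowest-order part of the genuine relation among them. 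A SAGBI/standard-monomial argument, checking that the chosen leading terms generate the initial algebra and that no new relations appear (e.g.\ by a dimension count via the Hilbert function equality above, or by verifying the initial ideal is prime/toric), then yields the flat degeneration.

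The main obstacle I anticipate is the compatibility and flatness bookkeeping rather than any deep new idea. Specifically, one must (i) exhibit a genuine algebra basis of $R_3(sl_3(\C))$ indexed bijectively by $BZ_3$ and verify that the leading terms of products are additive in the semigroup, and (ii) confirm that the filtration is $T^3$-invariant, so that the degeneration respects the three factors of the maximal torus acting on the three tensor slots. The $T^3$-invariance should be automatic once the generators $\mathcal{S},\mathcal{T},\mathcal{P}_{ij}$ are chosen as weight vectors, since the $\partial$-map records exactly the $T^3$-weights. Flatness follows in the standard way from the fact that the special and general fibers have equal Hilbert functions (guaranteed by the Berenstein--Zelevinsky count) together with the semicontinuity built into a term-order/Rees-algebra degeneration. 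Since the result is already attributed to \cite{M5} and \cite{MZ}, I would lean on those references for the detailed verification and present the argument at the level of identifying the term order, the adapted basis, and the single toric relation.
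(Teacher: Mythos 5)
Your proposal takes essentially the same route as the paper: both treat the eight invariants $S, T, P_{ij}$ as a subduction (SAGBI-type) basis, present $R_3(sl_3(\C))$ by a single cubic relation whose binomial initial form cuts out $\C[BZ_3]$, get flatness from the Hilbert-function match supplied by the Berenstein--Zelevinsky count, obtain $T^3$-invariance because the generators are weight vectors, and defer the detailed verification, exactly as the paper does, to \cite{M5} and \cite{MZ}. The one ingredient the paper makes explicit that you only posit is the relation itself: expanding $\det(X)\det(Y^t) = \det(XY^t)$ modulo $P_{ii} = 0$ gives $ST - P_{12}P_{23}P_{31} + P_{21}P_{32}P_{13} = 0$, whose two binomial degenerations are precisely $\C[BZ_3]$ and $\C[CB_3]$.
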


Recall that we have fixed the bases of $V(1,0)$ and $V(0, 1)$ to be $x_1, x_2, x_3$ and $y_1, y_2, y_3$ respectively. We let $S$ be the unique invariant in $V(1, 0)^{\otimes 3},$ and $T$ be the unique invariant in $V(0, 1)^{\otimes 3}.$ These are the determinants of the matrices $X$ and $Y$ below, respectively.  Let $P_{ij}$ be the inner product of the $i-th$ column of $X$ with the $j-$th column of $Y$, this represents the invariant in $V(1,0) \otimes V(0, 1)$ where these representations are in the $i, j$ positions in the tensor product, respectively.  

$$
\begin{array}{c|ccc|cc|ccc|}
& x_1^1 & x_2^1 & x_3^1 & &  & y_1^1 & y_2^1 & y_3^1\\
X =& x_1^2 & x_2^2 & x_3^2 & &Y =  & y_1^2 & y_2^2 & y_3^2\\
& x_1^3 & x_2^3 & x_3^3 & &  & y_1^3 & y_2^3 & y_3^3 
\end{array}
$$

\noindent
These are the unique invariant forms in their corresponding tensor products, and therefore they must correspond to the eight generators of $BZ_3.$

Next we make use of the concept of a $subduction$ basis for a valuation on the commutative algebra $R_3(sl_3(\C)).$

\begin{definition}
Let $A$ be a commutative algebra over a field $\mathfrak{k},$ with a valuation $\mathfrak{v}: A \to \R\cup \{-\infty\}$ which restricts to the trivial valuation on $\mathfrak{k}.$  This defines a filtration on $A$, let $in_{\mathfrak{v}}(A)$ be the associated graded algebra. We say a set $X \subset A$ is a subduction basis for $(A, \mathfrak{v})$ if the initial forms $in_{\mathfrak{v}}(X)$ generate $in_{\mathfrak{v}}(A).$
\end{definition}

If the valuation $\mathfrak{v}$ is sufficiently nice, for example if the dimension of
the space of elements of $A_{\leq r}$ with value less than or equal to a fixed $r$ is finite, then the existence of finite subduction bases can be a powerful tool for studying $A$ (this happens for all algebras and valuations we consider).  In particular, $X$ generates $A$ as a $\mathfrak{k}$ algebra, and it is possible to construct a presentation $gr_{\mathfrak{v}}(A) = \mathfrak{k}[X]/ in_{\mathfrak{v}}(I),$ where $I$ is the ideal of presentation of $A$ by $X \subset A,$ see \cite{M5}, \cite{Ka}.  This is essentially the subduction algorithm and the "syzygy lifting" property of SAGBI bases, see \cite{St}, Chapter $11.$

  It follows that $S, T$ and the $P_{ij}$ are a subduction basis for $R_3(sl_3(\C))$ with respect to the filtration defined in \cite{M5}. This implies that $S, T, P_{ij}$ generate $R_3(sl_3(\C)),$ and that the binomial relation defining $BZ_3$ lifts to generate the presentation ideal of $R_3(sl_3(\C))$ by $S, T, P_{ij}$.  

\begin{equation}
R_3(sl_3(\C)) = \C[S, T, P_{12}, P_{13}, P_{23}, P_{21}, P_{31}, P_{32}]/< ST - P_{12}P_{23}P_{31} + P_{21}P_{32}P_{13}>\\ 
\end{equation}

 This relation can be constructed by writing the determinant of the product of two $3\times 3$  matrices, $X, Y$ as either $ST = det(X)det(Y^t)$ or $det(XY^t) =  P_{12}P_{23}P_{31} -  P_{21}P_{32}P_{13} + h,$   where $h$ involves the terms $P_{ii},$ $i \in \{1, 2, 3\}.$ Recall that all of these terms vanish as the defining relation of $R.$  Notice that the generator of this ideal has three binomial degenerations, two of which are isomorphic.  One gives a presentation of $\C[BZ_3],$ and the other defines a semigroup algebra for a semigroup $CB_3.$ 

\begin{equation}
\C[CB_3] =  \C[S, T, P_{12}, P_{13}, P_{23}, P_{21}, P_{31}, P_{32}]/< P_{12}P_{23}P_{31} - P_{21}P_{32}P_{13}>\\ 
\end{equation}

\noindent
Elements of the semigroup $CB_3$ can be represented by the $bird$ $feet$ diagrams depicted in Figure 
\ref{fig:birdfeet}. The
relation can be seen as the fact that the product of the elements in the second column equals
the product of the elements in the third column.

\begin{figure}[htbp]
\centering
\includegraphics[scale = 0.35]{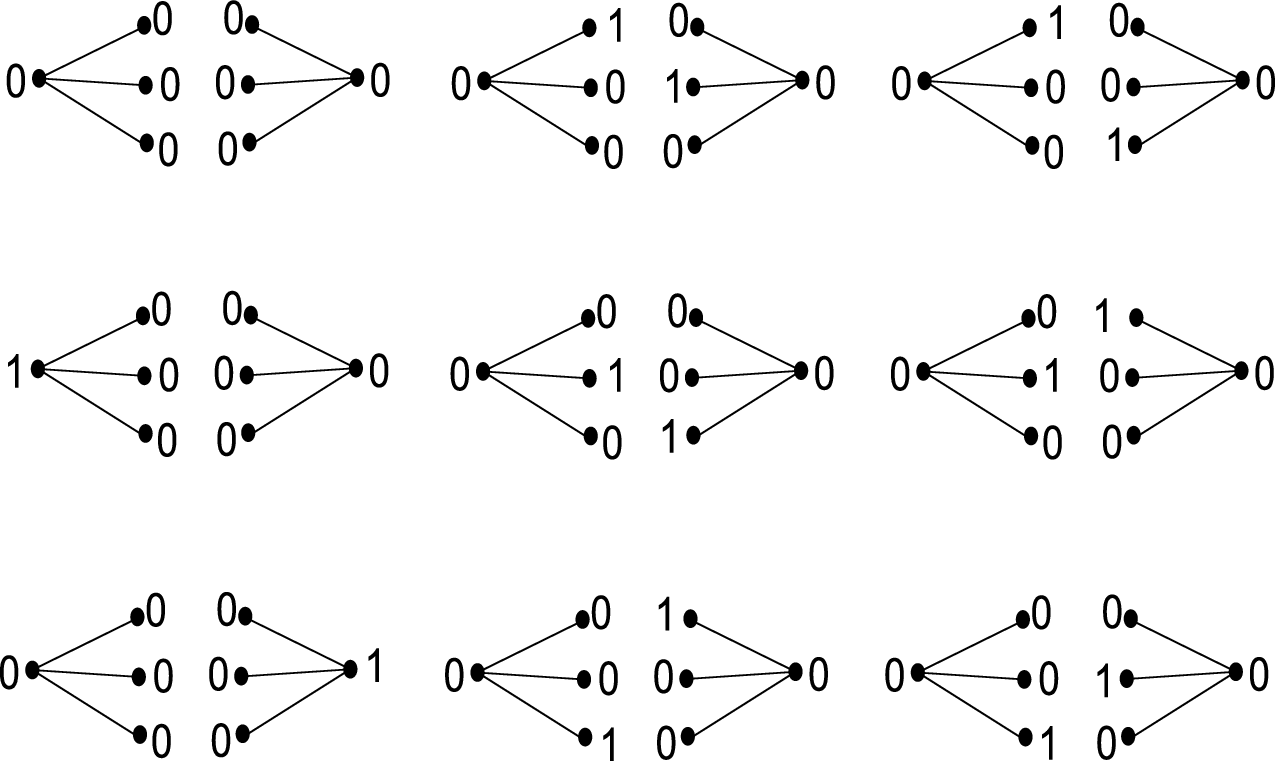}
\caption{}
\label{fig:birdfeet}
\end{figure}

Later we will relate $CB_3$ and $BZ_3$ to their "quantum" analogues
$CB_3^* \cong BZ_3^*,$ both obtained by introducing an extra generator $X.$

\section{Valuations on the algebra $R_3(sl_3(\C))$}\label{valsec}

In this section we define a valuation on the branching algebra
$R_3(sl_3(\C))$ which will help us construct and study the algebra
of $SL_3(\C)$ conformal blocks.  This function is built out of 
representation data for the subalgebra $sl_2(\C) \subset sl_3(\C)$
defined by the longest root $\theta = L_1 - L_3.$ First we decompose each irreducible representation of $sl_3(\C)$ into its $sl_2(\C)$ isotypical components with respect
to this root.

\begin{equation}
V(\alpha) = \bigoplus W_{\alpha, i}\\
\end{equation}

\begin{equation}
R^{\otimes 3} = \bigoplus W_{\alpha, i} \otimes W_{\beta, i} \otimes W_{\gamma, k}\\ 
\end{equation}

This gives a $branching$ $decomposition$ of the algebra $R^{\otimes 3}.$   
 For any map of groups $\phi:\mathfrak{h} \to \mathfrak{g},$ there is a filtration of the analogue of $R$ for the algebra $\mathfrak{g},$ $R_{\mathfrak{g}} = \bigoplus_{\lambda} V(\lambda)$ by its $\mathfrak{h}-$isotypical decomposition, see \cite{M5} and \cite{Gr}.   It follows that multiplication in the algebra $R^{\otimes 3}$ is lower-triangular with respect to the indices $i, j, k.$  This can be understood by considering the product of two such isotypical components.  Since the product map is $sl_2(\C)-$linear the image of the product must decompose as follows. 

\begin{equation}
[W_{\alpha_1, i_1}\otimes W_{\beta_1, j_1}\otimes W_{\gamma_1, k_1}] \times 
[W_{\alpha_2, i_2}\otimes W_{\beta_2, j_2}\otimes W_{\gamma_2, k_2}] \subset
\end{equation}
 $$\bigoplus_{i, j, k} [W_{\alpha_1 + \alpha_2, i}\otimes W_{\beta_1 + \beta_2, j} \otimes W_{\gamma_1 + \gamma_2, k}]$$

$$i_1 + i_2 \geq i, j_1 + j_2 \geq j, k_1 + k_2 \geq k$$

\noindent
Furthermore, a product $f\times g$ always has a component with indices $i = i_1 + i_2, j = j_1 + j_2, k = k_1 + k_2,$ \cite{M5}.  This implies that the function $v_{\theta},$ which assigns $\sum f_{\alpha, \beta, \gamma, i, j, k} = f \in R^{\otimes 3}$ the number $\frac{1}{2}max\{\ldots, i + j + k, \ldots\},$ is a valuation on $R^{\otimes 3}.$  Another construction of this type of filtration by dominant weight data appears in \cite{Gr}, $3.15.2$. From these observations we get the following proposition.

\begin{proposition}\label{val}
The function $v_{\theta}$ defines a valuation on the algebra $R_3(sl_3(\C)) \subset R^{\otimes 3}.$
\end{proposition}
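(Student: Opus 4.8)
The plan is to verify the three defining axioms of a valuation for the function $v_\theta$ restricted to the subalgebra $R_3(sl_3(\C)) = [R\otimes R\otimes R]^{sl_3(\C)}$, leveraging the fact that $v_\theta$ has already been established as a valuation on the ambient algebra $R^{\otimes 3}$. First I would recall the three conditions: for a valuation $\mathfrak{v}$ we need $\mathfrak{v}(f+g) \leq \max\{\mathfrak{v}(f), \mathfrak{v}(g)\}$, $\mathfrak{v}(fg) = \mathfrak{v}(f) + \mathfrak{v}(g)$, and $\mathfrak{v}(cf) = \mathfrak{v}(f)$ for nonzero scalars $c$ (with the convention $\mathfrak{v}(0) = -\infty$). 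The additive and scalar properties are inherited immediately from the ambient valuation on $R^{\otimes 3}$, since the formula $v_\theta(f) = \tfrac12 \max\{\ldots, i+j+k, \ldots\}$ is computed identically whether or not $f$ happens to lie in the invariant subalgebra. So the content of the proposition reduces to the multiplicativity axiom.

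The key observation for multiplicativity is the product decomposition displayed just before the statement: for isotypical components, the product lands in the sum of components with $i \leq i_1 + i_2$, $j \leq j_1 + j_2$, $k \leq k_1 + k_2$, and crucially the top component with indices exactly $(i_1+i_2, j_1+j_2, k_1+k_2)$ is always nonzero. This gives $v_\theta(fg) = v_\theta(f) + v_\theta(g)$ on all of $R^{\otimes 3}$. The only subtlety in passing to $R_3(sl_3(\C))$ is ensuring that the top-index component of a product of two \emph{invariants} is still nonzero, i.e.\ that the $sl_3(\C)$-invariant subspace does not accidentally kill the maximizing term. The cleanest way I would argue this is to note that $v_\theta$ as defined is constant on $sl_3(\C)$-isotypical pieces and hence descends compatibly to the invariant subalgebra: taking invariants is an exact functor that commutes with the filtration, so the associated graded of the invariant subalgebra is the invariant subalgebra of the associated graded. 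Since the leading term of a product of invariants is itself an invariant (the whole product map is $sl_3(\C)$-equivariant) and is nonzero in $\gr(R^{\otimes 3})$, it remains nonzero after taking invariants.

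The main obstacle I anticipate is precisely justifying that the nonvanishing of the top-degree term survives restriction to invariants — one must rule out the possibility that the maximizing component $W_{\alpha_1+\alpha_2,\,i}\otimes W_{\beta_1+\beta_2,\,j}\otimes W_{\gamma_1+\gamma_2,\,k}$ carries the extremal value only on a non-invariant vector. I would address this by appealing to the structure already cited from \cite{M5}: the filtration by the $sl_2(\C)\subset sl_3(\C)$ isotypical indices is multiplicative with a nonzero leading term, and the leading-term map $\gr(R^{\otimes 3}) \to \gr(R^{\otimes 3})$ is a morphism of $sl_3(\C)$-algebras. Restricting to the invariant subalgebra, which sits inside as a direct summand compatible with the grading, preserves both the filtration and the existence of nonzero leading terms. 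Hence $v_\theta|_{R_3(sl_3(\C))}$ inherits multiplicativity, and together with the inherited additive and scalar properties this establishes that $v_\theta$ is a valuation on $R_3(sl_3(\C))$.
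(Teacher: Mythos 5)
Your proof reaches the right conclusion, and its logical core is exactly the paper's: the paper derives this proposition directly from the two observations you also use (lower-triangularity of multiplication in the $sl_2(\theta)$-isotypical indices, and the nonvanishing of the component in the top index $(i_1+i_2,\, j_1+j_2,\, k_1+k_2)$, quoted from \cite{M5}), which make $v_\theta$ a valuation on all of $R^{\otimes 3}$; the proposition then follows simply by restricting to the subalgebra. However, the ``subtlety'' that occupies the second half of your proposal is not actually there. The valuation axioms are universally quantified over pairs of elements of $R^{\otimes 3}$: for invariant $f, g$, the product $fg$ is the very same element of $R^{\otimes 3}$ whether you regard it inside $R_3(sl_3(\C))$ or not, and $v_\theta(fg)$ is computed from the same isotypical decomposition in either case, so $v_\theta(fg) = v_\theta(f) + v_\theta(g)$ for invariants is literally a special case of the ambient statement. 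Nothing gets ``killed'' by passing to invariants, because taking invariants is not an operation applied to the product; it only restricts which pairs $(f,g)$ you consider.

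Be aware, moreover, that the machinery you invoke to dispatch this non-issue contains claims that are false as stated: $v_\theta$ is \emph{not} constant on $sl_3(\C)$-isotypical pieces, and the filtration $\{\,f : v_\theta(f) \leq r\,\}$ is \emph{not} stable under the diagonal $sl_3(\C)$-action. For instance, the root vector for $L_1 - L_2$ sends $x_2$, which spans the trivial $sl_2(\theta)$-component of $V(1,0)$ (index $i = 0$), to $x_1$, which lies in the $i = 1$ component, so acting diagonally raises the $v_\theta$-level. Consequently there is no induced $sl_3(\C)$-action on $\gr_{v_\theta}(R^{\otimes 3})$, and the identity ``$\gr$ of invariants $=$ invariants of $\gr$'' cannot be obtained by the exactness argument you sketch. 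Fortunately none of this is needed: delete the detour, and what remains is precisely the paper's proof.
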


\noindent

We note that the method of filtering an algebra by dominant weight data is used to prove Theorem \ref{m4} in \cite{M4}. As a consequence of the above proposition, it follows that $v_{\theta}(fg) = v_{\theta}(f) + v_{\theta}(g),$ $ie$ if $f$ has highest component of weight $(i_1 , j_1 , k_1),$ and $g$ has highest component of weight $(i_2 , j_2 , k_2)$ then $fg$ has highest component of weight $(i_1 + i_2 ,  j_1 + j_2 , k_1 + k_2).$  This is useful for studying conformal blocks because of the following (reformulated) theorem of Ueno. 

\begin{theorem}\label{ueno}[Ueno, \cite{U}]
The space of $sl_3(\C)$ conformal blocks of level $L$ and weights $\alpha^*, \beta^*, \gamma^*$ can be identified with $V_L(\alpha^*, \beta^*, \gamma^*) = \{f \in (V(\alpha) \otimes V(\beta) \otimes V(\gamma))^{sl_3(\C)} | v_{\theta}(f) \leq L\}$
\end{theorem}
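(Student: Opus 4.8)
This is a reformulation of Proposition $3.5.1$ of \cite{U}, so the task is to match two descriptions of $V_{0,3}(\alpha^*, \beta^*, \gamma^*, L)$. The plan is first to recall that in genus $0$ the conformal block space sits inside the finite-dimensional invariants $(V(\alpha) \otimes V(\beta) \otimes V(\gamma))^{sl_3(\C)}$, and that Ueno's proposition cuts out this subspace by the vanishing of a family of expressions built from the raising operator $e_\theta$ of the longest root $\theta = L_1 - L_3$ acting on the separate tensor factors (the $L = 1$ instance is displayed in the Corollary above). Since these equations involve only $e_\theta$, the entire level content is governed by the long-root copy $sl_2(\C)_\theta \subset sl_3(\C)$ generated by $e_\theta, f_\theta, h_\theta$.

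The conceptual engine I would use is the observation that the affine simple root of $\widehat{sl_3(\C)}$ is $\alpha_0 = \delta - \theta$, which is simultaneously the affine simple root of the subalgebra $\widehat{sl_2(\C)}_\theta$. Because $\theta$ is a long root, this inclusion of affine algebras preserves the level, so the integrability condition defining a level-$L$ block is detected entirely inside $\widehat{sl_2(\C)}_\theta$ and collapses to the classical $sl_2(\C)$ quantum Clebsch--Gordan inequality. Concretely, I would decompose each factor into its $sl_2(\C)_\theta$-isotypical pieces $V(\alpha) = \bigoplus_i W_{\alpha, i}$, expand $f = \sum_{i,j,k} f_{i,j,k}$ with $f_{i,j,k} \in W_{\alpha, i} \otimes W_{\beta, j} \otimes W_{\gamma, k}$, and note that since $f$ is already $sl_2(\C)_\theta$-invariant, the triangle and parity constraints hold automatically on each nonzero summand; the only extra requirement imposed by level $L$ is $i + j + k \leq 2L$. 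As $v_\theta(f) = \tfrac{1}{2}\max\{i + j + k : f_{i,j,k} \neq 0\}$ by construction, this yields exactly $f \in V_L(\alpha^*, \beta^*, \gamma^*)$ if and only if $v_\theta(f) \leq L$.

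The step I expect to be the main obstacle is making rigorous the passage from Ueno's position-dependent equations --- which mix the marked points $p_j$ with binomial weights --- to the clean representation-theoretic inequality $i + j + k \leq 2L$ on the $sl_2(\C)_\theta$-content. I would handle this by using the $PGL_2(\C)$-invariance of genus $0$ conformal blocks to normalize the three points, after which the surviving relations are precisely the $\theta$-string relations that grade the tensor product by $sl_2(\C)_\theta$-isotype; identifying these with the level constraint of the embedded $\widehat{sl_2(\C)}_\theta$ is then the heart of the matter, and is where the compatibility of the valuation $v_\theta$ of \cite{M5} with the conformal level is actually verified.
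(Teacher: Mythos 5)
Your proposal is a plan rather than a proof: the step you yourself flag as ``the heart of the matter'' --- converting the position-dependent equations of Proposition $3.5.1$ of \cite{U} (with the $(p_j - p_k)^{-m_j}$ binomial weights) into the clean isotypic inequality $i + j + k \leq 2L$ --- is never actually carried out. The conceptual picture you invoke in its place (the level-preserving embedding of the affine algebra of $sl_2(\C)_\theta$ into $\widehat{sl_3(\C)}$ via $\alpha_0 = \delta - \theta$, collapsing integrability to the quantum Clebsch--Gordan rule) is indeed the representation-theoretic origin of depth-rule statements of this kind, but making it rigorous amounts to reproving the three-point $sl_2(\C)$ fusion rule together with the compatibility of the null-vector condition with restriction; that is a result on the same footing as the theorem being proved, not a lemma one may wave at. There is also a quieter gap: conformal blocks of weights $\alpha^*, \beta^*, \gamma^*$ are linear functionals on $V(\alpha^*) \otimes V(\beta^*) \otimes V(\gamma^*)$, so your direct manipulation of the components $f_{i,j,k}$ of $f$ inside $V(\alpha) \otimes V(\beta) \otimes V(\gamma)$ silently presupposes an identification of ``vanishing on high isotypic components of the dual'' with ``lying in low isotypic components,'' which needs an argument.

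The paper avoids both difficulties by citing Corollary $3.5.2$ of \cite{U} rather than Proposition $3.5.1$: that corollary already packages the level-$L$ condition as the vanishing of $f$, viewed as a function on $V(\alpha^*) \otimes V(\beta^*) \otimes V(\gamma^*) = \bigoplus W_i \otimes W_j \otimes W_k$, on every $sl_2(\C)_\theta$-isotypic component with $i + j + k > 2L$. The entire content of the paper's proof is then the dualization step you skipped: by self-duality of $sl_2(\C)$-representations, the functions dual to the component $W_i \otimes W_j \otimes W_k$ are precisely the corresponding component of $V(\alpha) \otimes V(\beta) \otimes V(\gamma)$, so vanishing on all components with $i + j + k > 2L$ is equivalent to lying in the span of those with $i + j + k \leq 2L$, i.e.\ to $v_\theta(f) \leq L$. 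If you want to salvage your route, the efficient repair is to replace Proposition $3.5.1$ by Corollary $3.5.2$ as your input; your isotypic bookkeeping plus the duality observation is then the whole proof, and the $PGL_2(\C)$ normalization and affine-embedding machinery can be discarded.
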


\begin{proof}
We translate from the result of Ueno, consider $V(\alpha) \otimes V(\beta) \otimes V(\gamma)$ as linear functions on the representation $V(\alpha^*) \otimes V(\beta^*) \otimes V(\gamma^*),$ and consider the isotypical decomposition of this representation,

\begin{equation}
V(\alpha^*) \otimes V(\beta^*) \otimes V(\gamma^*) = \bigoplus W_i \otimes W_j \otimes W_j,\\
\end{equation}

\noindent
then by \cite{U}, Corollary $3.5.2$ we have

\begin{equation}
V_L(\alpha^*, \beta^*, \gamma^*) \cong \{f |  i + j + k > 2L \Rightarrow f|_{W_i \otimes W_j \otimes W_k} = 0\}\\ 
\end{equation}

Consider the $sl_2(\C)$ isotypical decomposition of $[V(\alpha^*) \otimes V(\beta^*) \otimes V(\gamma^*)]^*.$

\begin{equation}
[V(\alpha^*) \otimes V(\beta^*) \otimes V(\gamma^*)]^* = [\bigoplus W_i \otimes W_j \otimes W_k ]^* = \bigoplus W_i \otimes W_j \otimes W_k\\
\end{equation}

This follows from the self-duality of $sl_2(\C)$ representations.  As a consequence, the dual functions of the $W_i\otimes W_j \otimes W_k$ component of $V(\alpha^*) \otimes V(\beta^*) \otimes V(\gamma^*)$ are given by the $W_i\otimes W_j \otimes W_k$ component of $V(\alpha) \otimes V(\beta) \otimes V(\gamma).$ A function $f$ vanishes on $all$ $W_i \otimes W_j \otimes W_k,$ $i + j + k > 2L$ if and only if it is in the span of the components with $i + j + k \leq 2L.$
\end{proof}

\begin{lemma}
The invariants $P_{ij},$ $S$, $T$ each have $v_{\theta}$ value $1.$
\end{lemma}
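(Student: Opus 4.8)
The plan is to compute, for each invariant, its $sl_2(\C)$-isotypical decomposition with respect to the $\theta$-root subalgebra and simply read off the largest index sum $i+j+k$ occurring with nonzero coefficient. Since $\theta = L_1 - L_3$, the relevant copy of $sl_2(\C) \subset sl_3(\C)$ has coroot $h_\theta = E_{11} - E_{33}$ and raising operator $e_\theta = E_{13}$, and the label $i$ in $W_{\alpha, i}$ is the highest $sl_2(\C)$-weight of that summand. First I would record the two fundamental decompositions. In $V(1,0) = \C^3$ the vectors $x_1, x_3$ have $h_\theta$-weights $+1, -1$ and span a standard module $W_1$, while $x_2$ has weight $0$ and spans a trivial module $W_0$, so $V(1,0) = W_{(1,0),1} \oplus W_{(1,0),0}$. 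Identically, in $V(0,1) = \bigwedge^2(\C^3)$ the weight-$(\pm 1)$ vectors span a $W_1$ and the weight-$0$ vector spans a $W_0$, giving $V(0,1) = W_{(0,1),1} \oplus W_{(0,1),0}$. In particular every $sl_2(\C)$-index appearing in either fundamental representation is at most $1$.

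For $S$, the antisymmetrizing invariant $\sum_{\sigma \in \mathfrak{S}_3} \mathrm{sgn}(\sigma)\, x_{\sigma(1)} \otimes x_{\sigma(2)} \otimes x_{\sigma(3)}$ in $V(1,0)^{\otimes 3}$, I would argue in two steps. The only a priori component of index sum $3$ sits in $W_1 \otimes W_1 \otimes W_1$; but $W_1^{\otimes 3}$ contains no $sl_2(\C)$-invariant (the trivial module occurs only in even tensor powers of the standard module), and since $S$ is $sl_3(\C)$-invariant, hence $sl_2(\C)$-invariant, its projection there vanishes. Next I would exhibit a nonzero index-$2$ component: collecting the terms of $S$ whose third factor is $x_2$ (the $W_0$-vector) yields $(x_3 \otimes x_1 - x_1 \otimes x_3) \otimes x_2$, a nonzero element of $W_{(1,0),1} \otimes W_{(1,0),1} \otimes W_{(1,0),0}$. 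Thus the maximal index sum is exactly $2$ and $v_\theta(S) = \tfrac12 \cdot 2 = 1$. The invariant $T$ is the identical antisymmetrization in the $y$-basis of $V(0,1)^{\otimes 3}$, so the same two steps give $v_\theta(T) = 1$ verbatim.

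Finally, each $P_{ij}$ is, up to placing a trivial representation in the remaining slot, the canonical copairing $\sum_k x_k \otimes y_k$ in $V(1,0) \otimes V(0,1)$. Splitting this according to the decompositions above, the terms $x_1 \otimes y_1 + x_3 \otimes y_3$ lie in $W_{(1,0),1} \otimes W_{(0,1),1}$ while $x_2 \otimes y_2$ lies in $W_{(1,0),0} \otimes W_{(0,1),0}$; the first piece is nonzero, so together with the trivial third slot (index $0$) the maximal index sum is $1 + 1 + 0 = 2$, whence $v_\theta(P_{ij}) = 1$. Because $v_\theta$ depends only on the multiset of indices, this value is independent of which positions carry $V(1,0)$ and $V(0,1)$.

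I expect the only point requiring real care to be the vanishing of the index-$3$ component for $S$ and $T$; the clean way to see it is the representation-theoretic fact that an odd tensor power of the $2$-dimensional $sl_2(\C)$-module carries no invariant vector, which pins the top index sum at $2$ rather than $3$, after which the explicit nonzero index-$2$ term shows the value is exactly, and not less than, $1$.
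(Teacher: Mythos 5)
Your proof is correct and follows essentially the same route as the paper: both decompose $V(1,0)$ and $V(0,1)$ under the $\theta$-copy of $sl_2(\C)$ as $V(1)\oplus V(0)$ (with $V(0)$ spanned by $x_2$, resp.\ $y_2$) and then read the index sums off the explicit formulas for $S$, $T$, and $P_{ij}$. The one place you diverge is a harmless detour: where you kill the index-$3$ component of $S$ and $T$ via the fact that $V(1)^{\otimes 3}$ has no $sl_2(\C)$-invariants, the paper simply notes that every monomial of the determinant contains $x_2$ (resp.\ $y_2$) exactly once, so that component is zero by inspection; conversely, you are more explicit than the paper about exhibiting a nonzero index-$2$ component, i.e.\ the lower bound $v_\theta \geq 1$.
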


\begin{proof}
We compute the decompositions of $V(0,1)$ and $V(1, 0)$ as $\theta(sl_2(\C)) \subset sl_3(\C)$ representations.  These are $V(1, 0) = V(1) \oplus V(0)$
with $V(0)$ the span of $x_2,$ and $V(0, 1) = V(1) \oplus V(0)$ with $V(0)$ the
span of $y_2.$   Each monomial in the expressions for $S, T,$ and $P_{ij}$ is then in a component with at most two copies of $V(1)$ in the tensor product. 
\end{proof}

\begin{proposition}
For $\alpha, \beta, \gamma$ fixed, the invariants in $V(\alpha)\otimes V(\beta) \otimes V(\gamma)$ take on distinct, consecutive $v_{\theta}$ values. 
\end{proposition}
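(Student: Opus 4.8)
The plan is to exhibit a basis of the invariant space whose $v_\theta$ values are visibly an interval of integers, each occurring exactly once. The natural candidate is the standard monomial basis supplied by the subduction basis $S, T, P_{ij}$ of $R_3(sl_3(\C))$: by the discussion following Theorem \ref{br3}, the monomials in $S, T, P_{ij}$ that are not divisible by the leading term of the relation $ST - P_{12}P_{23}P_{31} + P_{21}P_{32}P_{13}$ descend to a vector space basis of $R_3(sl_3(\C))$, and those lying in the $(\alpha,\beta,\gamma)$ graded piece are in bijection with the BZ triangles in $BZ_3(\alpha,\beta,\gamma)$.

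First I would record that for a single standard monomial $m = S^a T^b \prod P_{ij}^{c_{ij}}$ the valuation equals its degree. The preceding Lemma gives $v_\theta(S) = v_\theta(T) = v_\theta(P_{ij}) = 1$, and since $v_\theta(fg) = v_\theta(f) + v_\theta(g)$, we obtain $v_\theta(m) = a + b + \sum c_{ij}$, the total number of generator factors. In particular every standard monomial has a well-defined integer $v_\theta$ value equal to its degree, with no cancellation to worry about, because $m$ is a single product rather than a sum.

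Next I would use the chain structure of the fiber. By the Berenstein--Zelevinsky theorem and the remarks on $Q_{min}$, the elements of $BZ_3(\alpha,\beta,\gamma)$ are exactly $Q_{min} + j\Delta$ for $j = 0, 1, \ldots, k$, where $\Delta$ is the minimal triangle of Figure \ref{fig:BZminimal} and $k = \min\{s(min), t(min)\}$. The crucial point is to pin down the effect of $\Delta$ on degree. Writing $Q_{min} = S^{s}T^{t}\prod P_{ij}^{p_{ij}}$ in the form that favors $ST$, I claim adding $\Delta$ converts one factor $ST$ into $P_{21}P_{32}P_{13}$: indeed $\partial(ST) = \partial(P_{21}P_{32}P_{13})$, while in $BZ_3$ one has $ST = P_{12}P_{23}P_{31}$, so the only genuinely new fiber element over this weight is $P_{21}P_{32}P_{13}$, and it differs from $ST$ precisely by $\Delta$. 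Thus $Q_{min} + j\Delta = S^{s-j}T^{t-j}(P_{21}P_{32}P_{13})^{j}\prod P_{ij}^{p_{ij}}$, whose degree is $\deg(Q_{min}) + j$, valid exactly while $j \le \min\{s,t\} = k$. Combining with the previous paragraph, the $v_\theta$ values on this basis are $\deg(Q_{min}), \deg(Q_{min})+1, \ldots, \deg(Q_{min})+k$, which are distinct and consecutive, proving the proposition.

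The main obstacle I anticipate is this middle step: rigorously identifying the minimal triangle $\Delta$ with the degree-raising trade $ST \rightsquigarrow P_{21}P_{32}P_{13}$ and verifying that it raises the generator-degree by exactly one at every stage of the chain, rather than interacting with the $P_{ij}$ factors already present in $Q_{min}$. This is where the honeycomb bookkeeping and the precise shape of $Q_{min}$ (it has a zero corner, and $\Delta$ may be applied until a hexagon weight vanishes) must be handled with care; everything else is a formal consequence of $v_\theta$ being a multiplicative valuation together with the subduction-basis description of $R_3(sl_3(\C))$.
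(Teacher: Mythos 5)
Your proposal is correct, and its skeleton matches the paper's: both realize the chain $BZ_3(\alpha,\beta,\gamma)=\{Q_{min}+j\Delta\}_{j=0}^{k}$ by explicit monomials in $S,T,P_{ij}$ whose total degrees are consecutive integers, and then use $v_\theta(S)=v_\theta(T)=v_\theta(P_{ij})=1$ together with additivity of $v_\theta$ to read off the values. The genuine difference is how completeness of the list is certified. The paper trades $ST$ for $P_{12}P_{23}P_{31}$, setting $M_\ell=(ST)^{-\ell}(P_{12}P_{23}P_{31})^{\ell}M_{min}$; under the degeneration of Theorem \ref{br3} all of these map to the \emph{same} point $Q_{min}$ of $BZ_3$ (since $\mathcal{S}\mathcal{T}=\mathcal{P}_{12}\mathcal{P}_{23}\mathcal{P}_{31}$ there), so standard-monomial reasoning cannot apply to that family; instead the paper uses the elementary fact that elements with pairwise distinct valuation values are linearly independent, and then matches the count against the Berenstein--Zelevinsky dimension $k+1$. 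You trade $ST$ for $P_{21}P_{32}P_{13}$, which is the trade that genuinely moves along the fiber, so your monomials $S^{s-j}T^{t-j}(P_{21}P_{32}P_{13})^{j}\prod P_{ij}^{p_{ij}}$ hit the distinct points $Q_{min}+j\Delta$, and independence and spanning come in one stroke from the standard-monomial basis attached to the degeneration of Theorem \ref{br3}. That input is heavier than anything the paper invokes here: the statement that monomials not divisible by one side of the initial binomial lift to a vector space basis of $R_3(sl_3(\C))$ is a consequence of the subduction/flatness machinery that the paper leaves implicit, and your phrase ``the leading term of the relation'' needs care since the relation has three terms --- what you mean is one side, namely $P_{12}P_{23}P_{31}$ (matching the preferred factorization of $Q_{min}$), of the binomial initial form $ST-P_{12}P_{23}P_{31}$. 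In exchange, your route identifies the basis with the fiber points directly, so no separate dimension count is needed, and the step you flagged as delicate is in fact harmless: the identification of $\Delta$ with the trade $\mathcal{S}\mathcal{T}\rightsquigarrow\mathcal{P}_{21}\mathcal{P}_{32}\mathcal{P}_{13}$ is pure lattice arithmetic (both are the difference of the two points of the fiber over $((1,1),(1,1),(1,1))$, which has exactly two elements by the paper's count $k+1$ with $s(min)=t(min)=1$), and adding a fixed lattice vector cannot interact with the $\mathcal{P}_{ij}$ factors already present in $Q_{min}$.
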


To prove this proposition we use the BZ triangles to keep track of the dimension of 
the space of invariants $(V(\alpha)\otimes V(\beta) \otimes V(\gamma))^{sl_3(\C)}.$
To the minimum triangle $Q_{min}(\alpha, \beta, \gamma)$ above we associate the following monomial in $R_3(sl_3(\C)).$

\begin{equation}
M_{min} = S^{s(min)}T^{t(min)} \prod P_{ij}^{p_{ij}(min)}\\
\end{equation}

This monomial has weight $(\alpha, \beta, \gamma)$ by construction, and therefore
defines an invariant in  $(V(\alpha)\otimes V(\beta) \otimes V(\gamma))^{sl_3(\C)}.$
Furthermore, we can compute the $v_{\theta}$-value of this monomial, 

\begin{equation}
v_{\theta}(M_{min}) = s(min) + t(min) + \sum p_{ij}(min)\\ 
\end{equation}

Recall $k = min\{s(min), t(min)\},$ we produce $k$ more monomials in $(V(\alpha)\otimes V(\beta) \otimes V(\gamma))^{sl_3(\C)},$ one associated to each $1 \leq \ell \leq k$ as follows. 

\begin{equation}
M_{\ell} = (ST)^{-\ell}(P_{12}P_{23}P_{31})^{\ell} M_{min}\\ 
\end{equation}

\noindent
Each application of the Laurant monomial $(ST)^{-1}(P_{12}P_{23}P_{31})^{1}$ has the effect of raising the value of $v_{\theta}$ by $1$ while keeping the monomial in $(V(\alpha)\otimes V(\beta) \otimes V(\gamma))^{sl_3(\C)}.$  In this way we get $k+1$ monomials

\begin{equation}
M_{min} = M_0, M_1, \ldots, M_k\\
\end{equation}

\noindent
with $k+1$ distinct $v_{\theta}$ values $v_{\theta}(M_{min}), v_{\theta}(M_{min}) +1, \ldots, v_{\theta}(M_{min}) + k,$ where

\begin{equation}
v_{\theta}(M_{\ell}) = v_{\theta}(M_{min}) + \ell\\
\end{equation}

It follows from general properties of valuations that these monomials must be linearly independent, and we have already established that the invariant space is of dimension exactly $k+1,$ this proves the proposition.  We can extend this proposition to the following theorem.

\begin{theorem}\label{agrade}
The associated graded algebra of $R_3(sl_3(\C))$ with respect to the valuation $v_{\theta}$ is isomorphic to 
\begin{equation}
\C[S, T, P_{12}, P_{13}, P_{23},P_{21}, P_{31}, P_{32}]/ <P_{12}P_{23}P_{31} - P_{21}P_{32}P_{13}>.\\
\end{equation}
\end{theorem}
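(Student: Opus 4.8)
The plan is to identify $\gr_{v_\theta}(R_3(sl_3(\C)))$ with $\C[CB_3]$ by exhibiting $\{S, T, P_{ij}\}$ as a subduction basis for $(R_3(sl_3(\C)), v_\theta)$ and computing the initial form of the single defining relation. First I would determine the initial form of $g = ST - P_{12}P_{23}P_{31} + P_{21}P_{32}P_{13}$. By the preceding Lemma each of $S, T, P_{ij}$ has $v_\theta$-value $1$, and $v_\theta$ is additive on products (as noted above), so $v_\theta(ST) = 2$ while $v_\theta(P_{12}P_{23}P_{31}) = v_\theta(P_{21}P_{32}P_{13}) = 3$. The $ST$ term therefore lies in strictly lower filtration degree and is dropped, giving
\begin{equation}
\mathrm{in}_{v_\theta}(g) = P_{12}P_{23}P_{31} - P_{21}P_{32}P_{13},
\end{equation}
which is exactly the binomial defining $\C[CB_3]$. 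This produces a surjection $\C[CB_3] = \C[S, T, P_{ij}]/\langle \mathrm{in}_{v_\theta}(g)\rangle \twoheadrightarrow \gr_{v_\theta}(R_3(sl_3(\C)))$, and it remains to see that it is an isomorphism.

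Next I would verify the subduction basis property using the monomials $M_0, \ldots, M_k$ built in the proof of the preceding Proposition. In each multidegree $(\alpha, \beta, \gamma)$ these $k+1$ monomials in the generators attain the $k+1$ distinct consecutive values $v_\theta(M_{min}), \ldots, v_\theta(M_{min}) + k$, and since $\dim (V(\alpha)\otimes V(\beta) \otimes V(\gamma))^{sl_3(\C)} = k+1$ they form a basis adapted to the $v_\theta$-filtration. Consequently their initial forms $\mathrm{in}_{v_\theta}(M_\ell)$ form a homogeneous basis of the corresponding graded piece of $\gr_{v_\theta}(R_3(sl_3(\C)))$. As each $M_\ell$ is a monomial in $S, T, P_{ij}$, this shows the initial forms of the generators generate the associated graded, i.e.\ $\{S, T, P_{ij}\}$ is a subduction basis. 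By the syzygy-lifting property of subduction bases (\cite{St} Ch.\ 11, \cite{M5}, \cite{Ka}) one then has $\gr_{v_\theta}(R_3(sl_3(\C))) = \C[S, T, P_{ij}]/\mathrm{in}_{v_\theta}(I)$, where $I = \langle g \rangle$.

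To finish I would show $\mathrm{in}_{v_\theta}(I) = \langle \mathrm{in}_{v_\theta}(g)\rangle$ by matching multigraded Hilbert functions. Passing to the associated graded preserves the dimension of each $(\alpha, \beta, \gamma)$-graded piece, so $\dim \gr_{v_\theta}(R_3(sl_3(\C)))_{(\alpha,\beta,\gamma)} = k+1$. On the $\C[CB_3]$ side, the Markov-basis reachability recorded above --- every element of the fiber over $(\alpha,\beta,\gamma)$ is obtained from $Q_{min}(\alpha,\beta,\gamma)$ by repeatedly applying the minimal triangle, which corresponds to multiplying by $(ST)^{-1}(P_{12}P_{23}P_{31})$ --- shows that the classes of $M_0, \ldots, M_k$ exhaust $CB_3(\alpha,\beta,\gamma)$; they are pairwise distinct because they differ in their $S, T$-exponents, which the relation $P_{12}P_{23}P_{31} = P_{21}P_{32}P_{13}$ cannot alter. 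Hence $\dim \C[CB_3]_{(\alpha,\beta,\gamma)} = k+1$ as well, the surjection above is an isomorphism in every multidegree, and the theorem follows.

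I expect the main obstacle to be the subduction basis step: the content is not the formal initial-form computation but the guarantee that no further generators or relations surface in the associated graded. The explicit realization of all $k+1$ filtration values by monomials $M_\ell$ in the generators is precisely what rules this out, so the crux is to apply the preceding Proposition uniformly across all multidegrees and to confirm that the reachability of the whole fiber from $Q_{min}$ pins the Hilbert function of $\C[CB_3]$ from above.
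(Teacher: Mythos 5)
Your proposal is correct and follows essentially the same route as the paper: the monomials $M_0,\ldots,M_k$ from the preceding Proposition show that $S,T,P_{ij}$ form a subduction basis, syzygy lifting gives $\mathrm{gr}_{v_\theta}(R_3(sl_3(\C))) \cong \C[S,T,P_{ij}]/\mathrm{in}_{v_\theta}(I)$ with $I$ principal, and the initial form of $ST - P_{12}P_{23}P_{31}+P_{21}P_{32}P_{13}$ is the stated binomial. Your final multigraded Hilbert-function comparison, establishing $\mathrm{in}_{v_\theta}(I)=\langle \mathrm{in}_{v_\theta}(g)\rangle$ via the distinctness and exhaustion of the classes of $M_0,\ldots,M_k$ in $\C[CB_3]$, simply makes explicit a step the paper delegates to the cited subduction machinery of \cite{M5}, \cite{Ka}, \cite{St}, rather than constituting a different approach.
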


\begin{proof}
The proposition above shows that the initial forms of the monomial generators $S, T, P_{ij}$ generate the associated graded algebra.  This implies that they are a subduction basis for $R_3(sl_3(\C))$ with respect to the valuation $v_{\theta}.$  From this it follows that

\begin{equation}
gr_{v_{\theta}}(R_3(sl_3(\C))) \cong \C[X]/in_{v_{\theta}}(I)\\
\end{equation}

\noindent
For any set of generators $X \subset R_3(sl_3(\C))$ containing our subduction basis, and
$I$ the associated ideal of presentation.  In this case we have

\begin{equation}
I = <P_{12}P_{23}P_{31} - P_{21}P_{32}P_{13} + ST>,\\
\end{equation}

\noindent
and the initial form of the principal generator of this ideal is $P_{12}P_{23}P_{31} - P_{21}P_{32}P_{13}.$ 
\end{proof}

It also follows that the monomials $M_0, M_1, \ldots, M_{\ell}$ form a basis of the space
$V_{v_{\theta}(M_{min}) + \ell}(\alpha^*, \beta^*, \gamma^*) \subset (V(\alpha) \otimes V(\beta) \otimes V(\gamma))^{sl_3(\C)}.$

\section{The algebra of conformal blocks $V_{0, 3}(SL_3(\C))$}\label{cb3}

In this section we translate our results on $R_3(sl_3(\C))$ directly over to the algebra $V_{0, 3}(SL_3(\C)).$ We also employ tropical geometry and the algebraic geometry of a simple Hilbert scheme to depict how commutative algebras which "count" conformal blocks are related.   We let $\mathbb{V}_{0, 3}$ denote the multigraded Hilbert function of $V_{C, \vec{p}}(SL_3(\C)).$ In \cite{M4}, the algebra of conformal blocks $V_{0, 3}(SL_3(\C))$ was shown to be isomorphic to the following subalgebra of $R_3(sl_3(\C))\otimes \C[X],$ with $X$ a variable.

\begin{equation}
V_{0, 3}(SL_3(\C)) = \bigoplus_{\alpha, \beta, \gamma, k \leq L} V_k(\alpha, \beta, \gamma) \otimes \C X^L\\ 
\end{equation}

From the results of the last section we get the following theorem.

\begin{theorem}
The algebra $V_{0, 3}(SL_3(\C))$ is generated by the elements $S \otimes X, T \otimes X, P_{ij}\otimes X,$ and $1 \otimes X \subset R_3(sl_3(\C)) \otimes \C[X],$ subject to the relation

\begin{equation}
[S\otimes X][T \otimes X][1 \otimes X] - [P_{12}\otimes X][P_{23}\otimes X][P_{31}\otimes X] + [P_{21}\otimes X][P_{32}\otimes X][P_{13}\otimes X]\\
\end{equation} 
\end{theorem}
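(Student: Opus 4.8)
The plan is to transport the structure results for $R_3(sl_3(\C))$ across the isomorphism
$$V_{0, 3}(SL_3(\C)) = \bigoplus_{\alpha, \beta, \gamma, k \leq L} V_k(\alpha, \beta, \gamma) \otimes \C X^L.$$
First I would establish that the generation claim follows from the fact that $S, T, P_{ij}$ generate $R_3(sl_3(\C))$. The key observation is that an element $f \otimes X^L$ sits in $V_{0,3}(SL_3(\C))$ precisely when $f \in V_L(\alpha, \beta, \gamma)$, i.e. when $v_{\theta}(f) \leq L$. Since each of $S, T, P_{ij}$ has $v_{\theta}$-value exactly $1$ (by the Lemma proved above), the tensors $S\otimes X, T\otimes X, P_{ij}\otimes X$ and $1\otimes X$ all lie in the algebra: the extra factor of $X$ provides exactly the ``room'' in the level filtration that multiplication by a $v_{\theta}$-value-$1$ generator consumes. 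A monomial $M = S^s T^t \prod P_{ij}^{p_{ij}}$ of total degree $d = s + t + \sum p_{ij}$ has $v_{\theta}(M) \leq d$, so $M \otimes X^d$ is a product of $d$ of our generators and lands in $V_{0,3}(SL_3(\C))$; conversely every element of $V_k(\alpha,\beta,\gamma)\otimes \C X^L$ with $k \leq L$ is obtained by padding with the requisite powers of $1 \otimes X$. This shows the listed elements generate.

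Next I would verify that the displayed cubic is indeed a relation among these generators. The point is that the defining relation of $R_3(sl_3(\C))$, namely $ST - P_{12}P_{23}P_{31} + P_{21}P_{32}P_{13} = 0$, is homogeneous of degree three in the generators $S, T, P_{ij}$, so each of its three monomial terms carries the same grading data $(\alpha,\beta,\gamma)$ and the same total generator-degree $3$. Tensoring each monomial with $X^3$ (equivalently, multiplying the first term $ST$ by the unit generator $1\otimes X$ to bring it to degree three) produces the displayed form, and the relation holds because it already holds in $R_3(sl_3(\C))\otimes \C[X]$. I would note that the unit $1 \otimes X$ appears in the $[S\otimes X][T\otimes X][1\otimes X]$ term precisely to balance the filtration degrees: $ST$ has $v_\theta$-value $2$ but must appear at level $3$ to match the two $P$-monomials, which have $v_\theta$-value $3$.

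The genuinely substantive step — and the main obstacle — is to show that this cubic \emph{generates} the ideal of relations, i.e. that there are no further relations. For this I would invoke the subduction-basis machinery from the previous section together with Theorem \ref{m4}'s philosophy: the associated graded algebra $\gr_{v_\theta}(R_3(sl_3(\C)))$ was computed to be $\C[CB_3]$, presented by the single binomial $P_{12}P_{23}P_{31} - P_{21}P_{32}P_{13}$. The Rees-algebra (or filtered-algebra) construction $\bigoplus_k V_k(\alpha,\beta,\gamma)\otimes \C X^L$ interpolates between $R_3(sl_3(\C))$ itself (recovered by setting $X = 1$) and its toric degeneration $\C[CB_3]$; its defining relation is exactly the homogenization of the $R_3$-relation with respect to the $v_\theta$-filtration, with $1\otimes X$ playing the role of the homogenizing variable. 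Because the generators form a subduction basis and the syzygy-lifting property of SAGBI bases guarantees that the single initial binomial lifts to a single generator of the full presentation ideal, the homogenized cubic is the sole relation. The care required here is in checking that the Rees algebra inherits the presentation cleanly — that no spurious relations are introduced by the filtration degree bookkeeping — which is where I would spend the most effort, appealing to the references \cite{M5}, \cite{Ka} for the precise lifting statement.
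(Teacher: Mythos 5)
Your proposal is correct and is essentially the paper's own (largely implicit) argument: the paper likewise obtains the theorem by transporting the presentation of $R_3(sl_3(\C))$ across the isomorphism with the Rees-type subalgebra $\bigoplus_{k \leq L} V_k(\alpha,\beta,\gamma)\otimes \C X^L$, using the lemma that $S, T, P_{ij}$ have $v_\theta$-value $1$, the monomial bases $M_0,\ldots,M_\ell$ of the filtered pieces, and the subduction-basis/syzygy-lifting machinery of \cite{M5}, \cite{Ka}, \cite{St} to conclude that the homogenized cubic is the only relation. The only point to make explicit is that your ``padding'' step tacitly relies on the previous section's final statement --- that $V_{v_\theta(M_{min})+\ell}(\alpha^*,\beta^*,\gamma^*)$ has a basis of monomials in $S, T, P_{ij}$ of degree at most $v_\theta(M_{min})+\ell$ --- since generation of $R_3(sl_3(\C))$ alone would not rule out expressions involving monomials of degree exceeding $L$.
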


Furthermore, we can extend the valuation $v_{\theta}$ to a valuation on $V_{0, 3}(SL_3(\C)).$ The associated graded algebra of this valuation has the following presentation. 

\begin{equation}
gr_{v_{\theta}}(V_{0, 3}(SL_3(\C)))\\
\end{equation}
$$ = \C[S, T, X, P_{12}, P_{23}, P_{31}, P_{21}, P_{32}, P_{13}]/<P_{12}P_{23}P_{31} -  P_{21}P_{32}P_{13}>$$ 

\noindent
This can be seen to be isomorphic to the semigroup algebra of $CB_3^*.$ 

The three toric degenerations of the variety defined by the equation $ P_{12}P_{23}P_{31} - P_{21}P_{32}P_{13} + STX$  are indexed by the maximal cones
of the associated $tropical$ $variety$ $\mathbb{T}( P_{12}P_{23}P_{31} - P_{21}P_{32}P_{13} + STX) \subset \R^9,$ for the rudiments of tropical geometry see \cite{JSST}. 
The three cones of this tropical variety are defined by the inequalities $ S + T + X = P_{12} + P_{23} + P_{31} \geq P_{21} + P_{32} + P_{13},$ $ S + T + X \leq P_{12} + P_{23} + P_{31} = P_{21} + P_{32} + P_{13},$  and $ S + T + X = P_{21} + P_{32} + P_{13} \geq  P_{12} + P_{23} + P_{31},$ and are glued together along $ S + T + X = P_{12} + P_{23} + P_{31} = P_{21} + P_{32} + P_{13}.$  After quotienting away the lineality space defined by  $S + T + X = P_{12} + P_{23} + P_{31} = P_{21} + P_{32} + P_{13}$ we obtained a trinode, each branch of which represents a toric degeneration of $V_{0, 3}(SL_3(\C))$, see Figure \ref{fig:Hilbert}.

Another meaningful geometric structure which indexes algebras which can be used to count conformal blocks is the Hilbert scheme $\mathcal{H}_9^{\mathbb{V}_{0,3}}$ of ideals in the polynomial ring on the variables in Figure \ref{fig:trinodes}, with multigraded Hilbert function $\mathbb{V}_{0,3}.$ The following can be found in \cite{KMSW}, and can also be derived direction from our description of $V_{0, 3}(SL_3(\C))$. 

\begin{equation}
\mathbb{V}_{0,3}(\lambda, \mu, \eta, L) =  L -  max\{\lambda_1 + \lambda_2, \mu_1 + \mu_2, \eta_1 + \eta_2, L_1, L_2\} +1\\
\end{equation}

\noindent
where

\begin{equation}
L_1 =  \frac{1}{3}(2(\lambda_1 + \mu_1 + \eta_1) + \lambda_2 + \mu_2 + \eta_2) - min\{\lambda_1, \mu_1, \eta_1\}\\
\end{equation}

\noindent
and

\begin{equation}
L_2 = \frac{1}{3}(2(\lambda_2 + \mu_2 + \eta_2) + \lambda_1 + \mu_1 + \eta_1) - min\{\lambda_2, \mu_2, \eta_2\}.
\end{equation}

This expression holds when it is a non-negative integer, otherwise it is set to be $0.$  In order for an ideal $I \subset \C[S, T, X, P_{12}, P_{23}, P_{31}, P_{21}, P_{32}, P_{13}]$ to yield an algebra with this same multigraded Hilbert function, it must be principally generated by a polynomial  involving only the monomials $STX,  P_{12}P_{23}P_{31}, P_{21}P_{32}P_{13}.$  For this reason, 
$\mathcal{H}_9^{\mathbb{V}_{0,3}} = \mathbb{P}^2,$  and any semigroup algebra generated by $ S, T, X,$ $P_{12}, P_{23}, P_{31},$ $P_{21}, P_{32}, P_{13} $ which counts $SL_3(\C)$ conformal blocks must lie on one of the copies of $\mathbb{P}^1$ defined by linear combinations of two of these monomials, see  Figure \ref{fig:Hilbert}.

\begin{figure}[htbp]
\centering
\includegraphics[scale = 0.55]{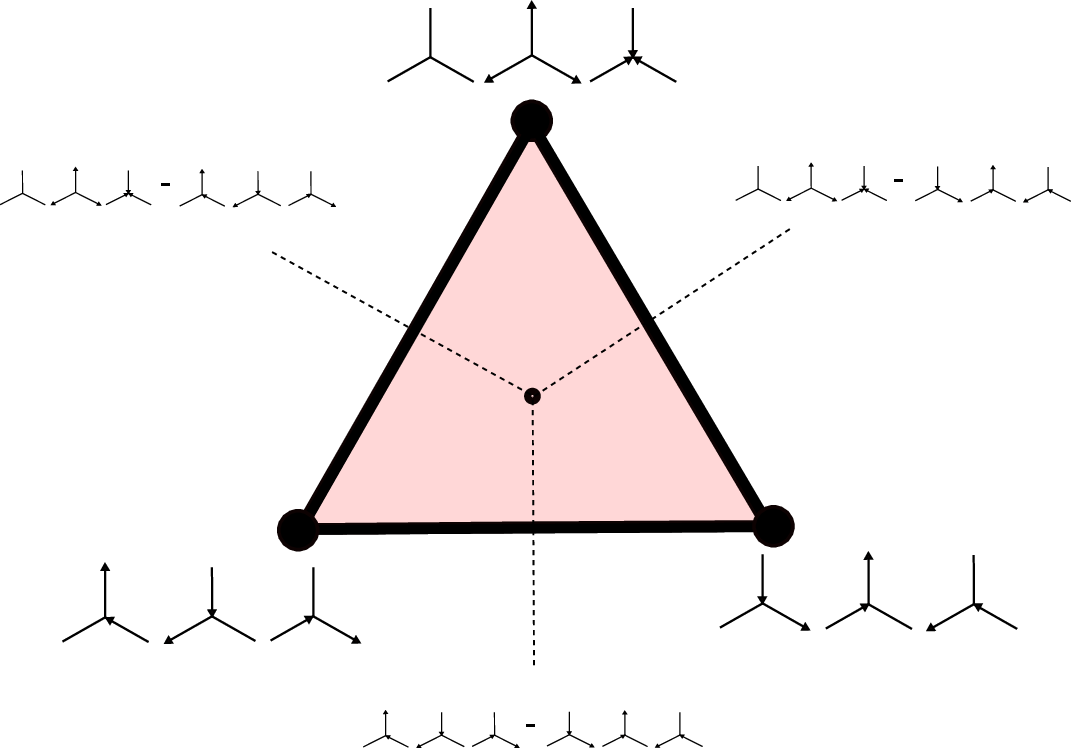}
\caption{The Hilbert scheme $\mathcal{H}_9^{\mathbb{V}_{0,3}}$, with dual tropical variety $\mathbb{T}( P_{12}P_{23}P_{31} - P_{21}P_{32}P_{13} + STX )$.}
\label{fig:Hilbert}
\end{figure}

\section{The algebra of conformal blocks $V_{C, \vec{p}}(SL_3(\C))$}\label{gensec}

In this section we use results from \cite{M4} and the previous section 
to obtain toric degenerations of the algebra of $SL_3(\C)$ conformal blocks 
over a general marked stable curve $(C, \vec{p}).$  We also use these degenerations
to show that $V_{C, \vec{p}}(SL_3(\C))$ is a Gorenstein algebra.   

\begin{definition}
Let $\mathfrak{k}$ be a field, then a $\Z$-graded $\mathfrak{k}$-algebra $R$ is said to be $Gorenstein$ if the Matlis Dual

\begin{equation}
H_{m}^{dim(R)}(R)^* = \underline{Hom}_{\mathfrak{k}}(H_{m}^{dim(R)}(R), \mathfrak{k}),\\
\end{equation}

\noindent
is isomorphic to grade-shifted copy $R(-a)$ of $R$. Here $m$ is the 
maximal ideal generated by elements in $R$  of positive degree, and $\underline{Hom}_{\mathfrak{k}}(-,-)$ is the functor of graded $\mathfrak{k}$-morphisms.  The number $a$ is called the $a$-invariant of the graded Gorenstein algebra $R$, see \cite{BH}.
\end{definition}

\noindent
For an affine semigroup algebra this property is expressed as follows. 

\begin{proposition}
Let $S_P$ be the semigroup given by the lattice
points in a polyhedral cone $P$.  Then the algebra
$\C[S_P]$ is Gorenstein if and only if 
there is a lattice point $\omega \in int(P)$ with $int(P) \cap S_P = \omega + S_P.$
Furthermore, in the presence of a grading, 
we have $a(\C[S_P]) = deg(\omega).$ 
\end{proposition}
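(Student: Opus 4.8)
The plan is to characterize the Gorenstein property of $\C[S_P]$ through the canonical module of the semigroup algebra, which for normal affine semigroup algebras admits a concrete combinatorial description due to Danilov and Stanley. Specifically, the canonical module $\omega_{\C[S_P]}$ is isomorphic to the ideal spanned by the monomials corresponding to the interior lattice points $int(P) \cap S_P$. The algebra $\C[S_P]$ is Gorenstein precisely when this canonical module is principal, i.e., isomorphic to a rank-one free module $\C[S_P](-a)$ for some shift $a$. I would first recall the normality hypothesis implicit in the statement ($S_P$ being the full set of lattice points of a polyhedral cone $P$ guarantees $\C[S_P]$ is normal, hence Cohen-Macaulay by Hochster's theorem), so that the canonical module has this clean interior-point description.

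The forward direction proceeds as follows. Assume $\C[S_P]$ is Gorenstein, so the canonical module is principal, generated by a single monomial $z^{\omega}$ for some $\omega \in S_P$. Since the canonical module is identified with the interior ideal, its generator $z^{\omega}$ must be the unique minimal element of $int(P) \cap S_P$ under the partial order $u \leq v \iff v - u \in S_P$; and principality forces $int(P) \cap S_P = \omega + S_P$. Here the point $\omega$ lies in $int(P)$ by construction, giving the desired interior lattice point.

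For the converse, suppose there exists $\omega \in int(P)$ with $int(P) \cap S_P = \omega + S_P$. Then the map $S_P \to int(P) \cap S_P$ sending $s \mapsto \omega + s$ is a bijection compatible with the $S_P$-module structure, so the canonical module $\omega_{\C[S_P]}$, being spanned by $\{z^{u} : u \in int(P) \cap S_P\}$, is freely generated by $z^{\omega}$ as a $\C[S_P]$-module. This is exactly the Gorenstein condition. For the grading statement, once $\omega_{\C[S_P]} \cong \C[S_P](-a)$ as graded modules, comparing the degree of the generator $z^{\omega}$ on both sides shows that the shift $a$ equals $\deg(\omega)$, and by the standard identification of $a$ with the $a$-invariant appearing in the definition above, we conclude $a(\C[S_P]) = \deg(\omega)$.

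The main obstacle is establishing the Danilov-Stanley description of the canonical module as the interior ideal and justifying the normality/Cohen-Macaulayness that makes this description applicable; this is where all the real content lies, and it is where I would either cite the relevant results (for instance \cite{BH}, which the paper already references for Gorenstein background) or reproduce the short argument identifying $\omega_{\C[S_P]}$ with $\bigoplus_{u \in int(P) \cap S_P} \C z^{u}$. Once that identification is in hand, the equivalence with principality of this ideal and the translation into the combinatorial condition $int(P) \cap S_P = \omega + S_P$ is essentially bookkeeping on monomial bases, and the degree computation for the $a$-invariant is immediate.
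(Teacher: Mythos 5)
Your proposal is correct and takes essentially the same route as the paper: the paper's entire proof is the citation ``This is Corollary 6.3.8 in \cite{BH},'' and your argument --- normality of $S_P$, Hochster's theorem for Cohen--Macaulayness, the Danilov--Stanley identification of the canonical module with the interior ideal, and Gorenstein as principality of that ideal --- is precisely the standard proof of that corollary, relying on the same reference. Unpacking the citation this way is exactly what the paper leaves implicit, so there is nothing to add.
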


This is Corollary 6.3.8 in \cite{BH}. By a theorem of Stanley (see \cite{BH}, Corollary $4.4.6$), the Gorenstein property on a graded domain depends only on details of its Hilbert function.  Since this is unchanged by flat degeneration, we reduce establishing this property for $V_{C, \vec{p}}(SL_3(\C))$ to the case $\C[BZ_{\Gamma}^*],$ given Theorem \ref{main}. 

To prove Theorem \ref{main}, we can combine the degeneration described in Theorem \ref{m4} with the one defined on each component $V_{0, 3}(SL_3(\C))$ in the previous section, it remains only to translate operations on semigroups to operations on their corresponding semigroup algebras.   We have $\C[(CB_3^*)^{v \in V(\Gamma)}] = \bigotimes_{v \in V(\Gamma)} \C[CB_3^*]$.  This algebra carries an action of the torus $T^{\Gamma},$ which appeared in the statement of Theorem \ref{m4}.  The algebra $\C[CB_{\Gamma}^*]$ can be constructed as the invariant subalgebra of $\otimes_{v \in V(\Gamma)} \C[CB_3^*]$ with respect to the torus $T^{\Gamma}.$ 

\begin{equation}
\C[CB_{\Gamma}^*] = (\bigotimes_{v \in V(\Gamma)} \C[CB_3^*])^{T^{\Gamma}}\\
\end{equation}

All degenerations we have constructed are $T^{\Gamma}-$invariant, so this proves Theorem \ref{main}.

\begin{theorem}
The algebra $V_{C, \vec{p}}(SL_3(\C))$ has the structure of a graded
Gorenstein algebra, with $a-$invariant equal to $6.$
\end{theorem}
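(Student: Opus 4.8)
The plan is to reduce the Gorenstein statement for $V_{C, \vec{p}}(SL_3(\C))$ entirely to a combinatorial computation on the semigroup $CB_{\Gamma}^*$, using the machinery already assembled. By Theorem \ref{main} the algebra $V_{C, \vec{p}}(SL_3(\C))$ flatly degenerates to $\C[CB_{\Gamma}^*]$, and by Stanley's theorem (\cite{BH}, Corollary $4.4.6$) the Gorenstein property of a graded domain depends only on the Hilbert function, which is preserved under flat degeneration. Since both algebras are domains (the first because it is a coordinate ring of an irreducible variety, the second because it is an affine semigroup algebra), it suffices to establish that $\C[CB_{\Gamma}^*]$ is Gorenstein and to locate its $a$-invariant. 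So the first step is simply to invoke these two reductions, pinning the whole problem on the cone cut out by $CB_{\Gamma}^*$.

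The heart of the argument is then the criterion from the proposition above: $\C[S_P]$ is Gorenstein precisely when there exists a lattice point $\omega \in \mathrm{int}(P)$ with $\mathrm{int}(P) \cap S_P = \omega + S_P$, and in that case $a(\C[S_P]) = \deg(\omega)$. My plan is to exhibit this $\omega$ explicitly for $CB_{\Gamma}^*$. Recall that $CB_{\Gamma}^*$ is built as a fiber product over $\Gamma$ of local copies of $CB_3^*$, with the gluing condition that the $\partial$-values on identified edges be dual. The natural candidate for the interior point is the one assigning, at every trinode, the most symmetric strictly-interior BZ-type configuration — concretely, the element corresponding to putting weight $1$ on all six interior hexagon weights of each local $CB_3^*$ (the generator $X$ raised appropriately so that every inequality defining the cone is strict). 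First I would verify that this $\omega$ lies in the relative interior by checking that it makes each defining inequality of the local cones strict and that it is compatible with the duality gluing; the self-dual symmetry of the central configuration should make the gluing constraints automatic. Then I would verify the translation identity $\mathrm{int}(P) \cap S_P = \omega + S_P$ by showing that subtracting $\omega$ carries any interior lattice point back into $S_P$, which reduces — via the fiber-product structure — to the same statement for each local $CB_3^*$, an eight-generator semigroup whose interior points are easy to enumerate against the single binomial relation $P_{12}P_{23}P_{31} = P_{21}P_{32}P_{13}$.

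Once $\omega$ is identified, computing the $a$-invariant amounts to computing $\deg(\omega)$ in the relevant grading. Here the grading is the conformal-blocks level $L$, and I expect $\omega$ to distribute as a local contribution of degree $2$ at each of the three "slots" of the symmetric central configuration, accumulating to the claimed value $6$ after the fiber-product gluing collapses the redundancy across internal edges; I would confirm this by tracking how $\deg$ behaves under the $T^\Gamma$-invariant identification $\C[CB_{\Gamma}^*] = (\bigotimes_{v} \C[CB_3^*])^{T^\Gamma}$ and checking that the level of the central interior generator at a single trinode is exactly $2$. The main obstacle I anticipate is the verification of the exact translation identity $\mathrm{int}(P) \cap S_P = \omega + S_P$: while the existence of an interior point and the domain/Hilbert-function reductions are routine, proving that $\omega$ is a precise translation generator for the entire interior — rather than merely an interior point — requires controlling the interaction between the local Gorenstein structure of each $CB_3^*$ and the fiber-product gluing, and in particular ruling out that the gluing constraints obstruct the clean translation. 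I would handle this by first establishing the identity for a single $CB_3^*$ and then arguing that fiber products of Gorenstein cones along the duality map preserve the translation-generator property, which is where the symmetry and self-duality of the chosen central configuration do the essential work.
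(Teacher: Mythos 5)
Your overall architecture is the same as the paper's: reduce via the flat degeneration of Theorem \ref{main} and Stanley's Hilbert-function criterion to the semigroup algebra $\C[CB_{\Gamma}^*]$, apply the interior-lattice-point characterization of Gorenstein semigroup algebras, exhibit the canonical generator locally at each trinode, and push it through the fiber product using self-duality of its edge weights (the paper does exactly this via Lemma \ref{intersect} together with the product lemma, since a fiber product is a product intersected with the gluing subspace, and the lemma's hypothesis $\omega \in \mathcal{H}$ is what your self-duality remark supplies). But two of your concrete computations are wrong, and both would derail the execution. The first is your candidate $\omega$: putting weight $1$ on the six hexagon entries only and then multiplying by a power of $X$ gives, in the $CB_3^*$ presentation, the class of $X^{k}P_{12}P_{23}P_{31}P_{21}P_{32}P_{13}$, in which the exponents of $S$ and $T$ are zero. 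Since the binomial relation $P_{12}P_{23}P_{31} = P_{21}P_{32}P_{13}$ involves neither $S$ nor $T$, every representative of this class has vanishing $S$- and $T$-exponent, so the point lies on the two facets defined by those coordinates no matter how large $k$ is; no power of $X$ can repair this. (Also, $CB_3^*$ has nine generators, not eight.) The correct local generator is $\omega_3 = XSTP_{12}P_{23}P_{31} = XSTP_{21}P_{32}P_{13}$, the image of the all-ones vector: there all nine facet inequalities are strict, the three edge weights are each $(2,2)$ --- which is exactly the self-duality needed for the gluing --- and the level is $6$. This is the paper's $\omega_3$.

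The second error is the degree bookkeeping. You propose to check that ``the level of the central interior generator at a single trinode is exactly $2$'' and to let these contributions accumulate to $6$ across slots and gluing. In fact $\omega_3$ is a product of six level-one generators, so its local level is $6$, and the level grading on $(\bigotimes_{v \in V(\Gamma)} \C[CB_3^*])^{T^{\Gamma}}$ is the \emph{common} value of the local levels --- the torus invariance forces all the $L_v$ to agree --- not their sum. If the local level really were $2$, the $a$-invariant of the glued algebra would be $2$; if levels added up across trinodes, the $a$-invariant would grow with $|V(\Gamma)|$, which it does not. The answer is $6$ precisely because each local canonical generator has level $6$ and these equal values glue to a single global level, giving the component $V_{C, \vec{p}}((2,2), \ldots, (2,2), 6)$ as in the paper. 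With the corrected $\omega_3$ and the corrected grading argument, your plan becomes the paper's proof; as written, both verifications you flag as routine would fail.
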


\begin{proof}
This is a consequence of the following Lemmas. 
\end{proof}

\begin{lemma}\label{intersect}
If $P$ is polytope with $\C[P]$ a Gorenstein semigroup algebra with $\omega \in a\circ P$, and if $Q = P \cap \mathcal{H}$ 
is polytope obtained from $P$ by intersecting with a subspace $\mathcal{H}$ in a way such that $\omega \in a\circ Q$ then $\C[Q]$ is a Gorenstein semigroup algebra with $*-$canonical generator $\omega.$
\end{lemma}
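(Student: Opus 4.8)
The plan is to use the characterization of Gorenstein semigroup algebras given in the proposition just above (Corollary 6.3.8 of \cite{BH}): a semigroup algebra $\C[S_P]$ is Gorenstein precisely when there is a lattice point $\omega \in \mathrm{int}(P)$ with $\mathrm{int}(P) \cap S_P = \omega + S_P$, and in that case $a(\C[S_P]) = \deg(\omega)$. So the entire content of the lemma is to check that the \emph{same} interior lattice point $\omega$ that witnesses the Gorenstein property for $P$ also witnesses it for the slice $Q = P \cap \mathcal{H}$. The hypothesis already supplies $\omega \in a \circ Q$, i.e.\ $\omega$ lies in the $a$-th Minkowski sum (equivalently the $a$-th dilate) of $Q$, so $\omega$ is a lattice point of $S_Q$ at the correct degree; what remains is to verify the interior-translation identity $\mathrm{int}(Q) \cap S_Q = \omega + S_Q$.

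First I would fix the ambient setup: $P$ is a rational polyhedral cone (or the cone over a polytope) with semigroup $S_P = P \cap \Z^n$, and $\mathcal{H}$ is a rational subspace, so that $Q = P \cap \mathcal{H}$ is again a rational polyhedral cone with $S_Q = Q \cap \Z^n = S_P \cap \mathcal{H}$. The key geometric observation is that the relative interior of the slice is contained in the slice of the interior: because $\mathcal{H}$ meets $\mathrm{int}(P)$ (as it must, since $\omega \in \mathrm{int}(P) \cap \mathcal{H}$ by the $P$-Gorenstein hypothesis), a point $q$ is in $\mathrm{int}(Q)$ relative to $\mathcal{H}$ iff $q \in \mathrm{int}(P) \cap \mathcal{H}$. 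This standard fact about intersecting a cone with a supporting-free subspace through its interior is what lets me transport the interior condition across the slicing.

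With that in hand, the two inclusions defining $\mathrm{int}(Q) \cap S_Q = \omega + S_Q$ follow by restriction. For the inclusion $\supseteq$: if $s \in S_Q \subset S_P$, then by the $P$-Gorenstein identity $\omega + s \in \mathrm{int}(P)$, and since both $\omega$ and $s$ lie in $\mathcal{H}$ we get $\omega + s \in \mathrm{int}(P) \cap \mathcal{H} = \mathrm{int}(Q) \cap S_Q$. For the inclusion $\subseteq$: if $q \in \mathrm{int}(Q) \cap S_Q$, then $q \in \mathrm{int}(P) \cap S_P$, so the $P$-identity gives $q = \omega + s$ for some $s \in S_P$; but $q, \omega \in \mathcal{H}$ force $s = q - \omega \in \mathcal{H}$, hence $s \in S_P \cap \mathcal{H} = S_Q$, giving $q \in \omega + S_Q$. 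The degree statement $a(\C[Q]) = \deg(\omega)$ is then immediate from the proposition, and $\omega$ is the stated $\ast$-canonical generator.

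The main obstacle is purely the geometry of the first step: justifying that the relative interior of $Q$ equals $\mathrm{int}(P) \cap \mathcal{H}$ rather than something smaller. This identity can fail if $\mathcal{H}$ is tangent to $P$ or meets only the boundary of $P$, so I would lean on the hypothesis $\omega \in a\circ Q$ with $\omega \in \mathrm{int}(P)$ to guarantee that $\mathcal{H}$ genuinely passes through the interior of $P$, which rules out the degenerate cases and makes the relative-interior computation routine. Everything else is the formal two-inclusion argument above, which goes through once this transversality point is secured.
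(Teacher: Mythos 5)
Your proof is correct and follows essentially the same route as the paper's: both reduce to the Bruns--Herzog lattice-point characterization and both rest on the identity $\mathrm{relint}(Q) = \mathrm{relint}(P) \cap \mathcal{H}$, which the paper derives from the facet structure of $Q$ (proper facets of $Q$ are proper facets of $P$ intersected with $\mathcal{H}$) while you invoke the standard relative-interior intersection theorem, using $\omega \in a\circ Q$ to witness that $\mathcal{H}$ meets the interior of $P$. If anything your write-up is the more complete one, since the paper explicitly states only the inclusion $\mathrm{int}(P)\cap\mathcal{H}\subseteq \mathrm{relint}(Q)$ even though its divisibility conclusion also requires the reverse inclusion, which you supply via the two-inclusion argument.
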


\begin{proof}
The proper facets of $Q$ are all obtained as intersections of proper facets of $P$ with $\mathcal{H},$ 
therefore a point in the relative interior of $P$ which is also in $\mathcal{H}$ is also in the relative interior
of $Q.$  Since all interior lattice points in the Minkowski sum $k\circ Q$ are of the form $kX$ for $X$ in the relative
interior of $Q,$ this shows that any lattice point $X$ in the interior must be divisible by $\omega.$
\end{proof}

\begin{lemma}
$BZ_3^*$ is Gorenstein with $\omega_3 = P_{12}P_{23}P_{31}XST.$
\end{lemma}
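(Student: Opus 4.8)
The plan is to apply the Proposition above (Corollary 6.3.8 of \cite{BH}): I would realize $\C[BZ_3^*]$ as the algebra of a normal affine semigroup and then produce the interior lattice point $\omega_3$ whose translate generates the interior. Writing out the presentation obtained earlier, $\C[BZ_3^*] = \C[X,S,T,P_{12},P_{23},P_{31},P_{21},P_{32},P_{13}]/\langle XST - P_{12}P_{23}P_{31}\rangle$ is a single binomial hypersurface, hence automatically Cohen--Macaulay; so the Gorenstein property will follow once the interior generator is located. First I would establish normality through Serre's criterion. The ring is $S_2$ because it is a hypersurface, and its singular locus is the common zero set of the partials $ST,\,XT,\,XS,\,P_{23}P_{31},\,P_{12}P_{31},\,P_{12}P_{23}$, which forces at least two of $X,S,T$ and at least two of $P_{12},P_{23},P_{31}$ to vanish and thus has codimension at least four. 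Therefore $R_1$ holds and $\C[BZ_3^*]$ is normal, so $BZ_3^* = M\cap P$ for a rational polyhedral cone $P$ in the rank-eight lattice $M = \Z^9/\langle u\rangle$, where $u = e_X+e_S+e_T-e_{12}-e_{23}-e_{31}$, and the Proposition applies.

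Next I would read off the facets of $P$ from its dual cone $P^{\vee} = \{v\in\R^9_{\geq 0} : v_X+v_S+v_T = v_{12}+v_{23}+v_{31}\}$. Its extreme rays are the three free coordinate directions $e_{21},\,e_{32},\,e_{13}$ together with the nine balanced rays $e_a+e_b$ with $a\in\{X,S,T\}$ and $b\in\{P_{12},P_{23},P_{31}\}$. This exhibits $P$ as an intersection of twelve half-spaces and gives an explicit list of primitive facet normals.

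With this list in hand I would verify that $\omega_3 = P_{12}P_{23}P_{31}XST$ lies in the relative interior of $P$ and that the translation identity $\mathrm{int}(P)\cap BZ_3^* = \omega_3 + BZ_3^*$ holds; by the Proposition this yields that $\C[BZ_3^*]$ is Gorenstein with canonical generator $\omega_3$. It is convenient to move between representatives of $\omega_3$ using the defining relation $XST = P_{12}P_{23}P_{31}$ when evaluating the facet functionals. Since $\omega_3$ is a product of six level-one generators, $\deg(\omega_3) = 6$, which recovers the asserted value of the $a$-invariant.

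The main obstacle is the translation identity itself, namely the inclusion $\mathrm{int}(P)\cap BZ_3^*\subseteq \omega_3 + BZ_3^*$ (the reverse inclusion being immediate from normality). Concretely this is the statement that $\omega_3$ meets each facet of $P$ with the least positive multiplicity, so that subtracting it from any interior point keeps every facet value nonnegative and hence lands back in $M\cap P = BZ_3^*$. The delicate points are therefore to confirm that the twelve functionals listed above are exactly the facet normals -- in particular that passage to the quotient by $u$ introduces no further facets and identifies none of them -- and to check that no interior lattice point strictly precedes $\omega_3$ in the semigroup order. Once the facet list is pinned down these are finite evaluations, organized by the symmetry of the generators of $BZ_3$ under the cyclic relabelling of the three marked points.
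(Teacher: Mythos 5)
Your route is genuinely different from the paper's. The paper deduces the lemma from Lemma \ref{intersect}: it realizes the polytope for $BZ_3^*$ as the intersection of (the third dilate of) the unit simplex in $\R^9$ with the hyperplanes cut out by the hexagon relations, and then transports the canonical generator of the simplex, namely the all-ones point $(1,\dots,1)$, to the slice. You instead work directly with the hypersurface presentation $\C[X,S,T,P_{12},\dots,P_{13}]/\langle XST-P_{12}P_{23}P_{31}\rangle$, prove normality via Serre's criterion (your singular-locus estimate is correct: the partials force two of $X,S,T$ and two of $P_{12},P_{23},P_{31}$ to vanish, so the singular locus has codimension at least $3$ in the hypersurface, giving $R_1$, while $S_2$ is free), and you compute the twelve facet normals of the cone $P\subset\R^9/\R u$ correctly: the three free directions $e_{21},e_{32},e_{13}$ and the nine balanced functionals $e_a+e_b$. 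This is more self-contained than the paper's argument, and it has the added virtue that the Gorenstein property itself is immediate (a hypersurface is a complete intersection), so only the identification of the canonical generator is really at stake; your remark that Gorensteinness ``will follow once the interior generator is located'' undersells your own setup.

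However, the final verification step, as you state it, fails, and your own facet list shows why. Every representative of $\omega_3 = P_{12}P_{23}P_{31}XST$ modulo $u = e_X+e_S+e_T-e_{12}-e_{23}-e_{31}$ has exponent $0$ on $P_{21},P_{32},P_{13}$ (in $BZ_3^*$ this element equals $X^2S^2T^2$), and since $u$ is supported away from those three coordinates, the facet functionals $e_{21},e_{32},e_{13}$ all take the value $0$ on it. So the stated $\omega_3$ lies on the boundary of $P$, and no passage between representatives using the relation can change this; the translation identity you plan to check is false for that element. The actual minimal interior point is the transposed product $\omega = XSTP_{21}P_{32}P_{13} = P_{12}P_{23}P_{31}P_{21}P_{32}P_{13}$, represented by $e_X+e_S+e_T+e_{21}+e_{32}+e_{13}$: it evaluates to exactly $1$ on all twelve facet normals, whence $\mathrm{int}(P)\cap BZ_3^* = \omega + BZ_3^*$ follows at once from saturation, with $\deg\omega = 6$ as required. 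This is in fact what the paper's own proof asserts: the all-ones BZ weighting $(1,\dots,1)$ is precisely $\omega$, not the printed $P_{12}P_{23}P_{31}XST$ (whose BZ weighting has hexagon entries $2$ and corner entries $0$), and it matches the paper's later claim that $\omega_\Gamma$ lies in multidegree $((2,2),\dots,(2,2),6)$. In short, the printed statement transposes the indices $ij\leftrightarrow ji$; your method, carried out honestly, would expose this typo, but as written your proposal promises to verify interiority of an element that is not interior, so you must replace $\omega_3$ by $XSTP_{21}P_{32}P_{13}$ before the rest of your argument goes through.
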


\begin{proof}
This follows from Lemma \ref{intersect} above.  The polytope $BZ_3^*$ can be obtained from the simplex
in $\R^9$ given as the convex hull of the origin and the points $(0, \ldots, 1, \ldots, 0).$  One takes the $3$rd
 Minkowski sum of this simplex and intersects it with the hyperplanes defined by the hexagon relations.    
The generator of the $*-$canonical module of this simplex is the point $(1, 1, 1, 1, 1, 1, 1, 1, 1),$
which agrees with $\omega_3 \in BZ_3^*.$
\end{proof}

\begin{lemma}
If $P_1$ and $P_2$ are normal polytopes which give Gorenstein
semigroup algebras with $*-$canonical generators $\omega_1$ and
$\omega_2$ in the same degree, then $\C[P_1 \times P_2]$ is Gorenstein with
$*-$canonical generator $\omega_1 \times \omega_2$.
\end{lemma}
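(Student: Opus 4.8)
The plan is to reduce the claim about products of polytopes to the criterion in Proposition 6.3.8 (from \cite{BH}), which characterizes the Gorenstein property of an affine semigroup algebra $\C[S_P]$ by the existence of a single lattice point $\omega \in \mathrm{int}(P)$ satisfying $\mathrm{int}(P) \cap S_P = \omega + S_P$. So the whole task is to verify that $\omega_1 \times \omega_2$ plays this role for $P_1 \times P_2$, and to check that the grading behaves correctly so that the $*$-canonical generators being in the same degree is precisely what makes the degrees match up.

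The key steps, in order, are as follows. First I would observe that the relative interior of a product polytope factors: $\mathrm{int}(P_1 \times P_2) = \mathrm{int}(P_1) \times \mathrm{int}(P_2)$, since the proper faces of $P_1 \times P_2$ are exactly the products of a face of one factor with the whole of the other (equivalently, products $F_1 \times F_2$ where at least one $F_i$ is proper). Second, I would use the fact that the semigroup of lattice points of the product is the direct product of the two semigroups, $S_{P_1 \times P_2} = S_{P_1} \times S_{P_2}$, which holds because we are working with lattice polytopes sitting in complementary coordinate subspaces. Combining these two factorizations, an interior lattice point of the product is a pair $(u_1, u_2)$ with each $u_i$ an interior lattice point of $P_i$. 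By hypothesis each $P_i$ is Gorenstein, so $u_i \in \omega_i + S_{P_i}$, whence $(u_1, u_2) \in (\omega_1, \omega_2) + S_{P_1} \times S_{P_2}$; the reverse containment is immediate since adding any semigroup element to an interior point keeps it interior. This establishes $\mathrm{int}(P_1 \times P_2) \cap S_{P_1 \times P_2} = (\omega_1 \times \omega_2) + S_{P_1 \times P_2}$, which is the desired criterion.

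The remaining step is bookkeeping on the grading. The normality hypothesis guarantees that each $\C[P_i]$ is generated in degree $1$, so the degree of an element of the product semigroup is additive across the two factors in the natural way; the condition that $\omega_1$ and $\omega_2$ have the same degree $d$ is then what pins down the $a$-invariant of the product, which comes out to $\deg(\omega_1 \times \omega_2) = d$. In the application to $\C[CB^*_\Gamma]$, the two factors are copies of $BZ_3^* \cong CB_3^*$, whose $*$-canonical generator $\omega_3 = P_{12}P_{23}P_{31}XST$ has degree $3$, so the product generator lands consistently and one reads off the total $a$-invariant by summing over the trinodes — this is how the value $6$ (and in general the stated degree) is recovered after taking $T^\Gamma$-invariants.

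I expect the main obstacle to be the interaction between the product construction and the invariant-theoretic passage to $\C[CB^*_\Gamma]$. The lemma as stated is about a genuine Cartesian product of polytopes, but the actual degeneration of $V_{C, \vec{p}}(SL_3(\C))$ is obtained as a fiber product along the gluing map $\partial$, realized as a $T^\Gamma$-invariant subalgebra rather than a full tensor product. Reconciling these requires combining this lemma with Lemma \ref{intersect}: one first forms the genuine product over all trinodes, then intersects with the hyperplanes imposed by the edge-matching (duality) conditions, checking that the canonical generator $\omega_1 \times \cdots \times \omega_{|V(\Gamma)|}$ continues to lie in the interior of the sliced polytope so that Lemma \ref{intersect} applies. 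Verifying that the matching hyperplanes do not push the candidate generator out of the relative interior — i.e.\ that $\omega$ still satisfies the interiority needed to invoke Lemma \ref{intersect} — is the delicate point, and it is what ultimately forces the $a$-invariant to equal $6$ rather than scaling with the size of $\Gamma$.
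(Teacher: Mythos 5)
Your proof of the lemma is correct and is essentially the paper's own argument: the paper's one-line proof is precisely your first observation, that an interior point of $k\circ (P_1 \times P_2)$ factors into a pair of interior points of $k\circ P_1$ and $k \circ P_2$, after which the criterion of Proposition 6.3.8 of \cite{BH} applies factor by factor (with the same-degree hypothesis guaranteeing that $(\omega_1, \omega_2)$ is a genuine element of the product semigroup, since the height coordinate is shared). One caveat on your closing remarks about the application, which lie outside the lemma itself: the generator $\omega_3 = P_{12}P_{23}P_{31}XST$ has level $6$, not $3$, and the $a$-invariant of a product is the \emph{common} degree of the factors' canonical generators rather than a sum over trinodes --- that is exactly what the same-degree hypothesis buys, and it is why the $a$-invariant is $6$ independently of $|V(\Gamma)|$.
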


\begin{proof}
This follows from the fact that any point $(X, Y)$ in the interior of $k\circ (P_1 \times P_2)$
must have $X$ and $Y$ in the interiors of $P_1$ and $P_2$, respectively. 
\end{proof}

Since all $BZ_{\Gamma}^*$ are obtained by intersecting a product $(BZ_3^*)^{V(\Gamma)}$ with a subspace,
this proves the theorem. The generator $\omega_{\Gamma}$ of the $*-$canonical module is the multigraded component
$V_{C, \vec{p}}((2, 2), \ldots, (2,2), 6),$ hence we get the following corollary by the same argument. 

\begin{corollary}
The projective coordinate algebra $R_{C, \vec{p}}((1, 1), \ldots, (1, 1), 3)$ is Gorenstein with $*-$canonical
generator in degree $2.$
\end{corollary}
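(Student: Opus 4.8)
The plan is to deduce the corollary from the immediately preceding theorem and its proof, which established that $V_{C, \vec{p}}(SL_3(\C))$ is Gorenstein with $*$-canonical generator $\omega_\Gamma$ equal to the multigraded component $V_{C, \vec{p}}((2,2), \ldots, (2,2), 6)$. The essential observation is that the projective coordinate algebra $R_{C, \vec{p}}((1,1), \ldots, (1,1), 3)$ is precisely the Veronese-type subalgebra of $V_{C, \vec{p}}(SL_3(\C))$ spanned by the components $V_{C, \vec{p}}(N\cdot(1,1), \ldots, N\cdot(1,1), 3N)$ as $N$ ranges over the non-negative integers, by the very definition of the projective coordinate ring attached to $\mathcal{L}((1,1), \ldots, (1,1), 3)$. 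On the polyhedral side, this subalgebra corresponds to the semigroup algebra $\C[Q]$, where $Q$ is the $\partial$-fiber of $CB_\Gamma^*$ over $((1,1), \ldots, (1,1))$ at level $3$, so that $N \circ Q$ is the fiber over $(N\cdot(1,1), \ldots)$ at level $3N$.

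First I would record that, by Theorem \ref{main} and Corollary \ref{mainweights}, the Gorenstein property for $R_{C, \vec{p}}((1,1), \ldots, (1,1), 3)$ reduces to the Gorenstein property for the semigroup algebra $\C[Q]$, since the Hilbert function is preserved under flat degeneration and Stanley's criterion depends only on that data. Next I would invoke Lemma \ref{intersect} in exactly the form used for $BZ_3^*$: the polytope $Q$ is obtained from $BZ_\Gamma^*$ (equivalently $CB_\Gamma^*$) by intersecting with the additional linear subspace that fixes each leaf weight to $(1,1)$ and pins the level to $3$. The point $\omega_\Gamma$ is the interior generator for the ambient Gorenstein polytope, so the key is to check that $\omega_\Gamma$ still lies in the appropriate Minkowski multiple of $Q$ and remains in the relative interior of $Q$. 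Since $\omega_\Gamma$ records leaf weights $(2,2) = 2\cdot(1,1)$ and level $6 = 2 \cdot 3$, it sits in $2 \circ Q$, and Lemma \ref{intersect} then gives that $\omega_\Gamma$ is the $*$-canonical generator of $\C[Q]$.

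The only remaining point is to read off the degree: in the grading on $R_{C, \vec{p}}((1,1), \ldots, (1,1), 3)$ by the Veronese parameter $N$, the generator $\omega_\Gamma$ lying in $2 \circ Q$ has degree $2$, which is the claimed $a$-invariant. I would phrase this as the final sentence, noting that ``by the same argument'' in the passage refers precisely to applying Lemma \ref{intersect} with $\mathcal{H}$ the weight-fixing subspace.

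The step I expect to be the main obstacle is verifying that $\omega_\Gamma$ genuinely lies in the relative interior of $Q$, i.e.\ that cutting $BZ_\Gamma^*$ down to the fixed-weight slice does not push $\omega_\Gamma$ onto a new facet. Lemma \ref{intersect} is stated with the hypothesis ``$\omega \in a \circ Q$'' built in, so formally one must confirm that $\omega_\Gamma$, the interior point $V_{C, \vec{p}}((2,2), \ldots, (2,2), 6)$, actually belongs to $2 \circ Q$ and not merely to $2 \circ BZ_\Gamma^*$; concretely this means checking that the fiber-product construction of $CB_\Gamma^*$ admits a BZ-triangle network at each trinode whose leaf data are exactly $(2,2)$ and whose level is $6$, lying strictly interior to all the trinode-local cones simultaneously. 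Once this local interiority at each copy of $BZ_3^*$ is confirmed, the facet description of $Q$ as intersections of facets of $BZ_\Gamma^*$ with $\mathcal{H}$ (exactly as in the proof of Lemma \ref{intersect}) makes interiority in $Q$ automatic, and the corollary follows.
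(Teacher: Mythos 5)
Your proposal is correct and takes essentially the same approach as the paper: the paper's terse justification (``hence we get the following corollary by the same argument'') is exactly your chain of reductions --- identify $R_{C, \vec{p}}((1,1), \ldots, (1,1), 3)$ with the Veronese-type subalgebra corresponding to the weight-and-level slice of the ambient semigroup, apply Lemma \ref{intersect} to that slice, and observe that $\omega_{\Gamma} = V_{C, \vec{p}}((2,2), \ldots, (2,2), 6)$ lies in the slice at degree $2$ since $(2,2) = 2\cdot(1,1)$ and $6 = 2\cdot 3$. The interiority worry you raise at the end is precisely what the proof of Lemma \ref{intersect} dispatches (facets of the slice are intersections of ambient facets with the subspace), so your argument is complete as written.
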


\section{The case $g = 0$}\label{g=0}

In this section we prove Theorem \ref{g0}. 
The semigroup $CB_{\Gamma}^*$ is highly dependent on the topology of the graph $\Gamma,$ however when the graph $\Gamma$ has genus $0,$ the situation simplifies considerably. From now on let $\tree$ denote a trivalent tree with $n$ leaves.  In both propositions below the active ingredient is that properties of $CB_3^*$ are preserved by fiber product over one of the three components $\partial_1, \partial_2, \partial_3$ of $\partial,$ because the values of these components always take the values $(0,1)$ ,$(1, 0),$ or $(0,0)$ on a point of $CB_3^*$.

\begin{proposition}
The polytope $CB_{\tree}^*$ is normal. 
\end{proposition}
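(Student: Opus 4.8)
The goal is to show that $CB_{\tree}^*$ is a normal polytope, i.e., that every lattice point of the $k$-th Minkowski sum $k \circ CB_{\tree}^*$ is a sum of $k$ lattice points of $CB_{\tree}^*$. The plan is to proceed by induction on the tree $\tree$, peeling off one trinode at a time and exploiting the fiber-product structure $CB_{\tree}^* = CB_{\tree'}^* \times_{\Z_{\geq 0}^2} CB_3^*$ over a single gluing edge. The base case is a single trinode, where $CB_{\tree}^* = CB_3^*$ is itself normal (it is a facet-intersection of a dilated simplex, as recorded in the Gorenstein lemmas above, and such slices of a unimodular simplex remain normal). For the inductive step I would assume $CB_{\tree'}^*$ is normal and transfer normality across the fiber product, and this transfer is precisely where the special structure of $\partial$ is used.

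\emph{The key step: normality is preserved by $\partial$-fiber product with $CB_3^*$.} Suppose $w \in k \circ (S \times_{\Z_{\geq 0}^2} CB_3^*)$, where $S = CB_{\tree'}^*$. Write $w = (w_1, w_2)$ with $w_1 \in k \circ S$, $w_2 \in k \circ CB_3^*$, and $\partial(w_1) = \partial(w_2)$ on the shared edge. By the inductive hypothesis $w_1 = \sum_{i=1}^k u_i$ with $u_i \in S$ lattice points; by normality of $CB_3^*$, $w_2 = \sum_{i=1}^k v_i$ with $v_i \in CB_3^*$. The difficulty is that these two decompositions need not be compatible: we need a \emph{common} refinement into $k$ pieces whose $\partial$-values agree edge by edge. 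This is exactly the content I would borrow from \cite{MZ}, Theorem $1.2$: because the gluing component $\partial_j$ of $\partial$ on $CB_3^*$ only ever takes the values $(1,0)$, $(0,1)$, or $(0,0)$, any prescribed factorization of the boundary weight $\partial(w)$ can be lifted to a factorization of the whole element of the fiber product. Concretely, I would first decompose $w_1 = \sum u_i$ as granted by induction; this induces a factorization $\partial(w_1) = \sum \partial(u_i)$ of the boundary weight, with each $\partial(u_i) \in \{(0,0),(1,0),(0,1)\}$ (or more generally a lattice weight whose summands I can further split using the three-valued structure). Then I would \emph{use the lifting property of $CB_3^*$} to realize this same ordered factorization of the boundary weight by lattice points $v_i \in CB_3^*$ summing to $w_2$, so that $\partial(u_i) = \partial(v_i)$ for every $i$. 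The pairs $(u_i, v_i)$ are then lattice points of $S \times_{\Z_{\geq 0}^2} CB_3^*$ summing to $w$, giving normality.

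\emph{Main obstacle.} The crux is establishing the compatible lifting in the previous paragraph, and in particular verifying that the factorization $w_2 = \sum v_i$ of the $CB_3^*$-factor can be \emph{forced} to match the boundary weights $\partial(u_i)$ coming from the tree side, rather than just producing some arbitrary decomposition. This is where I expect the real work to lie: one must show that for any element of $k \circ CB_3^*$ and any admissible ordered splitting of its $\partial$-image into $k$ weights from $\{(0,0),(1,0),(0,1)\}$ (in each coordinate), there is a lattice-point decomposition inducing exactly that splitting. The argument for this is effectively the one in \cite{MZ}: the generators $X, S, T, P_{ij}$ of $CB_3^*$ project under $\partial$ onto all the allowed three-valued boundary data, and the single defining binomial relation $P_{12}P_{23}P_{31} = P_{21}P_{32}P_{13}$ is $\partial$-preserving, so one can always re-route a decomposition to achieve the target boundary weights without leaving the semigroup. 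Granting that lifting statement, the induction closes immediately and $CB_{\tree}^*$ is normal.
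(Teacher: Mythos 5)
Your proposal is correct and takes essentially the same route as the paper: both reduce normality to the fiber-product structure $CB_{\tree}^* = CB_{\tree'}^* \times_{\Z_{\geq 0}^2} CB_3^*$ and invoke the factorization-extension argument of \cite{MZ}, Theorem 1.2, which works precisely because the gluing component of $\partial$ only takes the values $(0,0)$, $(1,0)$, $(0,1)$ on $CB_3^*$. The paper states only this gist, while you spell out the induction on trinodes explicitly, but the underlying idea is identical.
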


\begin{proof}
This is a consequence of the argument used in \cite{MZ}, Theorem $1.2.$
\end{proof}

The restriction of any element  $\omega \in CB_{\tree}^*$ to a trinode is a member of $CB_3^*.$   This allows us to describe the lattice points in $CB_{\tree}^*$ as follows. First, we define an element of $CB_{\tree}^*$ to be $complete$ if its associated network on $\tree$ assigns every edge an arrow. 

\begin{figure}[htbp]
\centering
\includegraphics[scale = 0.65]{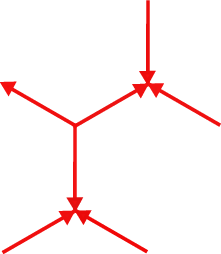}
\caption{A complete weighting on a tree.}
\label{fig:complete}
\end{figure} 

Note that there are exactly two complete weightings of any tree $\tree,$ determined
by the direction of the arrow on any leaf edge.   Recall that a disjoint union of trees is called a forest.  We say a forest $\mathcal{F} \subset \tree$ is $proper$ if it has at least two leaves, and every leaf of $\mathcal{F}$ is a leaf of $\tree.$  The following lemma is immediate.

\begin{figure}[htbp]
\centering
\includegraphics[scale = 0.65]{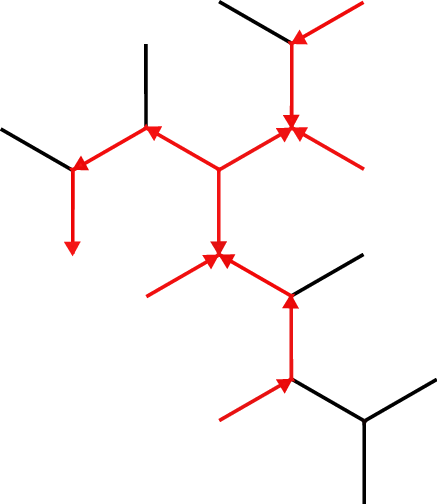}
\caption{A weighting on a subtree.}
\label{fig:proper}
\end{figure}

\begin{lemma}\label{forest}
Lattice points $\omega \in CB_{\tree}^*$ are in bijection with complete weightings on the components of proper forests in $\tree.$ 
\end{lemma}
 
The fact that any component of a proper forest in $\tree$ can be given two complete weightings, coupled with Lemma \ref{forest} give a complete combinatorial description of the generators of $CB_{\tree}^*,$   there are $3^{n-1}$ such weightings. Next we describe the binomial ideal which presents the affine semigroup algebra associated to $\C[CB_{\tree}^*].$ 

\begin{proposition}
The ideal $I_{\tree}$ which presents the toric algebra $\C[CB_{\tree}^*]$ by its degree $1$ component is generated by forms of degree $2$ and $3.$
\end{proposition}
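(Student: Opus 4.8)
The plan is to prove the statement by giving an explicit combinatorial description of the generators of $\C[CB_{\tree}^*]$ and then showing that any binomial relation among them can be factored through relations of degree $2$ and $3$. By Lemma \ref{forest}, the lattice points of $CB_{\tree}^*$ are in bijection with complete weightings on proper forests $\mathcal{F} \subset \tree$, and the degree-$1$ generators are exactly these; the grading is by the number of trinodes touched, or equivalently by the $\partial$-level. Since the preceding Proposition establishes that $CB_{\tree}^*$ is normal, the algebra is generated in degree $1$, so the presentation ideal $I_{\tree}$ is generated by binomial relations $\prod g_i - \prod h_j$ where both sides are monomials in degree-$1$ generators mapping to the same lattice point of $k \circ CB_{\tree}^*$.

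First I would set up the local picture. At a single trinode the only relation is the degree-$3$ binomial $P_{12}P_{23}P_{31} = P_{21}P_{32}P_{13}$ defining $CB_3^*$ (together with the generator $X$), so all syzygies among the eight generators of $\C[CB_3^*]$ are generated by this single cubic. The strategy, following the argument for Theorem $1.2$ of \cite{MZ}, is to reduce a global relation on $\tree$ to these local relations by propagating rewrites across edges one trinode at a time. The key structural fact I would exploit is the one flagged in the introduction to this section: each component $\partial_i$ of the gluing map takes values only in $\{(0,0),(1,0),(0,1)\}$, so the fiber product $S \times_{\Z_{\geq 0}^2} CB_3^*$ has the lifting property that any factorization of the image $w \in S$ extends to a factorization of the preimage $w'$. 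This lets me peel off one trinode at a time while controlling how factorizations on the rest of the tree must match up along the shared edge.

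The key steps, in order, would be: (i) express an arbitrary binomial relation of degree $k$ as an equality of two complete-weighting decompositions of the same element of $k \circ CB_{\tree}^*$; (ii) choose a leaf trinode $v$ and use the fiber-product lifting property to align the two sides along the edge connecting $v$ to the rest of $\tree$, so that after applying local relations at $v$ the two decompositions agree on $v$; (iii) the local relations needed at $v$ are precisely the cubic relation of $CB_3^*$ (degree $3$) and the quadratic relations that exchange the two complete weightings of a forest component (degree $2$, coming from the "two complete weightings" observation preceding the Proposition); (iv) induct on the number of trinodes, since after matching at $v$ the problem reduces to a relation on the smaller tree $\tree \setminus v$. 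The degree-$2$ relations arise because switching the orientation of a leaf edge, or equivalently swapping between the two complete weightings of a single tree component, is a rank-one move; the degree-$3$ relations are the images of the defining cubic of $CB_3$ pushed through the gluing.

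The main obstacle will be step (ii): verifying that the local exchanges at $v$ can always be chosen so that they do not disturb the already-matched part of the decomposition on $\tree \setminus v$, i.e.\ that the induction is genuinely well-founded and the rewrites are confluent. Concretely, when I apply the cubic relation at $v$ to bring the two sides into agreement on $v$, I must ensure the induced change on the shared edge is compatible with a corresponding degree-$2$ or degree-$3$ adjustment on the other side, rather than cascading into higher-degree moves. This is exactly where the restriction $\partial_i \in \{(0,0),(1,0),(0,1)\}$ does the work, since it forces the edge data to be essentially binary and hence controllable by rank-one and the single cubic move; making this precise is the technical heart of the argument, and it is what the cited argument of \cite{MZ}, Theorem $1.2$, is designed to supply.
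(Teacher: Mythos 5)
Your proposal takes essentially the same approach as the paper: the paper proves this proposition simply by invoking the argument of Theorem 1.2 of \cite{MZ}, i.e.\ the fiber-product lifting property keyed to the fact that each $\partial_i$ takes only the values $(0,0)$, $(1,0)$, $(0,1)$ on $CB_3^*$, with quadrics arising from the gluing along edges and cubics from the local relation of $CB_3^*$. Since the paper offers no further detail beyond that citation, your sketch (peeling off trinodes, matching factorizations along the shared edge, inducting on the tree) is a faithful and in fact more explicit account of the same strategy, with the acknowledged confluence issue being precisely what the cited argument of \cite{MZ} is meant to supply.
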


\begin{proof}
This is also a consequence of the argument for Theorem $1.2$ in \cite{MZ}.
\end{proof}

Theorem \ref{g0} then follows by a flat degeneration argument.  In a reduced family of graded algebras, the generic maximum degrees of required generators and relations are bounded by the degrees at a closed point.  For an application of this argument over the stack $\bar{\mathcal{M}}_{g, n}$, see \cite{M7}.

\section{The case $g > 0$}\label{g=1}

In this section we show that the degree of generation of $V_{C, \vec{p}}(SL_3(\C))$ 
for $(C, \vec{p}) \in \bar{\mathcal{M}}_{g, n}$ is essentially controlled by the behavior of a specific polytope.   The wealth of polytopes $BZ^*_{\Gamma}$ at our disposal allows us the advantage of picking one with favorable properties.  We let $BZ^*_{ g, n}$ be the polytope obtained by fiber-product, dual to the graph $\Gamma(g, n)$ depicted in Figure \ref{fig:gn}.

\begin{figure}[htbp]
\centering
\includegraphics[scale = 0.55]{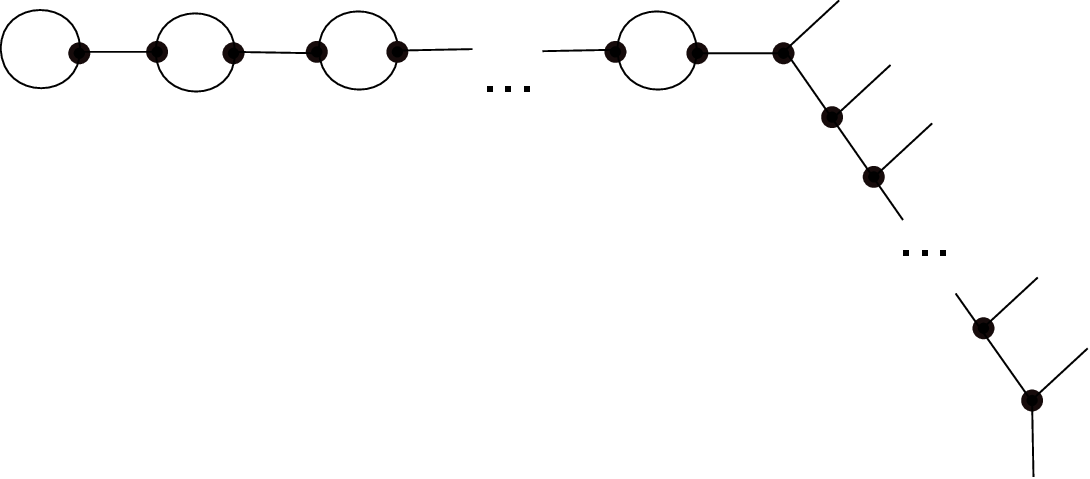}
\caption{The graph $\Gamma(g, n)$, which has $g$ loops and $n$ edges.}
\label{fig:gn}
\end{figure}

For $g \geq 1$, the polytope $BZ^*_{g, n}$ itself is a fiber product of of the polytope $BZ_{\tree_0}^*$ corresponding to the tree $\tree_0$ and the polytope $BZ^*_{g, 1}.$
As a result, we can control the behavior of $BZ^*_{g,n}$ to a certain degree. 

\begin{proposition}\label{bound}
The semigroup algebra $\C[BZ^*_{g, n}]$ is generated in degree bounded by the degree necessary to generate $\C[BZ_{g, 1}^*].$
\end{proposition}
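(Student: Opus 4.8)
The plan is to exploit the fiber-product structure of $BZ^*_{g,n}$ over the graph $\Gamma(g,n)$ to reduce the degree-of-generation question from the full polytope to the ``core'' piece $BZ^*_{g,1}$. The key observation, stated just before the proposition, is that $BZ^*_{g,n}$ decomposes as a fiber product of $BZ^*_{\tree_0}$ (for the tree $\tree_0$ carrying the $n$ leaf edges) and $BZ^*_{g,1}$, glued along a single edge. Since the tree factor $BZ^*_{\tree_0}$ is already under control by the genus-$0$ analysis, any element of $BZ^*_{g,n}$ should factor into a product of generators, all but one of which come from the tree, provided we can always ``complete'' a partial factorization across the gluing edge.

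\textbf{Key steps.} First I would recall the description of lattice points of $CB^*_{\tree}$ (equivalently $BZ^*_{\tree}$) as complete weightings on proper forests, via Lemma \ref{forest}, and note that the tree factor is normal and generated in degree $1$ with relations in degrees $2$ and $3$. Second, I would set up the fiber product $BZ^*_{g,n} = BZ^*_{\tree_0} \times_{\Z_{\geq 0}^2} BZ^*_{g,1}$, where the fiber is taken over the $\partial$-value on the common gluing edge, recording that this value always lies in $\{(0,0),(1,0),(0,1)\}$ on any point of $CB_3^*$; this is the ``active ingredient'' the text repeatedly invokes. Third, given an arbitrary lattice point $w \in BZ^*_{g,n}$ of degree $N$, I would take its restriction $w_1$ to the $BZ^*_{g,1}$ part and its restriction $w_2$ to the tree part, with matching $\partial$-values on the shared edge. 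By the generation hypothesis for $\C[BZ^*_{g,1}]$, write $w_1$ as a product of generators of $BZ^*_{g,1}$ of degree at most $d := \deg_{\mathrm{gen}}\C[BZ^*_{g,1}]$. The crucial lifting step is then to extend each factor of $w_1$ back across the gluing edge: because $w_2$ lives on a normal, degree-$1$-generated tree polytope and because the gluing-edge $\partial$-values are constrained to simple weights, any factorization of the projection onto $S = BZ^*_{g,1}$ can be matched by a compatible factorization of the tree piece, exactly as in the argument for Theorem $1.2$ of \cite{MZ}. Assembling these compatible factors produces a factorization of $w$ into generators of $BZ^*_{g,n}$, each of degree at most $d$.

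\textbf{Main obstacle.} The heart of the matter — and where I expect the real work to lie — is the lifting/extension step: showing that a factorization of the $S$-image of $w'$ lifts to a genuine factorization of $w'$ inside the fiber product. This is precisely the content of the ``gist'' described before the normality proposition, namely that any factorization of $w \in S$ which is the image of $w' \in S \times_{\Z_{\geq 0}^2} CB_3^*$ can be extended to a factorization of $w'$. The reason this works is that over each fixed simple weight the fiber in $CB_3^*$ is a well-behaved (normal, degree-$1$-generated) object whose own factorizations can be made to track those prescribed on the $S$-side, so the two sides can be factored in lockstep along the gluing edge. Once this matching is established for the single gluing edge of $\Gamma(g,n)$, iterating it over the tree portion $\tree_0$ is routine, and the degree bound follows because the tree contributes only degree-$1$ generators while the genus part contributes generators of degree at most $d$.
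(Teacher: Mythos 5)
Your proposal is correct and follows essentially the same route as the paper: split $\Gamma(g,n)$ along the edge separating $\tree_0$ from $\Gamma(g,1)$, restrict a lattice point $w$ to the two pieces, factor $w|_{\Gamma(g,1)}$ using the generation hypothesis for $\C[BZ^*_{g,1}]$, and extend that factorization across the gluing edge to all of $w$ via the argument of Theorem $1.2$ of \cite{MZ}. The paper's proof is just a terse statement of this; your elaboration of why the extension step works (normality and degree-$1$ generation of the tree factor, gluing values constrained to $\{(0,0),(1,0),(0,1)\}$) is exactly the content being invoked.
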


\begin{proof}
We split the graph $\Gamma(g, n)$ at the edge $e$ which separates the tree $\tree_0$ component from the rest of the graph. 

\begin{equation}
\Gamma(g, n) = \Gamma(g, 1) \cup_e \tree_0\\
\end{equation}

\noindent
Consider an element $w \in L\circ BZ_{g, n}^*,$ and its restrictions $w|_{\tree_0}$ and $w|_{\Gamma(g, 1)}.$  We  once again use the argument from \cite{MZ}, Theorem $1.2$  to 
 show that any factorization of $w|_{\Gamma(g, 1)}$ can be extended to a factorization of all of $w.$ 
\end{proof}

Using this proposition, we can prove Theorem \ref{g1} with the following lemma.
The semigroup $BZ_{1, 1}^*$ is composed of those triangles which are dual to the graph composed of a single loop with an edge.  These are triangles such that the boundary values for two chosen edges are dual.  

\begin{lemma}
The semigroup algebra $\C[BZ^*_{1, 1}]$ is generated by elements of level $1, 2,$ and $3.$ \end{lemma}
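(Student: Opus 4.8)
The semigroup $BZ_{1,1}^*$ consists of those $BZ$-triangles (with the extra quantum generator $X$ adjoined, as in $BZ_3^*$) that are dual to the single-loop-plus-edge graph $\Gamma(1,1)$; concretely, these are elements of $BZ_3^*$ whose two ``loop'' boundary weights are required to be dual to each other, together with one free leaf weight. My plan is to reduce the statement to an explicit finite computation by exploiting the rigid structure of $CB_3^* \cong BZ_3^*$ established earlier. First I would realize $\C[BZ_{1,1}^*]$ as the torus-invariant subalgebra $(\C[BZ_3^*])^{T}$, where $T = T \times \C^*$ is the subtorus forcing the two identified edge-weights to be dual; equivalently, $BZ_{1,1}^*$ is the fiber product of $BZ_3^*$ with itself along the duality pairing $\partial_i(\omega) = \partial_j(\omega)^*$ on the loop edges. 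Because the values of each component $\partial_k$ on a point of $BZ_3^*$ lie in $\{(0,0),(1,0),(0,1)\}$, the fiber product over the loop is a genuinely finite combinatorial gadget, and I can enumerate its generators directly.

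The key steps, in order, are as follows. Step one: parametrize the lattice points of $BZ_{1,1}^*$ by the eight generators $X, S, T, P_{ij}$ of $BZ_3^*$, imposing the constraint that the two loop-edge weights be dual. This self-gluing identifies, for instance, $S$ with $T$ and $P_{ij}$ with $P_{ji}$ on the loop, so the ``complete weightings'' of the loop come in a short list. Step two: compute the level (the $\C^*$-grading coming from the adjoined $X$, i.e.\ the $v_\theta$-level) of each such glued generator. Because each local $BZ_3^*$ generator has level $1$ by the Lemma asserting $v_\theta(P_{ij}) = v_\theta(S) = v_\theta(T) = 1$, an element obtained by gluing two local pieces along the loop naturally picks up level up to $2$, and the loop identification can force an additional copy of the relation-balancing element (of level $3$) to appear as an indecomposable generator. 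Step three: verify by a direct subduction/factorization argument that every lattice point of $L \circ BZ_{1,1}^*$ for $L \geq 4$ decomposes as a sum of these generators of level $\leq 3$; here I would use exactly the argument of Theorem~$1.2$ of \cite{MZ}, namely that any factorization of the restriction to the leaf edge extends over the loop, so the only genuinely new indecomposables come from the loop itself.

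The main obstacle, as I see it, is Step two together with Step three: pinning down precisely which loop configurations fail to factor into level-$1$ and level-$2$ pieces and hence force the level-$3$ generator. The $SL_2$ analogue (the papers \cite{Bu}, \cite{BBKM}) shows that a genus-$1$ graph raises the degree of generation to $g+1 = 2$, and the appearance of level $3$ here rather than level $2$ reflects the ``cubic'' nature of the $BZ_3^*$ relation $P_{12}P_{23}P_{31} = P_{21}P_{32}P_{13}$ (versus the quadratic $SL_2$ relation). I expect the crux to be exhibiting an explicit element of $3 \circ BZ_{1,1}^*$ — the loop-closure of the cubic relation word — that is indecomposable in the self-glued semigroup, thereby showing level $3$ is genuinely needed, while simultaneously proving that nothing of level $\geq 4$ can be indecomposable. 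Once the finite list of indecomposables is shown to be closed under the relation and to span all higher Minkowski sums, the generation statement follows, and combined with Proposition~\ref{bound} this yields Theorem~\ref{g1}.
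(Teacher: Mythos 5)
Your overall strategy --- enumerate the indecomposables of the self-glued semigroup and check that none occur above level $3$ --- is the right shape, and it is essentially what the paper does (its proof consists of exhibiting a finite list of generators in Figure \ref{fig:11} and leaving the verification to the reader). But two of your concrete claims are wrong, and they sit exactly where the content of the proof lives. First, the structural framing: $\Gamma(1,1)$ has a \emph{single} trinode, so $BZ_{1,1}^*$ is not a fiber product of $BZ_3^*$ with itself, and no element of it is obtained by ``gluing two local pieces along the loop''; it is the sub-semigroup of one copy of $BZ_3^*$ consisting of elements $\omega$ with $\partial_1(\omega) = \partial_2(\omega)^*$ (for the two chosen loop edges). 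Your Step-two heuristic --- two glued level-$1$ pieces give level $2$, and the loop forces a level-$3$ ``relation-balancing'' element --- therefore has no basis. Second, and more seriously, your proposed level-$3$ indecomposable, the loop-closure of the relation word $P_{12}P_{23}P_{31}$, in fact decomposes: $P_{12}$ satisfies the duality condition ($\partial_1 = (1,0)$, $\partial_2 = (0,1)$), and so does $P_{23}P_{31}$ ($\partial_1 = (0,1)$, $\partial_2 = (1,0)$), so $P_{12}P_{23}P_{31} = P_{12}\cdot(P_{23}P_{31})$ is a product of elements of $BZ_{1,1}^*$ of levels $1$ and $2$. The genuine level-$3$ indecomposables are elements such as $SP_{31}P_{32}$ and $TP_{13}P_{23}$: there the full product satisfies the duality condition ($\partial_1 = \partial_2 = (1,1)$), no proper sub-monomial does, and the relation $XST = P_{12}P_{23}P_{31}$ cannot be applied to produce an alternative factorization. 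A proof organized around your guess would either miss these or wrongly conclude that level $2$ suffices.

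There is a second, smaller gap: your Step three invokes the factorization-extension argument of Theorem $1.2$ of \cite{MZ}, but that argument extends factorizations across a fiber product along an edge; it is what proves Proposition \ref{bound} (splitting $\Gamma(g,n)$ into a tree and $\Gamma(g,1)$), and inside the single trinode of $\Gamma(1,1)$ there is no edge to split along, so it gives nothing here. What is actually required --- and what the paper leaves to the reader --- is the direct finite check: the level-$1$ elements of $BZ_{1,1}^*$ are $X, P_{12}, P_{21}$; the level-$2$ indecomposables are $ST$, $P_{23}P_{31}$, $P_{32}P_{13}$; the level-$3$ indecomposables are those noted above; and every monomial $X^aS^bT^c\prod P_{ij}^{e_{ij}}$ of level at least $4$ satisfying the duality condition splits off one of these generators with complementary factor again in $BZ_{1,1}^*$. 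None of that case analysis appears in your proposal. Note also that the lemma only asserts an upper bound on the levels of generators, so proving that level $3$ is genuinely needed, while true, is not part of what must be shown.
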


\begin{proof}
We leave it to the reader to verify that the  elements depicted in Figure \ref{fig:11} suffice to generate $BZ_{1, 1}^*.$

\begin{figure}[htbp]
\centering
\includegraphics[scale = 0.35]{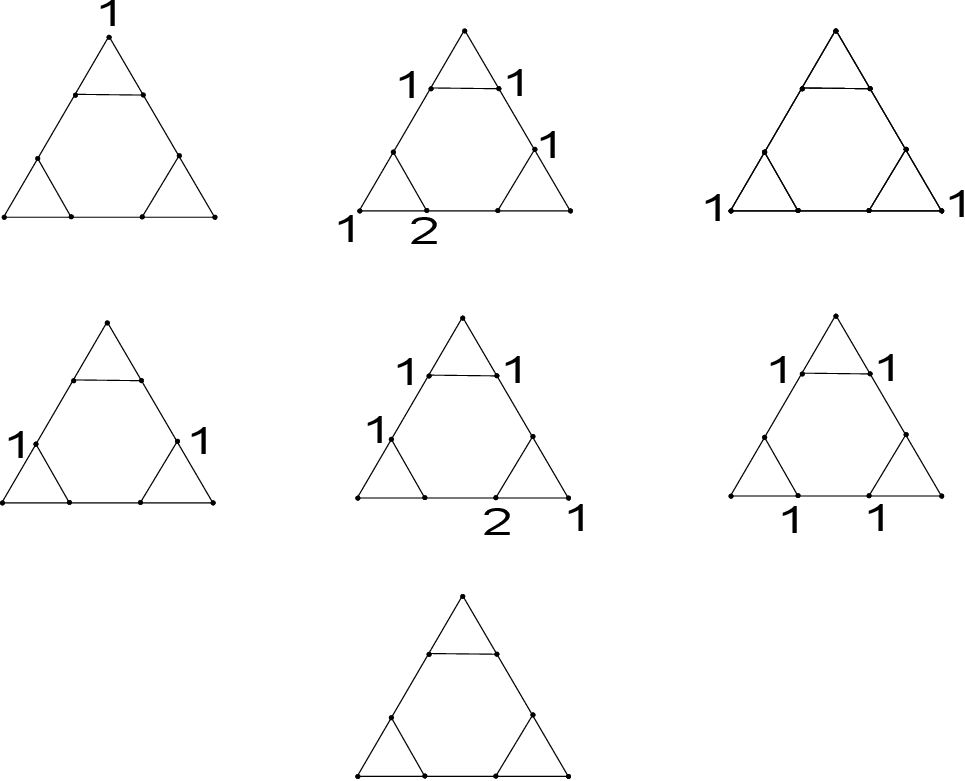}
\caption{Generators of $BZ_{1, 1}^*$.}
\label{fig:11}
\end{figure}

\end{proof}

\date{\today}


\begin{thebibliography}{5}\label{biblio}

\bibitem[JSST]{JSST}
 A. \ Jensen, D. \ Speyer, B. \ Sturmfels, R. \ Thomas, \emph{Computing tropical varieties}, Journal of Symbolic Computation , v. 42,  {\bf 1-2} (2007),   54-73.


\bibitem[A]{A}
T. Abe, \emph{Projective normality of the moduli space of rank 2 vector bundles on a generic curve},Transactions AMS, v. 362, (2010), 477-490.





\bibitem[Be]{Be}
A. Beauville, \emph{Conformal Blocks, Fusion Rules, and the Verlinde formula}
Proceedings of the Hirzebruch 65 Conference on Algebraic Geometry, (1996), 75–96, Israel Math. Conf. Proc., {\bf 9}, Bar-Ilan Univ., Ramat Gan.


\bibitem[BH]{BH}
W. \ Bruns and J. \ Herzog, \emph{Cohen Macaulay Rings},
Cambridge Studies in Advanced Mathematics {\bf 39} (1993).


\bibitem[BL]{BL}
A. Beauville, Y. Laszlo, \emph{Conformal Blocks and Generalized Theta Functions}
Math. Phys. {\bf 164} (1994), 385-419.


\bibitem[BLS]{BLS}
A. Beauville, Y. Laszlo, C. Sorger, \emph{The Picard Group of the moduli of Gbundles over curves}, Compositio Math. {\bf 112}, 2 (1998), 183–216.


\bibitem[BZ1]{BZ1}
A, \ Berenstein and A. Zelevinksy, \emph{Triple Multiplicities for $sl(r+1)$ and the Spectrum of the Algebra of the Adjoint Representation}, Journal of Algebraic Combinatorics {\bf 1} (1992), 7-22

\bibitem[BZ2]{BZ2}
A. \ Berenstein and A. \ Zelevinsky, \emph{Canonical bases for the quantum group of type $A_r$ and piecewise-linear combinatorics}, Duke Math. J. {\bf 82} (1996), 473  

\bibitem[Bu]{Bu}
V. Buczynska, \emph{Toric models of graphs.}, http://arxiv.org/abs/1004.1183.

\bibitem[BBKM]{BBKM}
W.\ Buczynska, J.\ Buczynski, K.\ Kubjas, and M.\ Michalek, \emph{Degrees of generators of phylogenetic semigroups on graphs}, arXiv:1105.5382v1 [math.CO].

\bibitem[BW]{BW} W. \ Buczynska and J. \ Wiesniewski, \emph{On the
    geometry of binary symmetric models of phylogenetic trees}, J.
  European Math. Soc. {\bf 9} (2007), 609-635.

\bibitem[Fak]{Fak}
N.\ Fakhruddin, \emph{Chern classes of conformal blocks on $\bar{\mathcal{M}}_{0,n}$}, 2009. arXiv:0907.0924v2 [math.AG].

\bibitem[Fal]{Fal}
G. Faltings, \emph{A proof of the Verlinde formula}
J. Algebraic Geometry {\bf 3} (1994), 347-374.

\bibitem[FrBZ]{FrBZ}
Frenkel, E., Ben-Zvi, D, \emph{Vertex Algebras and Algebraic Curves}, 
Mathematical Surveys and Monographs {\bf 88}, AMS, Providence, Rhode Island, 2001.

\bibitem[GG]{GG}
A. Gibney, N. Giansiracusa, \emph{The cone of type A, level one conformal blocks divisors}, arXiv:1105.3139v2 [math.AG].


\bibitem[Gr]{Gr}
F.D. \ Grosshans, \emph{Algebraic homogeneous spaces and invariant theory}, Springer Lecture Notes, vol. 1673, Springer, Berlin, (1997).

\bibitem[HTW]{HTW}
Roger Howe, Eng-Chye Tan, and Jeb Willenbring, 
\emph{A Basis for the GLn Tensor Product Algebra}, Adv. Math., v. 196,
{\bf 2}, (2005), 531-564.


\bibitem[Ka]{Ka}
K.\ Kaveh, \emph{Crystal bases and Newton-Okounkov bodies}, 	arXiv:1101.1687v1 [math.AG].

\bibitem[KH]{KH}
K. \ Kaveh, M. \ Harada, \emph{Integrable systems, toric degenerations and Okounkov bodies}, http://arxiv.org/abs/1205.5249


\bibitem[KMSW]{KMSW}
A.N. \ Kirillov, P. \ Mathieu, D. \ Senechal, and M.A. \ Walton, \emph{Crystalizing the depth rule and WZNW fusion coefficients}, Proceedings of the XIXth International Colloquium on Group Theoretical Physics, Salamanca, Spain, (1992).

\bibitem[Ko]{Ko}
T. Kohno, \emph{Conformal Field Theory and Topology}, 
Translations of Mathematical Monographs, vol. 210, 
AMS, Providence, Rhode Island, (2002).

\bibitem[KNR]{KNR}
S. Kumar, M.S. Narasimhan, A. Ramanathan, 
\emph{ Infinite Grassmannians and moduli spaces of G-bundles}
Math. Annalen {\bf 300}, (1994), 41-75.


\bibitem[KTW]{KTW}
A. Knutson, T. Tao, C. Woodward  \emph{The honeycomb model of $GL_n(\C)$ tensor products II: Puzzles determine facets of the Littlewood-Richardson cone}, Journal of the AMS, 17 (2004), 19-48.


\bibitem[LS]{LS}
Y. Laszlo and C. Sorger, \emph{The line bundles on the moduli of parabolic
G-bundles over curves and their sections}, Ann. Sci. Ecole Norm. Sup.
(4) {\bf 30} (1997), 499-525.

\bibitem[L] {L}
E. Looijenga, \emph{Conformal blocks revisited},
arXiv:math/0507086v1 [math.AG]

\bibitem[Lu]{Lu}
G. \ Lusztig, \emph{Canonical bases arising from quantized enveloping algebras}, J. Amer. Math. Soc. 4 (1991), 356-421.

\bibitem[M1]{M1}
C. Manon, \emph{Presentations of semigroup algebras of weighted trees},
Journal of Algebraic Combinatorics, Volume 31, {\bf 4}, (2010), 467-489

\bibitem[M4]{M4}
C. \ Manon, \emph{The algebra of conformal blocks}, (2010), arXiv:0910.0577v4 [math.AG]

\bibitem[M5]{M5}
C. \ Manon, \emph{Toric degenerations and tropical geometry of branching algebras} (2011), http://arxiv.org/abs/1103.2484

\bibitem[M7]{M7}
C. \ Manon \emph{Coordinate rings for the moduli of $SL_2(\C)$ quasi-parabolic principal bundles on a curve and toric fiber products}, Journal of Algebra, Volume 365, {\bf 1} (2012), Pages 163-183

\bibitem[MZ]{MZ}
C. \ Manon, Z. Zhou, \emph{Semigroups of $sl_3(\C)$ tensor product invariants}, arXiv:1206.2529 [math.AG]


\bibitem[S]{S}
 C. Sorger, \emph{La formule de Verlinde}, 
Sem Bourbaki, Exp 793, Asterisque 237 (1996), 87-114.

\bibitem[St]{St}
B. \ Sturmfels, \emph{Grobner bases and convex polytopes}, 
Lecture notes series 8, AMS, Providence,
Rhode Island, (1996).

\bibitem[StX]{StXu}
B. \ Sturmfels and Z. \ Xu, \emph{Sagbi Bases of Cox-Nagata Rings}, 
JEMS, Volume 12, {\bf 2}, (2010).


\bibitem[StV]{StV}
B. Sturmfels and M. Velasco, \emph{Blow-ups of $\mathbb{P}^{n-3}$ 
at $n$ points and spinor varieties}, arXiv:0906.5096.[math.AG].

\bibitem[TUY]{TUY}
A. Tsuchiya, K. Ueno, and Y. Yamada, \emph{Conformal field theory on 
universal family of stable curves with gauge symmetries},
Adv. Studies in pure Math. 19 (1989), 459-566.

\bibitem[TW]{TW}
C. \ Teleman and C. \ Woodward, \emph{Parabolic bundles, products of conjugacy classes, and Gromov-Witten invariants}, Annales d L'institut Fourier, 53, {\bf  3}, (2003), 713-748.


\bibitem[U]{U}
K. Ueno, \emph{Introduction to conformal field theory with gauge symmetry}, Fields Institute Monographs (2008); v. 24.


\bibitem[Ver]{Ver}
E. Verlinde, \emph{Fusion rules and modular transformations in 2d conformal
field theory}, Nuclear Physics B300 360-376 (1988).

\bibitem[Zh]{Zh}
D. \ P. \ Zhelobenko, \emph{Compact Lie Groups and Their Representations},  American Mathematical Society, Providence,  (1973).




\end{thebibliography}
\end{document}